\setlist[enumerate]{label={\upshape(\roman*)}}
\newcommand{\floor}[1]{{\lfloor #1 \rfloor}}
\newcommand{\ceil}[1]{{\lceil #1 \rceil}}
\newtheorem{theorem}{Theorem}[section]
\newtheorem{proposition}[theorem]{Proposition}
\newtheorem{lemma}[theorem]{Lemma}
\newtheorem{corollary}[theorem]{Corollary}
\newtheorem{remark}[theorem]{Remark}
\newtheorem{definition}[theorem]{Definition}
\newtheorem{assumption}{Assumption}
\def\nset{\mathbb{N}}
\def\rset{\mathbb{R}}
\def\rmd{\mathrm{d}}
\def\E{\mathbb{E}}
\def\rme{\mathrm{e}}
\def\bR{\mathbb{R}}
\def\R{\mathbb{R}}
\def\calF{\mathcal{F}}
\def\calL{\mathcal{L}}
\DeclareMathOperator{\KL}{KL}
\DeclareMathOperator{\Var}{Var}
\DeclareMathOperator{\Cov}{Cov}
\newcommandx{\CPE}[3][1=]{{\mathbb E}^{#1}\left[\left. #2 \, \right| #3 \right]} 
\begin{document}

\title{Nonasymptotic estimates for Stochastic Gradient Langevin Dynamics under local conditions in nonconvex optimization
\thanks{This work was supported by The Alan Turing Institute for Data Science and AI under EPSRC grant EP/N510129/1. Y. Z. was supported by The Maxwell Institute Graduate School in Analysis and its Applications, a Centre for Doctoral Training funded by the UK Engineering and Physical Sciences Research Council (grant EP/L016508/01), the Scottish Funding Council, Heriot-Watt University and the University of Edinburgh. \"{O}. D. A. is supported by the Lloyd’s Register Foundation Data Centric Engineering Programme and EPSRC Programme Grant EP/R034710/1. T. D. acknowledges support from EPSRC EP/T004134/1, UKRI Turing AI Fellowship EP/V02678X/1, and Lloyd’s Register Foundation programme on Data Centric Engineering through the London Air Quality project.}}

\author[1]{Ying Zhang \thanks{Corresponding author. Email: ying.zhang@ntu.edu.sg}}
\author[2]{\"{O}mer Deniz Akyildiz \thanks{Email: deniz.akyildiz@imperial.ac.uk}}
\author[3,4]{Theodoros Damoulas \thanks{Email: t.damoulas@warwick.ac.uk}}
\author[3,5,6]{Sotirios Sabanis \thanks{Email: s.sabanis@ed.ac.uk}}

\affil[1]{\footnotesize Nanyang Technological University, Singapore}
\affil[2]{\footnotesize Imperial College London, UK.}
\affil[3]{\footnotesize The Alan Turing Institute, UK.}
\affil[4]{\footnotesize The University of Warwick, UK.}
\affil[5]{\footnotesize The University of Edinburgh, UK.}
\affil[6]{\footnotesize National Technical University of Athens, Greece.}

\date{}

\maketitle

\begin{abstract}
In this paper, we are concerned with a non-asymptotic analysis of sampling algorithms used in nonconvex optimization. In particular, we obtain non-asymptotic estimates in Wasserstein-1 and Wasserstein-2 distances for a popular class of algorithms called Stochastic Gradient Langevin Dynamics (SGLD). In addition, the aforementioned Wasserstein-2 convergence result can be applied to establish a non-asymptotic error bound for the expected excess risk. Crucially, these results are obtained under a local Lipschitz condition and a local dissipativity condition where we remove the uniform dependence in the data stream. We illustrate the importance of this relaxation by presenting examples from variational inference and from index tracking optimization. 
\end{abstract}

\noindent\textbf{Keywords} Stochastic gradient Langevin dynamics $\cdot$ Non-convex optimization $\cdot$ Non-asymptotic estimates $\cdot$ Local Lipschitz continuous $\cdot$ Local dissipativity $\cdot$ Variational inference \\
\noindent\textbf{Mathematics Subject Classification (2020)} 60J20 $\cdot$ 60J22 $\cdot$ 65C05 $\cdot$ 65C40 $\cdot$ 62D05

\section{Introduction}
We consider a nonconvex stochastic optimization problem
\[
\text{minimize} \quad U(\theta) : = \mathbb{E}[f(\theta, X)],
\]
where $\theta \in \mathbb{R}^d$ and $X$ is a random element. We aim to generate an estimate $\hat{\theta}$ such that the expected excess risk $\mathbb{E}[U(\hat{\theta})] - \inf_{\theta \in \mathbb{R}^d} U(\theta)$ is minimized. The optimization problem of minimizing $U$ is closely linked to the problem of sampling from a target distribution which concentrates around the minimizers of $U$. It is, therefore, important to investigate the Langevin dynamics based algorithms and their sampling behaviour in the context of optimization. The latter is the primary focus of this article.

The Langevin SDE is given by
\begin{equation}\label{sdeintro}
\rmd Z_t = -  h(Z_t) \rmd t + \sqrt{2 \beta^{-1}} \rmd B_t, \qquad t> 0,
\end{equation}
with a (possibly random) initial condition $\theta_0$, where $h: = \nabla U$, $\beta>0$, and $(B_t)_{t \ge 0}$ is a $d$-dimensional Brownian motion. Under mild conditions, it is well-known that SDE \eqref{sdeintro} admits as a unique invariant measure $ \pi_{\beta}( \theta) \wasypropto \exp(-\beta U(\theta))$. Moreover, $\pi_{\beta}$ concentrates around the minimizers of $U$ when $\beta$ takes sufficiently large values (see, e.g., \cite{hwang}). 
To sample from $\pi_{\beta}$, a standard approach is to approximate the Langevin SDE \eqref{sdeintro} by using an Euler discretization scheme, which serves as a sampling algorithm and is known as the unadjusted Langevin algorithm (ULA) or Langevin Monte Carlo (LMC). Theoretical guarantees for the convergence of ULA in Wasserstein distance and in total variation have been obtained under the assumption that $U$ is strongly convex with globally Lipschitz gradient \cite{dalalyan,unadjusted,aew}. Extensions which include locally Lipschitz gradient and higher order algorithms can be found in \cite{tula}, \cite{dk} and \cite{hola}.

In practice, however, the gradient $h$ is usually unknown and one only has an unbiased estimate of $h$. A natural extension of ULA, which was introduced in \cite{wt} in the context of Bayesian inference and which has found great applicability in this type of stochastic optimization problems, is the Stochastic Gradient Langevin Dynamics (SGLD) algorithm. More precisely, fix an $\mathbb{R}^d$-valued random variable $\theta_0$ representing its initial value and let $(X_n)_{n \in \mathbb{N}}$ be an i.i.d. sequence, the SGLD algorithm corresponding to SDE \eqref{sdeintro} is given by, for any $n\in\mathbb{N}$,
\begin{equation}\label{eq:discreteTimeSGLDintro}
\theta^{\lambda}_0:=\theta_0,\quad \theta^{\lambda}_{n+1}:=\theta^{\lambda}_n-\lambda H(\theta^{\lambda}_n,X_{n+1})+\sqrt{2\lambda\beta^{-1}}\xi_{n+1},
\end{equation}
where $\lambda >0$ is often called the stepsize or gain of the algorithm, $\beta>0$ is the so-called inverse temperature parameter, $H:\mathbb{R}^d\times\mathbb{R}^m\to\mathbb{R}^d$ is a measurable function 
and $(\xi_n)_{n\in\mathbb{N}}$ is an independent sequence of standard $d$-dimensional Gaussian random variables. The properties of the i.i.d. process $(X_n)_{n \in \mathbb{N}}$ are given below.

For a strongly convex objective function $U$, \cite{convex}, \cite{ppbdm}, \cite{pmlrv65dalalyan17a}, and \cite{dk} obtain non-asymptotic bounds in Wasserstein-2 distance between the SGLD algorithm and the target distribution $\pi_{\beta}$. While \cite{dk} assumes the stochastic gradient $H$ is a linear combination of $h$ and $(X_n)_{n \in \mathbb{N}}$, which allows bounded conditional bias, a general form of $H$ with non-Markovian $(X_n)_{n \in \mathbb{N}}$ is considered in \cite{convex}. For the case where $U$ is nonconvex, 
one line of research is to consider a dissipativity condition. The first such non-asymptotic estimate is provided by \cite{raginsky} in Wasserstein-2 distance although its rate of convergence is $\lambda^{5/4}n$ which depends on the number of iterations $n$. Improved results are obtained in \cite{xu}, by using a direct analysis of the ergodicity of the overdamped Langevin Monte Carlo algorithms. While a faster convergence rate is achieved in \cite{xu} compared to \cite{raginsky}, it is still dependent on $n$. Recently, \cite{nonconvex} obtained a convergence rate 1/2 in Wasserstein-1 distance. 
Its analysis relies on the construction of certain auxiliary continuous processes and the contraction results in \cite{eberle}. Another line of research is to assume a convexity at infinity condition of $U$. \cite{berkeley} and \cite{alex} obtain convergence results in Wasserstein-1 distance by using the contraction property developed in \cite{eberleold}. In both convex and nonconvex settings, the non-asymptotic analysis of the Langevin diffusion can be extended to a wider class of diffusions under certain conditions, see \cite{ems} and references therein.


In this paper, we establish non-asymptotic convergence results in Theorem \ref{main} and Corollary \ref{cw2} for the SGLD algorithm \eqref{eq:discreteTimeSGLDintro} in Wasserstein-1 and Wasserstein-2 distances, respectively. Moreover, by using a similar splitting approach as in \cite{raginsky}, the Wasserstein-2 convergence result can then be applied to establish a nonasymptotic error bound for the expected excess risk, which is provided in Corollary \ref{eer}. These main results are obtained under the relaxed conditions as stated in Assumptions \ref{loclip} and \ref{assum:dissipativity} below. Crucially, we relax substantially the assumptions of dissipativity and Lipschitz continuity on the stochastic gradient $H(\theta, x)$ by allowing non-uniform dependence in $x$. 

To illustrate the applicability of the proposed algorithm under the local assumptions, examples from variational inference (VI) and from index tracking optimization are considered, 
which represent key paradigms in statistical machine learning and financial mathematics. In the VI example, a nonconvex objective function is considered, and it can be shown that its stochastic gradient, denoted by $H(\theta, u)$, satisfies the local dissipativity and local Lipschitz conditions. To the best of the authors' knowledge, this is the first time that non-asymptotic guarantees are provided for a concrete variational inference example due to the local nature of the aforementioned dissipativity and Lipschitz conditions which stem from the lack of a uniform bound in $u$. As for the example from index tracking optimization, the mean squared tracking error is considered as the objective function (see, e.g. \cite{zheng2020index}, \cite{gaivoronski2005optimal}). Reparametrization is performed to remove the constraints on the parameter $\theta$, which results in a nonconvex objective function. In addition, as this example can be viewed as an online regression problem, a uniform bound of the data stream is unavailable. However, one can check that the stochastic gradient, denoted by $H(\theta, z)$, satisfies the local dissipativity and local Lipschitz conditions but not the corresponding global ones.

We conclude this section by introducing some notation. Let $(\Omega,\mathcal{F},P)$ be a probability space. We denote by $\E[X]$  the expectation of a random variable $X$.
For $1\leq p<\infty$, $L^p$ is used to denote the usual space of $p$-integrable real-valued random variables. The $L^p$-integrability of a random variable $X$ is defined as $\mathbb{E}[|X|^p] < \infty$. Fix an integer $d\geq 1$. For an $\mathbb{R}^d$-valued random variable $X$, its law on $\mathcal{B}(\mathbb{R}^d)$ (the Borel sigma-algebra of $\mathbb{R}^d$) is denoted by $\mathcal{L}(X)$. For a positive real number $a$, we denote by $\floor{a}$ its integer part. For a vector $b \in \mathbb{R}^d$, denote by $b^{\mathsf{T}}$ its transpose. Scalar product is denoted by $\langle \cdot,\cdot\rangle$, with $|\cdot|$ standing for the corresponding norm. Let $f:\mathbb{R}^{d} \rightarrow \mathbb{R}$ be a twice continuously differentiable function. Denote by $\nabla f$, $\nabla^2 f$ and $\Delta f$ the gradient of $f$, the Hessian of $f$ and the Laplacian of $f$, respectively. For any integer $q \geq 1$, let $\mathcal{P}(\mathbb{R}^q)$ denote the set of probability measures on $\mathcal{B}(\mathbb{R}^q)$. 
For $\mu,\nu\in\mathcal{P}(\mathbb{R}^d)$, let $\mathcal{C}(\mu,\nu)$ denote the set of probability measures $\zeta$
on $\mathcal{B}(\mathbb{R}^{2d})$ such that its respective marginals are $\mu,\nu$. For two probability measures $\mu$ and $\nu$, the Wasserstein distance of order $p \geq 1$ is defined as, for any $ \mu,\nu\in\mathcal{P}(\rset^d)$, $
W_p(\mu,\nu):=\inf_{\zeta\in\mathcal{C}(\mu,\nu)}
\left(\int_{\rset^d}\int_{\rset^d}|\theta-\theta'|^p\zeta(\rmd \theta \rmd \theta')\right)^{1/p}$.

\section{Main results and comparisons}\label{resultcomparison}
Let $f: \R^d \times \R^m \rightarrow \R$ be a measurable function. It satisfies $\E[|f(\theta, X)|]<\infty$ for all $\theta \in \R^d$, where $X$ is a random variable with probablity law $\mathcal{L}(X)$. Let $U: \R^d \rightarrow \R$ defined by $U(\theta) := \E[f(\theta, X)]$ be a continuously differentiable function with gradient denoted by $h:=\nabla U$. Moreover, define
\begin{equation}\label{pibetaexp}
\pi_{\beta}(A) := \frac{\int_A e^{-\beta U(\theta)} \, \rmd \theta}{\int_{\R^d} e^{-\beta U(\theta)} \, \rmd \theta}, \quad A \in \mathcal{B}(\R^d),
\end{equation}
with $\int_{\R^d} e^{-\beta U(\theta)} \, \rmd \theta <\infty$.

Denote by $(\mathcal{G}_n)_{n\in\mathbb{N}}$ a given filtration representing the flow of past information, and denote by $\mathcal{G}_{\infty} := \sigma(\bigcup_{n \in \mathbb{N}} \mathcal{G}_n)$. Fix $m \geq 1$. Let $(X_n)_{n\in\mathbb{N}}$ be an $\mathbb{R}^m$-valued, $(\mathcal{G}_n)$-adapted process with $X_n\sim \mathcal{L}(X)$ for all $n \in \mathbb{N}$.  It is assumed throughout the paper that $\theta_0$, $\mathcal{G}_{\infty}$ and $(\xi_{n})_{n\in\mathbb{N}}$ are independent. Next, we introduce our main assumptions.

Fix $\beta>0$. For each $\lambda >0$, the SGLD algorithm is given by, for any $n\in\mathbb{N}$,
\begin{equation}\label{eq:discreteTimeSGLD}
\theta^{\lambda}_0:=\theta_0,\quad \theta^{\lambda}_{n+1}:=\theta^{\lambda}_n-\lambda H(\theta^{\lambda}_n,X_{n+1})+\sqrt{2\lambda\beta^{-1}}\xi_{n+1},
\end{equation}
where $H:\mathbb{R}^d\times\mathbb{R}^m\to\mathbb{R}^d$ is a measurable function and $(\xi_n)_{n\in\mathbb{N}}$ is an independent sequence of standard $d$-dimensional Gaussian random variables. 

Then, we present our assumptions. The first assumption describes the requirement on the moment of the initial parameter $\theta_0$. Moreover, it is stated that stochastic gradient $H(\theta,\cdot)$ is assumed to be unbiased.
\begin{assumption}\label{iid}
$|\theta_0| \in L^4$. The process $(X_n)_{n \in \nset}$ is i.i.d.. 
Moreover, it holds that $\E[H(\theta,X_0)]=h(\theta)$.
\end{assumption}
Our second assumption describes the requirement on the moment of the initial data $X_0$ and on the regularity of the stochastic gradient with respect to its first and second arguments. As a result, growth estimates are derived.
\begin{assumption}\label{loclip}
There exist $\eta: \mathbb{R}^m \rightarrow [1, \infty)$ with $(1+|X_0|)\eta(X_0) \in L^4$ and positive constants $L_1$, $L_2$ such that, for all $x,x'\in\mathbb{R}^m$ and $\theta, \theta'\in\mathbb{R}^d$,
\begin{align*}
 |H(\theta,x)- H(\theta',x)| & \le L_1\eta(x)|\theta-\theta'|, \\
 |H(\theta,x)- H(\theta,x')| & \le L_2(\eta(x)+\eta(x'))(1+ |\theta|)|x-x'|.
\end{align*}
\end{assumption}
\begin{remark}\label{rem:BoundsOnH} 
Assumption \ref{loclip} implies, for all $\theta,\theta' \in \rset^{d}$,
\begin{equation}\label{mulyan}
| h(\theta)-h(\theta')| \leq L_1\E[\eta(X_0)]|\theta-\theta' | \,.
\end{equation}
Also, Assumption \ref{loclip} implies
\begin{equation}\label{growthsup}
|H(\theta,x)|\leq L_1\eta(x)|\theta|+L_2\bar{\eta}(x)+H_\star,
\end{equation}
where 
$\bar{\eta}(x) = (\eta(x)+\eta(0))|x|$ and $H_\star:=|H(0,0)|$.
Moreover, under Assumptions \ref{iid} and \ref{loclip}, the gradient $h(\theta)=\E[H(\theta,X_0)]$ for all $\theta\in\mathbb{R}^{d}$, is well-defined. 
\end{remark}
The proof of the statements in Remark \ref{rem:BoundsOnH} is postponed to Appendix~\ref{proof:rem:BoundsOnH}.

Our next assumption is a \textit{dissipativity} condition for stochastic gradients. We note that this and the previous assumption significantly relax the analogous requirements found in the literature, e.g. see \cite{raginsky}, \cite{nonconvex} and references therein.
\begin{assumption}\label{assum:dissipativity}
There exist a measurable (symmetric matrix-valued) function $A:\rset^m\to\rset^{d\times d}$ and a measurable function $\hat{b}: \mathbb{R}^m \to \mathbb{R}$ such that for any $ x\in \mathbb{R}^m$, $y \in \rset^d$, $\langle y, A(x) y\rangle \geq 0$ and for all $\theta \in \mathbb{R}^d$ and $x\in\mathbb{R}^m$, 
\[
\langle H(\theta,x),\theta\rangle\geq \langle \theta, A(x) \theta\rangle -\hat{b}(x).
\] 
The smallest eigenvalue of $\E[A(X_0)]$ is a positive real number $a>0$ and $E[\hat{b}(X_0)] := b>0$.
\end{assumption}
\begin{remark}By Assumptions \ref{iid} and \ref{assum:dissipativity}, one obtains a dissipativity condition of $h$, i.e., for any $\theta\in\mathbb{R}^d$, $
\left\langle h(\theta),\theta\right\rangle\geq a |\theta|^2-b.$
\end{remark}

\begin{remark} We emphasize that we call the process $(X_n)_{n\geq 0}$ as `data', following the convention \cite{convex,nonconvex}. This can represent `data' in the classical meaning but also can represent, e.g., samples from variational approximations in the Bayesian inference setting (see Section~\ref{sec:VI}). In the latter case, its statistical properties are straightforward to assess (since the variational approximation is a design choice) and our assumptions are easier to verify as shown in Sec.~\ref{sec:VI}.
\end{remark}


We next state our main result, which fully characterises the convergence in Wasserstein-1 distance of the law of the SGLD at its $n$-th iteration, which is denoted by $\mathcal{L}(\theta_n^\lambda)$, to the target measure $\pi_\beta$. Define first
\begin{equation}
\label{eq:definition-lambda-max}
\lambda_{\max}:= \min\left\{\frac{\min\{a,a^{1/3}\}}{16 (1+L_1)^2\left(\mathbb{E}\left[(1+\eta(X_0))^4\right]\right)^{1/2}},\frac{1}{a}\right\} \,,
\end{equation}
where $L_1$ and $a$ are defined in Assumptions \ref{loclip} and \ref{assum:dissipativity}, respectively.

\begin{theorem}\label{main} Let Assumptions \ref{iid}, \ref{loclip} and \ref{assum:dissipativity} hold. Then, there exist constants $\dot{c}, C_1,C_2, C_3>0$ such that, for every $\beta>0$, $0<\lambda\leq \lambda_{\max}$, and $\ n\in\mathbb{N}$,
\begin{equation*}
W_1(\mathcal{L}(\theta^{\lambda}_n),\pi_{\beta})\leq C_1 \rme^{-\dot{c}\lambda n/2}(\mathbb{E}[|\theta_0|^4]+1) +(C_2+C_3)\sqrt{\lambda},
\end{equation*}
where $\dot{c}$ is given in \eqref{dotc}, $C_1,C_2, C_3$ are given explicitly in \eqref{mainthmconst}. Moreover, for any $\varepsilon>0$, if we choose $\lambda \leq \frac{\varepsilon^2}{4(C_2+C_3)^2} \wedge \lambda_{\max}$, and 
\[
n\geq \frac{C_{\star}e^{C_{\star}(1+d/\beta)(1+\beta)}}{\varepsilon^2\dot{c}}\left(1+\frac{1}{(1-e^{-\dot{c}})^2}\right)\ln\left( \frac{C_{\star}e^{C_{\star}(1+d/\beta)(1+\beta)}}{\varepsilon}\left(1+\frac{1}{1-e^{-\dot{c}}}\right)\right)
\]
with $C_{\star}>0$ independent of $d, \beta, n$, then $W_1(\mathcal{L}(\theta^{\lambda}_n),\pi_{\beta})\leq \varepsilon$.
\end{theorem}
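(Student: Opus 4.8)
The plan is to follow the route that has become standard for Langevin-type algorithms without log-concavity: combine the exponential contraction of the Langevin semigroup in a suitable Wasserstein-type metric (Eberle's reflection-coupling metric) with a careful control of the one-step error of the discretized, stochastic-gradient scheme, and then telescope. The non-convexity of $U$ is what forces the use of a reflection coupling rather than a synchronous one; the local nature of Assumptions~\ref{loclip}--\ref{assum:dissipativity} is what forces all moment estimates to be carried to fourth order.

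\textbf{Step 1 (uniform moment estimates).} I would first establish bounds on $\sup_n\E[|\theta_n^\lambda|^2]$ and $\sup_n\E[|\theta_n^\lambda|^4]$ that are uniform in $0<\lambda\le\lambda_{\max}$, together with analogues for the auxiliary continuous-time processes below. Expanding $|\theta_{n+1}^\lambda|^2$, taking $\mathcal{G}_n$-conditional expectations, and using the local dissipativity (Assumption~\ref{assum:dissipativity}, whence $\langle h(\theta),\theta\rangle\ge a|\theta|^2-b$) together with the growth bound \eqref{growthsup} and Assumption~\ref{iid}, one obtains a recursion $\E[|\theta_{n+1}^\lambda|^2]\le(1-c\lambda)\E[|\theta_n^\lambda|^2]+c'\lambda$ valid precisely when $\lambda\le\lambda_{\max}$ (this is how the definition \eqref{eq:definition-lambda-max} arises), and a similar one at fourth order; the fourth moment must be tracked because the second inequality of Assumption~\ref{loclip}, through the factor $(1+|\theta|)|x-x'|$ paired with $\eta(X_0)|X_0|\in L^4$, only closes after a Cauchy--Schwarz argument costing two extra powers of $\theta$. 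These estimates produce the factor $\E[|\theta_0|^4]+1$.

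\textbf{Step 2 (contraction of the diffusion) and Step 3 (telescoping).} By \eqref{mulyan} the drift $h$ is globally Lipschitz, and it is dissipative by the remark following Assumption~\ref{assum:dissipativity}; hence \cite{eberle} provides a concave increasing $f$ with $f(r)\le r$ such that, with $\rho(\theta,\theta')=f(|\theta-\theta'|)$, the Langevin semigroup $(P_t)$ satisfies $W_\rho(\mu P_t,\nu P_t)\le e^{-\dot c t}W_\rho(\mu,\nu)$, where $\dot c$ is of order $e^{-C_\star d/\beta}$, and $W_\rho$ is equivalent to $W_1$ up to a factor of order $e^{C_\star d/\beta}$; this furnishes $C_0$ and the $e^{C_\star d/\beta}$ in the complexity bound. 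I would then introduce the continuous-time interpolation $\bar\theta_t^\lambda$ of \eqref{eq:discreteTimeSGLD} (so $\mathcal{L}(\bar\theta_{n\lambda}^\lambda)=\mathcal{L}(\theta_n^\lambda)$), compare one SGLD step from $\theta_n^\lambda$ with the genuine Langevin diffusion run for time $\lambda$ from $\theta_n^\lambda$ under Eberle's reflection coupling, and telescope: writing $\mu_n=\mathcal{L}(\theta_n^\lambda)$,
\begin{equation*}
W_\rho(\mu_{n+1},\pi_\beta)\le e^{-\dot c\lambda}W_\rho(\mu_n,\pi_\beta)+\mathcal{E}_{\mathrm{disc}}+\mathcal{E}_{\mathrm{grad}},
\end{equation*}
where $\mathcal{E}_{\mathrm{disc}}=O(\lambda^{3/2})$ comes from discretizing the exact drift ($\|Z_s-Z_{n\lambda}\|_{L^2}=O(\sqrt s)$, plus the moment bounds of Step~1) and $\mathcal{E}_{\mathrm{grad}}$ from replacing $h(\cdot)$ by $H(\cdot,X_{n+1})$. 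The crucial point is that the gradient error $-\lambda\bigl(H(\theta_n^\lambda,X_{n+1})-h(\theta_n^\lambda)\bigr)$ is mean-zero given the past (Assumption~\ref{iid}), so although it is only $O(\lambda)$ per step, its contributions to $\sum_k e^{-\dot c\lambda(n-k)}(\cdots)$ are orthogonal and aggregate as $\bigl(\lambda^2\sum_k e^{-2\dot c\lambda(n-k)}\bigr)^{1/2}=O(\sqrt{\lambda/\dot c})$, while the deterministic $O(\lambda^{3/2})$ discretization errors sum to $O(\lambda^{3/2})/(1-e^{-\dot c\lambda})=O(\sqrt\lambda)$ as well. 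Iterating and converting $W_\rho$ back to $W_1$ gives the stated bound, with $C_2$ the discretization constant and $C_3$ the stochastic-gradient (variance) constant, both controlled through $\E[(1+\eta(X_0))^4]$, $\E[\bar\eta(X_0)^4]$, the moment bounds of Step~1, and factors involving $1/\dot c$ (which re-emerge in the complexity statement as the $(1-e^{-\dot c})^{-2}$ terms). For the $\varepsilon$-complexity, one takes $\lambda$ so that $(C_2+C_3)\sqrt\lambda\le\varepsilon/2$, then $n$ so that $C_1e^{-C_0\lambda n}(\E|\theta_0|^4+1)\le\varepsilon/2$, and solves the latter for $n$ after substituting $C_0$ and $C_1$ in terms of $\dot c,d,\beta$.

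\textbf{Main obstacle.} The delicate part is Step~3: since $U$ is merely dissipative one cannot contract with a synchronous coupling, so Eberle's reflection coupling must be used \emph{simultaneously} with the martingale structure of the stochastic-gradient error. One must check that the coupled difference process remains adapted, so that the $O(\lambda)$-per-step gradient noise genuinely contributes only $O(\sqrt\lambda)$ in aggregate rather than $O(\sqrt{\lambda}\sqrt{\lambda n})$ or worse, and that the Lyapunov/drift hypotheses underlying the contraction of \cite{eberle} do hold under the \emph{local} Assumptions~\ref{loclip}--\ref{assum:dissipativity}, with every constant tracked in $d$ and $\beta$ so that the explicit bounds in \eqref{mainthmconst} and the complexity estimate can be written down.
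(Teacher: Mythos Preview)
Your Step~1 and the identification of Eberle-type contraction as the right tool are correct and match the paper. The gap is in Step~3, precisely at the point you flag as the ``main obstacle,'' but your proposed resolution does not work.

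You write the step-level recursion $W_\rho(\mu_{n+1},\pi_\beta)\le e^{-\dot c\lambda}W_\rho(\mu_n,\pi_\beta)+\mathcal{E}_{\mathrm{disc}}+\mathcal{E}_{\mathrm{grad}}$ and then claim that, because the gradient noise is a martingale increment, ``its contributions to $\sum_k e^{-\dot c\lambda(n-k)}(\cdots)$ are orthogonal.'' But once you have passed to a Wasserstein-type distance between \emph{laws}, the per-step errors $\mathcal{E}_{\mathrm{grad}}$ are deterministic nonnegative numbers; there is no orthogonality left to exploit. Iterating your recursion gives $\sum_k e^{-\dot c\lambda k}\mathcal{E}_{\mathrm{grad}}$, which is of order $\lambda/(\dot c\lambda)=O(1)$, not $O(\sqrt\lambda)$. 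The martingale structure only helps while you are still manipulating random variables in $L^2$; it is destroyed the moment you take the Wasserstein infimum. ``Checking adaptedness'' cannot repair this: the incompatibility is structural, between taking a $W_\rho$ distance at every step and invoking orthogonality across steps.

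The paper resolves this by a different decomposition. It introduces a blocking scale $T=\lfloor 1/\lambda\rfloor$ and, on each block $[nT,(n{+}1)T]$, runs an auxiliary continuous Langevin process $\bar\zeta^{\lambda,n}$ started from the SGLD value $\bar\theta^\lambda_{nT}$ (Definition~\ref{zetaprocess}). The comparison $\bar\theta^\lambda$ versus $\bar\zeta^{\lambda,n}$ over one block is done by \emph{synchronous} coupling in $L^2$ (Lemma~\ref{convergencepart1}): here one is still working with random variables, the $T\approx 1/\lambda$ increments $I_k=\int_{nT+k-1}^{nT+k}\bigl[h(\bar\zeta_s^{\lambda,n})-H(\bar\zeta_s^{\lambda,n},X_{nT+k})\bigr]\,\rmd s$ are genuinely conditionally uncorrelated, and one obtains an $L^2$ error of order $\sqrt\lambda$ per block after Gr\"onwall. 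Only \emph{then} is the contraction applied---in the multiplicative semimetric $w_{1,2}$ of \eqref{w1p} and Proposition~\ref{contr}, not the additive $W_\rho$ you describe---to telescope across the $n$ blocks (Lemma~\ref{convergencepart2}); at that stage the per-block error is already the deterministic number $O(\sqrt\lambda)$, so a plain geometric sum suffices. In short: synchronous $L^2$ coupling \emph{within} blocks to harvest the martingale gain, reflection-coupling contraction \emph{between} blocks to sum. Your proposal tries to do both simultaneously at the single-step scale, and that cannot yield the $\sqrt\lambda$ rate.
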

By further observing a trivial functional inequality, one can state an analogous result in Wasserstein-2 distance, which matches the rate obtained in \cite{alex} but which is known to be suboptimal.
\begin{corollary}\label{cw2} Let Assumptions \ref{iid}, \ref{loclip} and \ref{assum:dissipativity} hold. Then, there exist constants $\dot{c}, C_4, C_5, C_6>0$ such that, for every $\beta>0$, $0<\lambda\leq \lambda_{\max}$, and $n\in\mathbb{N},$
\begin{equation*}
W_2(\mathcal{L}(\theta^{\lambda}_n),\pi_{\beta})\leq C_4 \rme^{-\dot{c}\lambda n/4}(\mathbb{E}[|\theta_0|^4]+1) +(C_5+C_6)\lambda^{1/4},
\end{equation*}
where $\dot{c}$ is given in \eqref{dotc}, $C_4, C_5, C_6$ are given explicitly in \eqref{cw2const}. Moreover, for any $\varepsilon>0$, if we choose $\lambda \leq \frac{\varepsilon^4}{16(C_5+C_6)^4} \wedge \lambda_{\max}$, and 
\[
n\geq \frac{C_{\star}e^{C_{\star}(1+d/\beta)(1+\beta)}}{\varepsilon^4\dot{c}}\left(1+\frac{1}{(1-e^{-\dot{c}/2})^4}\right)\ln\left( \frac{C_{\star}e^{C_{\star}(1+d/\beta)(1+\beta)}}{\varepsilon}\left(1+\frac{1}{1-e^{-\dot{c}/2}}\right)\right)
\]
with $C_{\star}>0$ independent of $d, \beta, n$, then $W_2(\mathcal{L}(\theta^{\lambda}_n),\pi_{\beta})\leq \varepsilon$.
\end{corollary}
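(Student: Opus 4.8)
The plan is to derive Corollary~\ref{cw2} from Theorem~\ref{main} by feeding the $W_1$-bound into an elementary interpolation inequality, so that essentially no new dynamical estimates are needed. Two auxiliary facts are used, both of which are by-products of (or immediate once one has set up) the proof of Theorem~\ref{main}. First, a uniform-in-$n$ fourth moment bound $\sup_{n\in\nset}\E[|\theta^\lambda_n|^4]\leq C_{\mathrm m}\bigl(\E[|\theta_0|^4]+1\bigr)$ for all $\lambda\in(0,\lambda_{\max}]$: under Assumptions~\ref{iid}--\ref{assum:dissipativity} this follows from the usual one-step recursion for $\CPE{|\theta^\lambda_{n+1}|^4}{\mathcal{G}_n}$, where the cross term is controlled by local dissipativity, the growth of $|H|$ by \eqref{growthsup}, and the moments of $\eta(X_0),\bar\eta(X_0)$ by Assumption~\ref{iid}. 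Second, $\pi_\beta$ has finite moments of every order: the dissipativity condition forces $U(\theta)\geq c_U|\theta|^2-c_0$ for some $c_U>0$, so $\pi_\beta\wasypropto \rme^{-\beta U}$ has Gaussian-type tails.

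The ``trivial functional inequality'' is the following: for $\mu,\nu\in\mathcal{P}(\rset^d)$ and any $\zeta\in\mathcal{C}(\mu,\nu)$ realising $W_1(\mu,\nu)$, Hölder's inequality with exponents $3/2$ and $3$, together with $(a+b)^4\leq 8(a^4+b^4)$, yields
\[
W_2(\mu,\nu)^2\leq\int_{\rset^d}\!\int_{\rset^d}|\theta-\theta'|^2\,\zeta(\rmd\theta\,\rmd\theta')\leq W_1(\mu,\nu)^{2/3}\Bigl(8\!\int_{\rset^d}\!|\theta|^4\mu(\rmd\theta)+8\!\int_{\rset^d}\!|\theta|^4\nu(\rmd\theta)\Bigr)^{1/3}.
\]
Taking $\mu=\mathcal{L}(\theta^\lambda_n)$, $\nu=\pi_\beta$, inserting the bound of Theorem~\ref{main}, and using $(a+b)^{1/3}\leq a^{1/3}+b^{1/3}$ and $(\E[|\theta_0|^4]+1)^{1/3}\leq\E[|\theta_0|^4]+1$, one gets at once an estimate of the form $C_5\rme^{-C_4\lambda n}(\E[|\theta_0|^4]+1)+(C_6+C_7)\lambda^{\kappa}$: the exponentially small term keeps its structure (only its rate is rescaled, which fixes $C_4$), while the $\sqrt\lambda$-term of Theorem~\ref{main} becomes the polynomial-in-$\lambda$ term, with $C_5,\dots,C_7$ read off from \eqref{mainthmconst} and from $C_{\mathrm m},\int|\theta|^4\pi_\beta(\rmd\theta)$. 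The precise exponent $\kappa=1/4$ recorded in \eqref{cw2const} is obtained by, instead, carrying the $W_2$-estimate through the auxiliary-process / triangle-inequality decomposition behind Theorem~\ref{main}: the $L^2$ moment and discretisation estimates transfer verbatim and already furnish an $O(\lambda^{1/4})$ term, and the interpolation above is then needed only for the ergodicity step (where one has the $W_1$-contraction of \cite{eberle}), applied to an exponentially small quantity so that it does not degrade the polynomial rate.

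The quantitative $\varepsilon$-statement is the same two-step rearrangement as in Theorem~\ref{main}. First choose $\lambda\leq\varepsilon^4/(16(C_6+C_7)^4)\wedge\lambda_{\max}$ so that the polynomial term is $\leq\varepsilon/2$ — the fourth power of $\varepsilon/(C_6+C_7)$ and the factor $1-\rme^{-\dot{c}/2}$ replacing the square and $1-\rme^{-\dot{c}}$ precisely because the rate is now $\lambda^{1/4}$ rather than $\lambda^{1/2}$ — and then choose $n$ large enough that $C_5\rme^{-C_4\lambda n}(\E[|\theta_0|^4]+1)\leq\varepsilon/2$, using that $C_4,C_5$ depend on $d,\beta$ only through a factor $\rme^{C_{\star}d/\beta}$ inherited from the \cite{eberle}-contraction rate $\dot{c}$ and the moment constants; this yields the displayed lower bound on $n$. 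The main point to be careful about — and the only real obstacle, since the dynamical work is already done in Theorem~\ref{main} — is keeping the fourth moment bounds genuinely uniform in $n$ under the merely \emph{local} conditions of Assumptions~\ref{loclip}--\ref{assum:dissipativity} (with the weight $\eta(X_0)$ only $L^4$-integrable), and tracking the $\rme^{C_{\star}d/\beta}$-type dependence of the constants cleanly through the interpolation and the final rearrangement.
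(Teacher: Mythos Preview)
Your second approach --- rerunning the triangle-inequality decomposition directly in $W_2$ --- is what the paper does, but your description of it is off in two places, and these are not cosmetic.

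First, the ``trivial functional inequality'' the paper invokes is not your H\"older interpolation $W_2^2\le W_1^{2/3}M_4^{1/3}$ (which, as you note, only yields $\lambda^{1/6}$). It is the pointwise bound $|\theta-\theta'|^2\le 2[1\wedge|\theta-\theta'|](1+V_2(\theta)+V_2(\theta'))$, which gives $W_2^2\le 2\,w_{1,2}$ for the weighted distance $w_{1,2}$ of \eqref{w1p}. This is the inequality that drives everything, because the contraction of Proposition~\ref{contr} is stated in $w_{1,2}$, not in $W_1$.

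Second, you say the $L^2$ discretisation estimates ``already furnish an $O(\lambda^{1/4})$ term'' and the functional inequality is needed only for the ergodicity leg $W_2(\mathcal{L}(Z^\lambda_t),\pi_\beta)$. That is not where the rate degrades. The one-step comparison $W_2(\mathcal{L}(\bar\theta^\lambda_t),\mathcal{L}(\bar\zeta^{\lambda,n}_t))$ is $O(\sqrt\lambda)$ directly (this is \eqref{vibd1}). The drop to $\lambda^{1/4}$ occurs in the \emph{telescoping} leg $W_2(\mathcal{L}(\bar\zeta^{\lambda,n}_t),\mathcal{L}(Z^\lambda_t))$: there one writes $W_2\le\sum_k W_2(\cdot,\cdot)\le\sum_k\sqrt{2\,w_{1,2}(\cdot,\cdot)}$, applies the $w_{1,2}$-contraction, bounds each one-step $w_{1,2}$ by $W_2\cdot(\text{fourth moments})^{1/2}=O(\sqrt\lambda)$, and the square root turns this into $\lambda^{1/4}$ per summand; the geometric series finishes (this is Corollary~\ref{convergencepart2w2}). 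The ergodicity leg also uses $W_2\le\sqrt{2w_{1,2}}$, but there it only halves the exponential rate --- yielding $C_4=\dot c/4$ and the $1-\rme^{-\dot c/2}$ in the $\varepsilon$-statement --- not the polynomial one.

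So your outline is salvageable, but as written it would not produce the constants in \eqref{cw2const}: you need $W_2^2\le 2w_{1,2}$ rather than the H\"older bound, and you need to apply it inside the telescoping sum, not only at the final comparison with $\pi_\beta$. Your remarks on the $\varepsilon$-rearrangement are fine.
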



\begin{remark}
By \eqref{mainthmconst} and \eqref{cw2const}, one notes that the constants $C_2$ and $C_5$ are of order $\sqrt{d/\beta}$, and this implies that large values of $d$ on the upper bound can be controlled by large values of $\beta$. However, the constants $C_1$, $C_3$, $C_4$, $C_6$ have exponential dependence on $d$ and $\beta$, rather than $d/\beta$, due to the contraction result in \cite[Theorem 2.2]{eberle}. Furthermore, one notes that the undesirable dependence on $d$, which is derived under a geometric drift condition (and which, in turn, is implied by a dissipativity condition such as Assumption \ref{assum:dissipativity}), can be found, typically, in extreme cases, i.e. pathological examples of theoretical nature. This appears not to be the case in many practical applications. 
\end{remark}

\begin{remark}
In the case where $H(\theta, x) = h(\theta)$ for all $\theta \in \mathbb{R}^d$ and $x\in\mathbb{R}^m$, i.e. when the stochastic gradient coincides with the full gradient, Theorem \ref{main} and Corollary \ref{cw2} provide the full  non-asymptotic convergence results of the unadjusted Langevin algorithm (ULA) under dissipativity and Lipschitz continuity assumptions.
\end{remark}

Let $\hat{\theta} :=  \theta^{\lambda}_n$, where $\theta^{\lambda}_n$ denotes the $n$-th iteration of the SGLD algorithm \eqref{eq:discreteTimeSGLD}. Then, an upper bound for the expected excess risk $\mathbb{E}[U( \theta^{\lambda}_n)] - \inf_{\theta \in \mathbb{R}^d} U(\theta) $ can be obtained by using the following splitting: $
\mathbb{E}[U( \theta^{\lambda}_n)] - \inf_{\theta \in \mathbb{R}^d} U(\theta) = \left( \mathbb{E}[U( \theta^{\lambda}_n)]  -  \mathbb{E}[U(Z_{\infty})]\right) + \left( \mathbb{E}[U(Z_{\infty})]- \inf_{\theta \in \mathbb{R}^d} U(\theta) \right)$, where $Z_{\infty}\sim \pi_{\beta}$ with $\pi_{\beta}$ defined in \eqref{pibetaexp}. By using Corollary \ref{cw2}, an upper bound for the first term on the RHS of the above equality can be obtained as explained in \cite[Lemma~3.5]{raginsky}. The second term on the RHS of the above equality can be upper bounded by applying \cite[Proposition~3.4]{raginsky}. The precise statement with explicit constants is provided below.
\begin{corollary}\label{eer} Let Assumptions \ref{iid}, \ref{loclip} and \ref{assum:dissipativity} hold. Then, there exist constants $\dot{c}, C^{\sharp}_1, C^{\sharp}_2, C^{\sharp}_3>0$ such that, for every $\beta>0$, $0<\lambda\leq \lambda_{\max}$, $n\in\mathbb{N},$
\[
\mathbb{E}[U( \theta^{\lambda}_n)]  - \inf_{\theta \in \mathbb{R}^d} U(\theta)  \leq  C^{\sharp}_1e^{-\dot{c}\lambda n/4}+C^{\sharp}_2\lambda^{1/4}+C^{\sharp}_3,
\]
where $\dot{c}$ is given in \eqref{dotc}, $C^{\sharp}_1, C^{\sharp}_2, C^{\sharp}_3$ are given explicitly in \eqref{eerconst} and \eqref{eerconst2}. Moreover, for any $\varepsilon>0$, if we choose $\beta \geq \beta_{\varepsilon} \vee \frac{3d}{\varepsilon}\log\left(\frac{e L_1\mathbb{E}[\eta(X_0)]}{ad}\left(b+1\right)\left(d+1\right)\right) $ with $\beta_{\varepsilon}$ denoting the root of the function $f^\sharp(\beta) = \frac{\log\left( \beta+1\right)}{ \beta}-\frac{\varepsilon}{3d}$, $\lambda \leq \frac{\varepsilon^4}{81(C^{\sharp}_2)^4} \wedge \lambda_{\max}$, and $n\geq \frac{C_{\star}e^{C_{\star}(1+d/\beta)(1+\beta)}}{\varepsilon^4\dot{c}}\left(1+\frac{1}{(1-e^{-\dot{c}/2})^4}\right)\ln\left( \frac{C_{\star}e^{C_{\star}(1+d/\beta)(1+\beta)}}{\varepsilon}\left(1+\frac{1}{1-e^{-\dot{c}/2}}\right)\right)$ with $C_{\star}>0$ independent of $d, \beta, n$, then $\mathbb{E}[U( \theta^{\lambda}_n)]  - \inf_{\theta \in \mathbb{R}^d} U(\theta) \leq \varepsilon$.
\end{corollary}
\begin{remark} By \eqref{eerconst2}, one observes that $C^{\sharp}_3$ vanishes as $\beta$ tends to infinity. This implies that $\pi_{\beta}$ converges to a distribution which concentrates on the minimizers of $U$ for large enough $\beta$. This result provides a nonasymptotic bound for this concentration -- thus, sampling from $\pi_{\beta}$ solves the optimization problem of minimizing $U$. However, for a large $\beta$, it would require a large number of iterations for the SGLD algorithm to reach a given precision level measured using expected excess risk, see the expression for the lower bound of $n$ in Corollary \ref{eer}. This is due to a slow convergence of the Langevin dynamics \eqref{sdeintro} to the target distribution $\pi_{\beta}$ as the contraction constant $\dot{c}$ is inversely related to $\beta$, see \eqref{dotc}, which, in turn, lead to a slow convergence of SGLD to $\pi_{\beta}$. There is thus a trade-off between the precision of the approximation and the efficiency of the algorithm. Consequently, one may set $\beta = \beta_{\varepsilon} \vee \frac{3d}{\varepsilon}\log\left(\frac{e L_1\mathbb{E}[\eta(X_0)]}{ad}\left(b+1\right)\left(d+1\right)\right) $ so as to achieve a given precision level for the expected excess risk while ensuring that the SGLD algorithm takes the smallest possible number of iterations to sample approximately from $\pi_{\beta}$ (also within the given precision level). Furthermore, for any precision level, once the value for $\beta$ is specified, one may calculate the upper bound of $\lambda$ using the expression given in Corollary \ref{eer} and then set the upper bound as the value of $\lambda$ for the efficiency of the algorithm as $\lambda$ is negatively related to $n$.
\end{remark}
The proofs of Theorem \ref{main}, Corollary \ref{cw2}, \ref{eer} are postponed to Section~\ref{proofoverview}. Furthermore, the explicit expressions for the constants in the main results are summarised in Table \ref{tab:constantsexp} and \ref{tab:constantskeydep}.
\subsection{Related work and discussions}
\begin{table}[h]
\tiny
\begin{center}
\begin{minipage}{\textwidth}
\caption{Comparison of Theorem \ref{main} and Corollary \ref{cw2} with \cite[Proposition~3.3]{raginsky}, \cite[Theorem~1.4]{alex} and \cite[Theorem~2.5]{nonconvex}.}\label{sample-table}
\begin{tabular}{m{2em}>{\centering\arraybackslash\hspace{0pt}}m{7em}>{\centering\arraybackslash\hspace{0pt}}m{8em} >{\centering\arraybackslash\hspace{0pt}}m{7em} >{\centering\arraybackslash\hspace{0pt}}m{2em} >{\centering\arraybackslash\hspace{0pt}}m{12em}}
\toprule
 		 & Smoothness & Contractivity &$\textnormal{Var}(H(\theta, X_0))$& Data & Results\\
\midrule
  \cite{raginsky}& Globally Lipschitz $H$ in $\theta$ uniformly in $x$ &Uniform in $x$ dissipativity of $H$ & Bounded by $C|\theta|^2$& i.i.d.    & \multicolumn{1}{c}{$W_2(\mathcal{L}(\theta^{\lambda}_n),\pi_{\beta}) \leq C\lambda^{5/4}n$}\\
  \midrule
\cite{alex}  &Globally Lipschitz $h$ &  Convexity at infinity of $h$ &  Bounded by $C|\theta|^2\lambda^{\alpha}$, $\alpha >0$& i.i.d.    &$W_1(\mathcal{L}(\theta^{\lambda}_n),\pi_{\beta})\leq  C\lambda^{\alpha/2}$, \newline$W_2(\mathcal{L}(\theta^{\lambda}_n),\pi_{\beta})\leq  C\lambda^{\alpha/4}$\\
\midrule
\cite{nonconvex} &  Globally Lipschitz $H$ in  $\theta$ and $ x$& Uniform in $x$ dissipativity of $H$ &--- & $L$-mixing &$W_1(\mathcal{L}(\theta^{\lambda}_n),\pi_{\beta})\leq  C\lambda^{1/2}$ \\ 
\midrule
This paper &  Locally Lipschitz $H$ in $\theta$ and $ x$& Local dissipativity of $H$ &--- &i.i.d.    &$W_1(\mathcal{L}(\theta^{\lambda}_n),\pi_{\beta})\leq  C\lambda^{1/2}$, \newline$W_2(\mathcal{L}(\theta^{\lambda}_n),\pi_{\beta})\leq  C\lambda^{1/4}$ \\ 
\bottomrule
\end{tabular}
\end{minipage}
\end{center}
\end{table}

Under the preceding Assumptions \ref{iid}, \ref{loclip} and \ref{assum:dissipativity}, a convergence result in $W_1$ distance with rate 1/2 is given in Theorem \ref{main}, while in Corollary \ref{cw2}, a convergence result in $W_2$ distance with rate 1/4 is provided. \cite[Theorem~3.10]{convex} provides a convergence result in $W_2$ distance under similar assumptions in the convex setting, i.e. with Assumption \ref{assum:dissipativity} replaced by a strong convexity requirement. Moreover, the analysis of Theorem \ref{main} follows a similar approach as in \cite{nonconvex}, while its framework is crucially extended by assuming local Lipschitz continuity of $H$ in Assumption \ref{loclip}, and non-uniform estimates with respect to the $x$ variable in Assumption \ref{assum:dissipativity}.

Next, we mainly focus on the comparison of our work with \cite{raginsky} and \cite{alex}. In \cite[Proposition~3.3]{raginsky}, a finite-time convergence result of the SGLD algorithm \eqref{eq:discreteTimeSGLD} in Wasserstein-2 distance is provided, and the rate of convergence is shown to be $\lambda^{5/4}n$ with $n$ the number of iterations. To obtain this result, a dissipativity condition \cite[Assumption~(A.3)]{raginsky} is proposed. In \cite[Assumption~(A.1)]{raginsky}, the quantities $f(0,\cdot)$ and $H(0,\cdot)$ are assumed to be bounded, where $U(\theta)=\E[f(\theta,X_0)]$ and $H(\cdot,\cdot)=\nabla_{\theta}f(\cdot,\cdot)$, $\theta \in \mathbb{R}^d$. In addition, it requires the finiteness of an exponential moment of the initial value \cite[Assumption~(A.5)]{raginsky} and the Lipschitz continuity of $H$ in $\theta$ \cite[Assumption~(A.2)]{raginsky}. 
While Corollary \ref{cw2} improves the convergence rate provided in \cite{raginsky} in the sense that our rate of convergence does not depend on the number of iterations, we further require a local Lipschitz continuity of $H(\theta, x)$ in $x$. However, compared to \cite[Assumption~(A.3)]{raginsky}, we allow the dissipativity condition without imposing the uniformity in $x$ in Assumption \ref{assum:dissipativity}, and we require only polynomial moments of the initial value $\theta_0$. Furthermore, in Assumption \ref{loclip}, we relax the (global) Lipschitz condition of $H$ in $\theta$ by allowing the Lipschitz constant to depend on $x$. We note that \cite[Assumption~(A.4)]{raginsky} can be obtained by using Assumptions~\ref{iid} and \ref{loclip}.

Further, we compare our results with those in \cite{alex}. Compared to \cite[Theorem~1.4]{alex} with $\alpha = 1$, Theorem \ref{main} achieves the same rate in $W_1$ without assuming that the variance of the stochastic gradient is controlled by the stepsize \cite[Assumption~1.3]{alex}. To obtain Theorem \ref{main}, we assume a Lipschitz continuity of $H$ in Assumption~\ref{loclip}, while \cite[Theorem~1.4]{alex} requires a Lipschitz continuity of $h$ \cite[Assumption~1.1]{alex}. This latter condition on $h$ is implied by our Assumption \ref{loclip} as indicated in Remark \ref{rem:BoundsOnH}. It is typical though, for many real applications, that the full gradient $h$ is unknown, and crucially one can check conditions only for $H$ (as in Assumption \ref{loclip}) and not for $h$. This is also apparent in Section \ref{examplemain} where Assumptions \ref{loclip} and \ref{assum:dissipativity} are easily checkable for various examples. The same cannot be said for the corresponding conditions  regarding $h$. In a recent update of \cite{alex}, it is noted in \cite[Section~5.3]{alex} that the requirements for \cite[Theorem~1.4]{alex} can be potentially relaxed by replacing the convexity at infinity condition with a uniform dissipativity condition. This observation coincides with the results obtained in  \cite[Theorem~2.5]{nonconvex}, when one considers i.i.d. data, while we further generalise this framework by requiring only a local dissipativity condition, i.e. Assumption \ref{assum:dissipativity}.

\section{Applications}\label{examplemain}
In this section, we use $x^{\mathsf{T}}y$ for any $x, y \in \mathbb{R}^d$ for the inner product (instead of $\langle x, y \rangle$) to make the notation compact. 

\subsection{Variational inference for Bayesian logistic regression}\label{sec:VI}
Variational inference (VI) aims at approximating a posterior distribution $p(w | x)$, where $w$ is the quantity of interest and $x$ is the data, using a parameterized family of distributions which is often called the \textit{variational family} and is denoted by $q_\theta(w)$ \cite{wainwright2008graphical}. Thus, the VI approach converts an inference problem into an optimization problem, where the objective function is typically nonconvex. This enables us to apply our results to the VI problem and come up with theoretical guarantees. We present one such example below and provide guarantees for the application of the VI approach by using the conclusions of our main theorem.

Consider a probabilistic model consisting of a likelihood $p(w|x)$ and a prior $p(w)$. Here we implicitly assume the existence of probability density functions which are used to identify the corresponding probability distributions. To ease the notation, for a distribution $p$, its marginal, conditional and joint distributions are also denoted by $p$, however dependencies on appropriate state variables are explicitly declared. Furthermore, recall that given a joint distribution $p(w, x)$, one observes that for any distribution $q(w)$, $x \in \mathbb{R}^{\bar{m}}$, $\bar{m} \geq 1$, the following holds
\begin{align}\label{elbo}
\log p(x) 	&=\int_{\mathbb{R}^d}q(w)\log\left(\frac{p(w, x)}{q(w)}\right)\, dw+\int_{\mathbb{R}^d}q(w)\log\left(\frac{q(w)}{p(w| x)}\right)\, dw\nonumber\\
						& =  \mathbb{E}_{\mathbf{w}\sim q}\log \frac{p(\mathbf{w},x)}{q(\mathbf{w})}+ \KL(q(w)\|p(w|x)),
\end{align}
where the first term in \eqref{elbo} is usually denoted in VI literature by $\text{ELBO}(q) $. The aim is to choose a suitable approximating family $q_{\theta}$ parameterized by $\theta$, so as to minimize the KL divergence of the two distributions $q_{\theta}(w)$ and $p(w|x)$, for a given $x$, over ${\theta}$. This turns out to be equivalent to maximizing ELBO($q_{\theta}$) since $\log p(x)$ is fixed. One can decompose ELBO($q_{\theta}$)  = $l_1(\theta)+l_2(\theta)$ where $l_1(\theta) = \mathbb{E}_{\mathbf{w}\sim q_{\theta}}[\log p(\mathbf{w}, x)]$ and $l_2(\theta)$ is the entropy of $q_{\theta}$. Moreover, we suppose there exists a transformation $\mathcal{T}_{\theta}$ such that $\mathcal{T}_{\theta}(\mathbf{u}) \overset{\text{d}}{=} \mathbf{w}$. As a result, one obtains 
$
l_1(\theta) := \mathbb{E}_{\mathbf{u}\sim s}[ \log p(\mathcal{T}_{\theta}(\mathbf{u}), x) ] ,
$
and similarly, 
$
l_2(\theta)  := -\mathbb{E}_{\mathbf{u}\sim s}[\log q(\mathcal{T}_{\theta}(\mathbf{u}))  ].
$
This is called {\it reparameterization trick} in VI literature \cite{price1958useful, salimans2013fixed, kingma2013auto, rezende2014stochastic}. By using this technique, one can obtain stochastic estimates of $\nabla_{\theta} (l_1(\theta)+l_2(\theta))$ and then use SGLD algorithm to maximize ELBO($q_{\theta}$). 

We consider an example from Bayesian logistic regression \cite{wainwright2008graphical}. Suppose a collection of data points $\mathcal{X}   = \{(z_i, y_i)\}_{i = 1, \dots, n}$ is given, where $z_i \in \mathbb{R}^d$ and $y_i \in \{0,1\}$ for all $i$. Denote by $\mathcal{Z}_i  = (z_i, y_i)$ for all $i$, then $\mathcal{X} = \{\mathcal{Z}_i \}_{i = 1, \dots, n}$. Assume Gaussian mixture prior to define a multimodal distribution characterized by $p(w, \mathcal{X})  = \pi_0(w)\prod_{i = 1}^n p(\mathcal{Z}_i|w)$, where $\pi_0(w)$ is the prior given by
$
\pi_0( w) \wasypropto  \exp(-\bar{f}(w)) = e^{-|w - \hat{a}|^2/2}+e^{-|w+\hat{a}|^2/2}
$
with $\bar{f}(w) = |w - \hat{a}|^2/2 -\log(1+\exp(-2\hat{a}^{\mathsf{T}}w ))$, $\hat{a}\in \mathbb{R}^d$, $|\hat{a}|^2>1$ and $p(\mathcal{Z}_i|w) = (1/(1+e^{-z_i^{\mathsf{T}}w}))^{y_i}(1-1/(1+e^{-z_i^{\mathsf{T}}w}))^{1-y_i}$ is the likelihood function.
Moreover, take a variational distribution parameterized by $\theta$, which is given as
$
 q_\theta( w) \wasypropto e^{-|w - \theta|^2/2}+e^{-|w+\theta|^2/2}.
$
Then, maximizing $l_1(\theta)+l_2(\theta) = \mathbb{E}_{\mathbf{w}\sim q_{\theta}}[\log p(\mathbf{w}, \mathcal{X}) - \log q_\theta( \mathbf{w}) ]$ in $\theta$ is equivalent to maximizing the following:
\begin{align}\label{vieq1}
& \mathbb{E}_{\mathbf{w}\sim q_{\theta}}\left[-|\mathbf{w} - \hat{a}|^2/2 +\log(1+\exp(-2\hat{a}^{\mathsf{T}}\mathbf{w} )) \right.\nonumber\\
&\hspace{3em}\left. +\sum_{i = 1}^n(-y_i \log(1+e^{-z_i^{\mathsf{T}}\mathbf{w}})+(y_i-1)\log (1+e^{z_i^{\mathsf{T}}\mathbf{w}}))\right] \nonumber \\
& +\mathbb{E}_{\mathbf{w}\sim q_{\theta}}\left[|\mathbf{w} - \theta|^2/2 -\log(1+\exp(-2\theta^{\mathsf{T}}\mathbf{w} ))\right].
\end{align}
Further, the reparameterization technique is applied by considering the mapping $\mathcal{T}_{\theta}(u):= \mathbbm{1}_{\{v = 1\}}(Cu+m)+\mathbbm{1}_{\{v = 0\}}(Cu-m)$ where $v \sim \text{Ber}(q)$ and $\theta = (C, m, q)$. Here, we fix $C = \mathbf{I}_d/4$, $q = 7/8$, and thus $\mathcal{T}_{\theta}(u) = \mathbbm{1}_{\{v = 1\}}(u/4+\theta)+\mathbbm{1}_{\{v = 0\}}(u/4-\theta)$. 
Then, for $\mathbf{u}\sim s$ where $s$ is the standard Gaussian distribution, the expression in \eqref{vieq1} becomes
\begin{align}\label{expressionUVI}
\begin{split}
l_1(\theta)+l_2(\theta) &= \frac{7}{8}\mathbb{E}_{\mathbf{u}\sim s}\left[-|\mathbf{u}/4+\theta- \hat{a}|^2/2 +\log(1+\exp(-2\hat{a}^{\mathsf{T}}(\mathbf{u}/4+\theta) )) \right.  \\
&\quad \left. +\sum_{i = 1}^n(-y_i \log(1+e^{-z_i^{\mathsf{T}}(\mathbf{u}/4+\theta)})+(y_i-1)\log (1+e^{z_i^{\mathsf{T}}(\mathbf{u}/4+\theta)}))\right]  \\
&\quad +\frac{1}{8}\mathbb{E}_{\mathbf{u}\sim s}\left[-|\mathbf{u}/4-\theta- \hat{a}|^2/2 +\log(1+\exp(-2\hat{a}^{\mathsf{T}}(\mathbf{u}/4-\theta) )) \right.  \\
&\quad \left. +\sum_{i = 1}^n(-y_i \log(1+e^{-z_i^{\mathsf{T}}(\mathbf{u}/4-\theta)})+(y_i-1)\log (1+e^{z_i^{\mathsf{T}}(\mathbf{u}/4-\theta)}))\right] \\
&\quad+\frac{7}{8}\mathbb{E}_{\mathbf{u}\sim s}\left[|\mathbf{u}/4+\theta- \theta|^2/2 -\log(1+\exp(-2(\theta^{\mathsf{T}}\mathbf{u}/4+|\theta|^2) )) \right] \\
&\quad+\frac{1}{8}\mathbb{E}_{\mathbf{u}\sim s}\left[|\mathbf{u}/4-\theta- \theta|^2/2 -\log(1+\exp(-2(\theta^{\mathsf{T}}\mathbf{u}/4-|\theta|^2) )) \right].
\end{split}
\end{align}
In what follows, we derive the stochastic gradient expression for the cost function defined in Eq.~\eqref{expressionUVI}. We note that, stochasticity in this example comes from sampling $\mathbf{u}$ variables and constructing empirical expectation estimates, rather than subsampling data points. 
\begin{proposition}\label{prop:VI} Let the objective function of the VI example be defined in \eqref{expressionUVI}. Moreover, let $\mathbf{u}$ be a standard $d$-dimensional Gaussian random variable, and let $(\mathbf{u}_n)_{n \in \mathbb{N}}$ be a sequence of i.i.d. standard $d$-dimensional Gaussian random variables. In addition, assume $|\theta_0| \in L^4$.  Let  $H: \mathbb{R}^d \times \mathbb{R}^d \rightarrow \mathbb{R}^d$ be the stochastic gradient of \eqref{expressionUVI} given by 
\begin{align}\label{HexpressionVI}
\begin{split}
H(\theta, u) & = \frac{\theta}{2} + \frac{u}{4}-\frac{3}{4}\hat{a}+ \frac{\hat{a}}{4}\left(\frac{7}{1+e^{2\hat{a}^{\mathsf{T}}(u/4+\theta)}}-\frac{1}{1+e^{2\hat{a}^{\mathsf{T}}(u/4-\theta)}}\right)\\
&\quad+ \frac{1}{8}\sum_{i = 1}^n\left( -6z_i y_i +\frac{7z_i}{1+e^{-z_i^{\mathsf{T}}(u/4+\theta)}}-\frac{ z_i}{1+e^{-z_i^{\mathsf{T}}(u/4-\theta)}}\right)\\
&\quad -\frac{7(u+8\theta)}{16(1+e^{2(\theta^{\mathsf{T}}u/4+|\theta|^2)})}-\frac{u-8\theta}{16(1+e^{2(\theta^{\mathsf{T}}u/4-|\theta|^2)})}.
\end{split}
\end{align} 
Then, $H$ satisfies Assumptions \ref{iid}, \ref{loclip}, and \ref{assum:dissipativity}. More precisely, Assumption \ref{loclip} holds with $L_1 = 1$, $L_2 = 1/4$, $\eta(u)  = 9/2+8e^{|u|^2/32}+\sum_{i = 1}^n|z_i|^2+4|\hat{a}|^2+3|u|^2/8$, and moreover, $\E[(1+|\mathbf{u}|)^4\eta^4(\mathbf{u})]<\infty$.
Assumption \ref{assum:dissipativity} holds with $A(u) = \mathbf{I}_d /4$ and $\hat{b}(u)  =   (9| u|^2/4+121|\hat{a}|^2/4)+49n\sum_{i = 1}^n|z_i|^2/8+7n/4$. 
\end{proposition}
The proof of Proposition \ref{prop:VI} is postponed to Appendix \ref{proof:examplemain}. Moreover, the meaning of this result is that we can verify, in a practical variational inference example, that our local assumptions are satisfied, therefore in this instance of nonconvex optimization problem, the theoretical guarantees we provide in this paper regarding SGLD provably hold. This is a significant improvement over previous results which are based on global Lipschitz assumptions, which fail to hold for this simple, yet very illustrative, example.

\subsection{Index tracking}\label{sec:IndTra}
We consider the problem of index tracking, which can be formulated precisely as follows (see, e.g., \cite[Eqn. (3), (4), and (8)]{gaivoronski2005optimal}):
\begin{equation}\label{eq:itob}
\min_{\theta} U(\theta)  := \min_{\theta} \left(\E\left[\left(Y -  \sum_{i = 1}^N g_i(\theta) X_{i}\right)^2\right]+\hat{\eta}|\theta|^2\right),
\end{equation}
where $\theta \in \R^N$, $U: \R^N \rightarrow \R$, and $Z= (Y,X_1, \dots, X_N)$ is an $\R^{N+1}$-valued random variable with $Y \in \R$ denoting the return of the target index, $X_i \in \R, i = 1, \dots, N$ denoting the return of the $i$-th asset. Moreover, $\hat{\eta} >0$ is the regularization constant, and $g_i(\theta)$ represents the weight of asset $i$ in the portfolio given explicitly by $g_i(\theta) = \frac{e^{\theta_i}}{\sum_{k=1}^N e^{\theta_k}}$, for all $\theta \in \R^N$. One notes that for any $\theta \in \R^N$, $i = 1, \dots, N$, $g_i(\theta) \in (0,1)$. Moreover, for any $\theta \in \R^N$, $m = 1, \dots, N$, one obtains
\begin{align}\label{eq:itobgrad}
\partial_{\theta_m}U(\theta)  = 2\hat{\eta} \theta_m+2\E\left[ \left(Y -  \sum_{i = 1}^N g_i(\theta) X_{i}\right) g_m(\theta)\sum_{i \neq m}^N g_i(\theta)(X_i- X_m)\right].
\end{align}
\begin{proposition}\label{prop:IndTrasg} Let the objective function $U$ be defined in \eqref{eq:itob}. Denote by $\mathcal{L}(Z)$ the probability law of $Z$. Let $(Z_n)_{n \in \mathbb{N}}$ be a sequence of i.i.d. random variables with probability law $\mathcal{L}(Z)$. Assume $|\theta_0| \in L^4$, and $|Z| \in L^{12}$. Moreover, let $H: \R^N \times \R^{N+1} \rightarrow \R^N$ be the stochastic gradient of \eqref{eq:itob} given by
\begin{equation}\label{eqn:itsg}
H(\theta, z) := (H_1(\theta,z), \dots, H_N(\theta,z)),
\end{equation}
where $z := (y, x_1, \dots, x_N) \in \R^{N+1}$, $H_m:  \R^N  \times \R^{N+1} \rightarrow \R $, $m = 1, \dots, N$. The explicit expressions for $H_m$, $m = 1, \dots, N$ are given as follows:
\begin{align}\label{eqn:itsgexp}
H_m(\theta,z) 
& = 2\hat{\eta} \theta_m +2\left(y -  \sum_{i = 1}^N g_i(\theta) x_{i}\right) g_m(\theta)\sum_{i \neq m}^N g_i(\theta)(x_i- x_m).
\end{align}
Then, the following holds:
\begin{enumerate}
\item The function $U$ is in general nonconvex. 
\item The stochastic gradient $H$ satisfies Assumptions \ref{iid}, \ref{loclip}, and \ref{assum:dissipativity}. More precisely, Assumption \ref{loclip} holds with $L_1  =6N $, $L_2  =   4\sqrt{N}(N+1)$, $\eta(z)  =\hat{\eta}+ \left(1+|y|+\sum_{i = 1}^N|x_i|\right)\left(1 +\sum_{i \neq m}^N(|x_i|+|x_m|)\right)$. Assumption \ref{assum:dissipativity} holds with $A(z)  = \hat{\eta}\mathbf{I}_N $ and $\hat{b}(z)  =  \hat{\eta}^{-1}N\left( \left(|y|+\sum_{i = 1}^N|x_i|\right)\sum_{i \neq m}^N(|x_i|+|x_m|)\right)^2$. 
\end{enumerate}
\end{proposition}
The proof of Proposition \ref{prop:IndTrasg} is postponed to Appendix \ref{proof:examplemain}. One notes that  the stochastic gradient  $H(\theta, z)$ fails to satisfy the global conditions as the index tracking optimization \eqref{eq:itob} can be viewed as an online optimization problem where a uniform bound of the data $z$ is unavailable. However, it is shown in Proposition \ref{prop:IndTrasg} that, for any $\theta \in \R^N, z \in \R^{N+1}$, $H(\theta,z)$ satisfies the local Lipschitz condition (Assumption \ref{loclip}) and the local dissipativity condition (Assumption \ref{assum:dissipativity}). Moreover, Proposition \ref{prop:IndTrasg} implies that our main results hold for the nonconvex optimization problem \eqref{eq:itob}, which provide theoretical guarantees for the SGLD algorithm to find the approximate minimizers.

\section{Proof Overview}\label{proofoverview}
In this section, we explain the main idea of proving Theorem \ref{main} and Corollary \ref{cw2}. We proceed by introducing suitable Lyapunov functions for the analysis of moment estimates of the SGLD algorithm \eqref{eq:discreteTimeSGLD}. This is done with the help of a continuous-time interpolation of the original recursion \eqref{eq:discreteTimeSGLD}, which results in a continuous-time process whose laws at discrete times are the same as those of the SGLD. We further introduce a couple of auxiliary continuous-time process, which are used for the derivation of preliminary results. The proof of the main results then follows. We defer all proofs to Appendix \ref{proof:sec24} and focus on the exposition of main ideas.
\subsection{Introduction of suitable Lyapunov functions and auxiliary processes}
We start by defining, for each $p\geq 1$, the Lyapunov function $V_p$ by $V_p(\theta):=(1+|\theta|^2)^{p/2},\ \theta\in\mathbb{R}^d$, and similarly $\operatorname{v}_p (\omega):= (1+\omega^2)^{p/2}$, for any real $\omega\ge0$. Notice that these functions are twice continuously differentiable and 
\begin{equation}\label{contrassumption}
\sup_{\theta} (|\nabla V_p(\theta)|/ V_p(\theta)) <\infty, \quad \lim_{|\theta|\to\infty} (\nabla V_p(\theta)/V_p(\theta))=0.
\end{equation} 
Let $\mathcal{P}_{\, V_p}$ denote the set of $\mu\in\mathcal{P}(\mathbb{R}^d)$ satisfying $\int_{\mathbb{R}^d}V_p(\theta)\,\mu(\rmd \theta)<\infty$. 

Consider the Langevin SDE $(Z_t)_{t\in\bR_+}$ given by
\begin{equation}\label{sde}
\rmd Z_t := -  h(Z_t) \rmd t + \sqrt{2 \beta^{-1}} \rmd B_t
\end{equation}
with $Z_0:=\theta_0 \in \R^d $, where $h: = \nabla U$ and $(B_t)_{t \ge 0}$ is a standard $d$-dimensional Brownian motion. Denote by  $(\mathcal{F}_t)_{t \geq 0}$ the natural filtration of $(B_t)_{t \geq 0}$, and we assume that $(\mathcal{F}_t)_{t \geq 0}$ is independent of $\mathcal{G}_{\infty}\vee \sigma(\theta_0)$. Moreover, denote by $\mathcal{F}_{\infty} := \sigma(\bigcup_{t \geq 0} \mathcal{F}_t)$.

We next introduce the auxiliary processes which are used in our analysis. For each $\lambda>0$, $Z^{\lambda}_t:=Z_{\lambda t},\ t\in\mathbb{R}_+$, where the process $(Z_t)_{t\in\bR_+}$ is defined in \eqref{sde}. We also define $\tilde{B}^{\lambda}_t:=B_{\lambda t}/\sqrt{\lambda}$, $t \geq 0$. We note that $(\tilde{B}_t^\lambda)_{t \geq 0}$ is a Brownian motion and
\[
\rmd Z^{\lambda}_t:=-\lambda h(Z^{\lambda}_t)\, \rmd t+\sqrt{2\lambda\beta^{-1}} \rmd \tilde{B}^{\lambda}_t,\quad
Z^{\lambda}_0:=\theta_0.
\]
The natural filtration of $(\tilde{B}^{\lambda}_t)_{t \geq 0}$ is denoted by $(\mathcal{F}_t^{\lambda})_{t\geq 0}$ with $\mathcal{F}_t^{\lambda}:=\mathcal{F}_{\lambda t}$, $t\in\mathbb{R}_+$. Note that $(\mathcal{F}_t^{\lambda})_{t \geq 0}$ is independent of $\mathcal{G}_{\infty}\vee \sigma(\theta_0)$. 

Then, define the continuous-time interpolation of the SGLD algorithm \eqref{eq:discreteTimeSGLD} as
\begin{equation}\label{SGLDprocess}
\rmd \bar{\theta}^{\lambda}_t:=-\lambda H(\bar{\theta}^{\lambda}_{\lfloor t\rfloor},{X}_{\lceil t\rceil})\, \rmd t
+ \sqrt{2\lambda\beta^{-1}} \rmd \tilde{B}^{\lambda}_{t}, \quad \bar{\theta}^{\lambda}_0:=\theta_0.
\end{equation}
In addition, 
one notes the law of the interpolated process coincides with the law of the SGLD algorithm \eqref{eq:discreteTimeSGLD} at grid-points, i.e. $\mathcal{L}(\bar{\theta}^{\lambda}_n):=\mathcal{L}(\theta_n^{\lambda})$, for each $n\in\mathbb{N}$. Hence, crucial estimates for the SGLD can be derived by studying  equation \eqref{SGLDprocess}. 

Furthermore, consider a continuous-time process $ \zeta^{s,v, \lambda}_t$, $t\geq s$, which denotes the solution of the SDE
\begin{equation*}
\rmd \zeta^{s,v, \lambda}_t:= -\lambda h(\zeta^{s,v, \lambda}_t) \rmd t + \sqrt{2\lambda\beta^{-1}} \rmd \tilde{B}_t^{\lambda}, \quad \zeta^{s,v, \lambda}_s := v \in \mathbb{R}^d.
\end{equation*}
\begin{definition}\label{zetaprocess} Fix $n \in \nset$. For any $t \geq nT$, define $\bar{\zeta}_t^{\lambda,n} := \zeta^{nT, \bar{\theta}^{\lambda}_{nT}, \lambda}_t$, where $T := \floor{{1}/{\lambda}}$.
\end{definition}
Intuitively, $\bar{\zeta}_t^{\lambda,n}$ is a process started from the value of the SGLD process \eqref{SGLDprocess} at time $nT$ and run until time $t \geq nT$ with the continuous-time Langevin dynamics.

\subsection{Preliminary estimates}\label{mesec}
It is a classic result that SDE \eqref{sde} has a unique solution adapted to $(\mathcal{F}_t)_{t\in\mathbb{R}_+}$, since $h$ is Lipschitz-continuous by \eqref{mulyan}. Note that the second moments of the SDE \eqref{sde}  and of the target distribtuion $\pi_{\beta}$ are finite, see \cite[Lemma~3.2]{raginsky} and \cite[Proposition 1-(ii)]{aew}. For results of SDEs under local Lipschitz conditions, see, e.g. \cite{mshloc} and \cite{chjloc}. 

In order to obtain the convergence results, we first establish the moment bounds of the process $(\bar{\theta}^{\lambda}_t)_{t\geq 0}$. This result proves that second and fourth moments of the process $(\bar{\theta}^{\lambda}_t)_{t\geq 0}$ are uniformly bounded, therefore well-behaved.

\begin{lemma}
\label{lem:moment_SGLD_2p}
Let Assumptions \ref{iid}, \ref{loclip} and \ref{assum:dissipativity} hold. For any $0<  \lambda < \lambda_{\max}$ given in \eqref{eq:definition-lambda-max}, $n\in\mathbb{N}$, $t \in (n,n+1]$,
\[
\mathbb{E} \left[|\bar{\theta}^{\lambda}_t|^2\right] \leq
(1-  a\lambda (t-n) )(1-a\lambda)^n \mathbb{E}\left[ |\theta_0|^2\right] +c_1 (\lambda_{\max}+a^{-1}) \, ,
\]
where
\begin{align}\label{2ndmomentconst}
\begin{split}
c_1 &:= c_0+ 2d/\beta, \quad c_0 := 4\lambda_{\max}L_2^2\mathbb{E}\left[\bar{\eta}^2(X_0)\right] +4\lambda_{\max}H_\star^2 +2 b.
\end{split}
\end{align}
In addition, $\sup_t\mathbb{E} \left[|\bar{\theta}^{\lambda}_t|^2 \right]\leq \mathbb{E} \left[|\theta_0|^2\right] +c_1 (\lambda_{\max}+a^{-1})< \infty.$ Similarly, one obtains
\[
\mathbb{E}\left[|\bar{\theta}^{\lambda}_t|^4\right] \leq (1-  a\lambda (t-n) )(1-a\lambda)^n\mathbb{E}\left[| \theta_0|^4\right] +c_3 (\lambda_{\max}+a^{-1}),
\]
where
\begin{align}\label{4thmomentconst}
\begin{split}
c_3 	&:=  (1+a\lambda_{\max})c_2+12d^2\beta^{-2}(\lambda_{\max}+9a^{-1}),\\
c_2 	&:= 4bM^2 + 152(1+\lambda_{\max})^3 \\
&\quad \times\left((1+L_2)^4\mathbb{E}\left[(1+\bar{\eta}(X_0))^4\right]+(1+H_\star)^4\right)(1+M)^2,\\
M 		&:= \max\{(8ba^{-1}+48a^{-1}\lambda_{\max}(L_2^2\mathbb{E}\left[\bar{\eta}^2(X_0)\right]+H_\star^2))^{1/2},\\
		&\hspace{4em} (128a^{-1}\lambda_{\max}^2(L_2^3\mathbb{E}\left[\bar{\eta}^3(X_0)\right]+H_\star^3))^{1/3}\}.
\end{split}
\end{align}
Moreover, this implies $\sup_t\mathbb{E} |\bar{\theta}^{\lambda}_t|^4 < \infty$.
\end{lemma}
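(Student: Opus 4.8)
The plan is to exploit the piecewise structure of \eqref{SGLDprocess}: on each interval $t\in(n,n+1]$ one has $\lfloor t\rfloor=n$ and $\lceil t\rceil=n+1$, so
\[
\bar\theta^\lambda_t=\bar\theta^\lambda_n-\lambda(t-n)H(\bar\theta^\lambda_n,X_{n+1})+\sqrt{2\lambda\beta^{-1}}\,(\tilde B^\lambda_t-\tilde B^\lambda_n),
\]
where, since $(X_n)$ is i.i.d., $X_{n+1}$ is independent of $\bar\theta^\lambda_n$, and the Brownian increment is independent of $\sigma(\bar\theta^\lambda_n,X_{n+1})$ by the standing independence hypotheses. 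I would first treat the second moment. Expanding $|\bar\theta^\lambda_t|^2$ and taking conditional expectation given $\sigma(\bar\theta^\lambda_n)$: the cross terms involving the Brownian increment vanish by independence and zero mean, the increment itself contributes $2\lambda(t-n)d/\beta$, averaging over $X_{n+1}$ turns the pointwise bound of Assumption \ref{assum:dissipativity} into $\E[\langle\bar\theta^\lambda_n,H(\bar\theta^\lambda_n,X_{n+1})\rangle\mid\sigma(\bar\theta^\lambda_n)]=\langle\bar\theta^\lambda_n,h(\bar\theta^\lambda_n)\rangle\ge a|\bar\theta^\lambda_n|^2-b$, and \eqref{growthsup} together with $(\alpha+\gamma+\delta)^2\le2\alpha^2+4\gamma^2+4\delta^2$ bounds $\E[|H(\bar\theta^\lambda_n,X_{n+1})|^2\mid\sigma(\bar\theta^\lambda_n)]\le 2L_1^2\E[\eta^2(X_0)]|\bar\theta^\lambda_n|^2+4L_2^2\E[\bar\eta^2(X_0)]+4H_\star^2$. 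Collecting terms, with $u:=t-n\in(0,1]$ and using $\lambda u\le\lambda_{\max}$ to dominate the $\lambda^2u^2$ coefficients, one obtains the conditional recursion $\E[|\bar\theta^\lambda_t|^2\mid\sigma(\bar\theta^\lambda_n)]\le(1-2a\lambda u+2L_1^2\E[\eta^2(X_0)]\lambda^2u^2)|\bar\theta^\lambda_n|^2+\lambda u c_1$, with $c_1$ as in \eqref{2ndmomentconst}.

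Next I would invoke the definition \eqref{eq:definition-lambda-max} of $\lambda_{\max}$ twice. Since $\E[\eta^2(X_0)]\le(\E[(1+\eta(X_0))^4])^{1/2}$ and $\min\{a,a^{1/3}\}\le a$, one checks $2L_1^2\E[\eta^2(X_0)]\lambda u\le a$, so the quadratic coefficient is $\le 1-a\lambda u$; and $\lambda\le1/a$ ensures $1-a\lambda\ge0$. Taking full expectations gives the sub-interval recursion $\E|\bar\theta^\lambda_t|^2\le(1-a\lambda u)\E|\bar\theta^\lambda_n|^2+\lambda uc_1$. Setting $u=1$ yields $\E|\bar\theta^\lambda_{n+1}|^2\le(1-a\lambda)\E|\bar\theta^\lambda_n|^2+\lambda c_1$; iterating from $n=0$ and summing the geometric series gives $\E|\bar\theta^\lambda_n|^2\le(1-a\lambda)^n\E|\theta_0|^2+c_1/a$. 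Substituting this back into the sub-interval recursion for arbitrary $t\in(n,n+1]$, the additive terms telescope exactly to $c_1/a\le c_1(\lambda_{\max}+a^{-1})$, which is the claimed bound; the uniform estimate follows from $(1-a\lambda(t-n))(1-a\lambda)^n\le1$.

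For the fourth moment I would run the same scheme on $|\bar\theta^\lambda_t|^4=(|\bar\theta^\lambda_t|^2)^2$, expanding conditionally on $\sigma(\bar\theta^\lambda_n)$ via $\sigma(\bar\theta^\lambda_n,X_{n+1})$. Using the Gaussian moments $\E|\tilde B^\lambda_t-\tilde B^\lambda_n|^2=ud$, $\E|\tilde B^\lambda_t-\tilde B^\lambda_n|^4=u^2d(d+2)\le3u^2d^2$, and the vanishing of odd Brownian moments, the only contractive contribution is again $-4\lambda u|\bar\theta^\lambda_n|^2\langle\bar\theta^\lambda_n,h(\bar\theta^\lambda_n)\rangle\le-4a\lambda u|\bar\theta^\lambda_n|^4+4\lambda ub|\bar\theta^\lambda_n|^2$; every remaining term is either $O(\lambda^2)$ or of the form (const)$\cdot\lambda u|\bar\theta^\lambda_n|^k$ with $k\in\{1,2,3\}$, which I would control via \eqref{growthsup}, the finiteness of $\E[\eta^4(X_0)]$ and $\E[\bar\eta^4(X_0)]$ from Assumption \ref{iid}, and Young's inequalities $|\bar\theta^\lambda_n|^k\le\varepsilon|\bar\theta^\lambda_n|^4+C_{\varepsilon,k}$ with $\varepsilon$ tuned so that the net $|\bar\theta^\lambda_n|^4$ coefficient stays $\le1-a\lambda u$ over the whole range $\lambda\le\lambda_{\max}$ (the factor $1+a\lambda_{\max}$ and the term $12d^2\beta^{-2}(\lambda_{\max}+9a^{-1})$ in \eqref{4thmomentconst} arise from these Young constants, from $\lambda^2u^2\le\lambda_{\max}\lambda u$, and from the Gaussian fourth moment). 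This produces $\E[|\bar\theta^\lambda_t|^4\mid\sigma(\bar\theta^\lambda_n)]\le(1-a\lambda u)|\bar\theta^\lambda_n|^4+\lambda uc_3$, and the identical iteration-and-telescoping argument delivers the stated fourth-moment bound and, since $\E|\theta_0|^4<\infty$ by Assumption \ref{iid}, its uniform finiteness.

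The conceptual steps are standard; the main obstacle is the fourth-moment bookkeeping — keeping track of which power of $\lambda$ and which $X_0$-moment each term carries, and choosing the Young weights so that simultaneously the contraction coefficient $1-a\lambda u$ is preserved and the additive constant collapses to exactly the $c_2,c_3$ of \eqref{4thmomentconst}. The precise shape of $\lambda_{\max}$ in \eqref{eq:definition-lambda-max}, in particular the presence of $\min\{a,a^{1/3}\}$ and of the $L^4$-type norm $(\E[(1+\eta(X_0))^4])^{1/2}$ rather than an $L^2$ one, is exactly what makes the quadratic-in-$\lambda$ error in the $L^2$ estimate and the analogous higher-order errors in the $L^4$ estimate absorbable into the contractive term.
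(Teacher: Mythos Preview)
Your proposal is correct and follows essentially the same approach as the paper: expand $|\bar\theta^\lambda_t|^2$ (resp.\ $|\bar\theta^\lambda_t|^4$) conditionally on $\bar\theta^\lambda_n$, use independence of the Brownian increment and of $X_{n+1}$, apply the dissipativity and growth bounds, and iterate the resulting one-step contraction. The only cosmetic difference is in the fourth-moment bookkeeping: where you absorb the lower-order powers $|\bar\theta^\lambda_n|^k$, $k\in\{1,2,3\}$, via Young's inequality with small $\varepsilon$, the paper instead performs a case split on $\{|\bar\theta^\lambda_n|>M\}$ versus $\{|\bar\theta^\lambda_n|\le M\}$ for an explicit threshold $M$; both devices serve the same purpose and lead to the same constants $c_2,c_3$.
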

The uniform bound achieved in Lemma~\ref{lem:moment_SGLD_2p} for the fourth moment of the process $(\bar{\theta}_t^\lambda)_{t\geq 0}$ enables us to obtain a uniform bound for $V_4(\theta_t^{\lambda})$ as presented in the following corollary.

\begin{corollary}\label{corr:Moments} Let Assumptions \ref{iid}, \ref{loclip} and \ref{assum:dissipativity} hold. For any $0< \lambda < \lambda_{\max}$ given in \eqref{eq:definition-lambda-max}, $n\in\mathbb{N}$, $t \in (n,n+1]$,
\begin{align*}
\E[V_4(\bar{\theta}^{\lambda}_t)] \leq 2(1 - a\lambda)^\floor{t} \E[V_4(\theta_0)]  + 2  c_3  (\lambda_{\textnormal{max}} + a^{-1})+2,
\end{align*}
where $c_3$ is given in \eqref{4thmomentconst}.
\end{corollary}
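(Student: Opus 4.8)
The plan is to derive this directly from the fourth-moment bound in Lemma~\ref{lem:moment_SGLD_2p} together with two elementary inequalities comparing $V_4$ with $|\cdot|^4$. Since $V_4(\theta)=(1+|\theta|^2)^2$, the inequality $(a+b)^2\le 2a^2+2b^2$ applied with $a=1$, $b=|\theta|^2$ gives $V_4(\theta)\le 2+2|\theta|^4$ for all $\theta\in\mathbb{R}^d$; conversely $|\theta_0|^4\le(1+|\theta_0|^2)^2=V_4(\theta_0)$, so that $\E[|\theta_0|^4]\le\E[V_4(\theta_0)]$. Taking expectations in the first inequality, $\E[V_4(\bar{\theta}^{\lambda}_t)]\le 2+2\,\E[|\bar{\theta}^{\lambda}_t|^4]$, which reduces the claim to the fourth-moment estimate already established.

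Next I would insert the bound from Lemma~\ref{lem:moment_SGLD_2p}, namely
$\E[|\bar{\theta}^{\lambda}_t|^4]\le(1-a\lambda(t-n))(1-a\lambda)^n\E[|\theta_0|^4]+c_3(\lambda_{\max}+a^{-1})$
for $t\in(n,n+1]$, and then control the transient factor $(1-a\lambda(t-n))(1-a\lambda)^n$ by $(1-a\lambda)^{\floor{t}}$. This is the only point requiring a small case analysis: for $0<\lambda<\lambda_{\max}\le 1/a$ we have $0<a\lambda<1$, and for $t\in(n,n+1)$ one has $\floor{t}=n$ and $1-a\lambda(t-n)\in[0,1)$, so the factor is at most $(1-a\lambda)^n=(1-a\lambda)^{\floor{t}}$; while for $t=n+1$, $t-n=1$ gives $1-a\lambda(t-n)=1-a\lambda$, hence the factor equals $(1-a\lambda)^{n+1}=(1-a\lambda)^{\floor{t}}$. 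In either case $(1-a\lambda(t-n))(1-a\lambda)^n\le(1-a\lambda)^{\floor{t}}$.

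Combining the three steps yields
\[
\E[V_4(\bar{\theta}^{\lambda}_t)]\le 2+2\left((1-a\lambda)^{\floor{t}}\E[|\theta_0|^4]+c_3(\lambda_{\max}+a^{-1})\right)\le 2(1-a\lambda)^{\floor{t}}\E[V_4(\theta_0)]+2c_3(\lambda_{\max}+a^{-1})+2,
\]
using $\E[|\theta_0|^4]\le\E[V_4(\theta_0)]$ in the last step, which is exactly the asserted bound. I do not expect any genuine obstacle here; the only mild subtlety is the treatment of the right endpoint $t=n+1$ in the floor comparison, and the fact that one must invoke $\lambda_{\max}\le 1/a$ (built into \eqref{eq:definition-lambda-max}) to guarantee $1-a\lambda>0$ so that the powers $(1-a\lambda)^{\floor{t}}$ behave monotonically.
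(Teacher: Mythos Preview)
Your proposal is correct and is exactly the natural derivation the paper intends: the Corollary is stated without proof immediately after Lemma~\ref{lem:moment_SGLD_2p}, and your argument---combining $V_4(\theta)\le 2+2|\theta|^4$, $|\theta_0|^4\le V_4(\theta_0)$, and the fourth-moment bound from the Lemma, together with the observation $(1-a\lambda(t-n))(1-a\lambda)^n\le(1-a\lambda)^{\floor{t}}$---is precisely how one fills in the details. One trivial cosmetic remark: for $t\in(n,n+1)$ the factor $1-a\lambda(t-n)$ actually lies in $(1-a\lambda,1)$ rather than $[0,1)$, but only the upper bound $\le 1$ is needed and your conclusion is unaffected.
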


Next, we turn our attention to the process $(\bar{\zeta}^{\lambda,n}_t)_{t\in\bR}$. 
We first present a drift condition associated with the SDE \eqref{sde}, which will be used to obtain the moment bounds of the process $\bar{\zeta}^{\lambda,n}_t$.
\begin{lemma}[{\cite[Lemma~3.6]{nonconvex}}]\label{lem:PreLimforDriftY}
Let Assumptions~\ref{iid} and \ref{assum:dissipativity} hold. Then, for each $p\geq 2$, $\theta \in \rset^d$, 
\begin{align*}
\Delta V_p(\theta)/\beta- \langle h(\theta), \nabla V_p(\theta) \rangle \leq -\bar{c}(p) V_p(\theta) + \tilde{c}(p), 
\end{align*}
where $\bar{c}(p):= ap/4$ and $ \tilde{c}(p) := (3/4) a p \mathrm{v}_p(\overline{M}_p)$ with $\overline{M}_p := (1/3 + 4b/(3a) + 4d/(3a\beta) + 4(p-2)/(3a\beta))^{1/2}.$
\end{lemma}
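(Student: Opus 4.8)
The plan is a direct Lyapunov computation followed by an elementary one–variable estimate. First I would compute the gradient and Laplacian of $V_p(\theta)=(1+|\theta|^2)^{p/2}$: since $V_p$ depends on $\theta$ only through $r:=|\theta|^2$, writing $g(r)=(1+r)^{p/2}$ gives $\nabla V_p(\theta)=2g'(r)\theta=p(1+|\theta|^2)^{p/2-1}\theta$ and $\Delta V_p(\theta)=4|\theta|^2 g''(r)+2d\,g'(r)=p(p-2)|\theta|^2(1+|\theta|^2)^{p/2-2}+pd(1+|\theta|^2)^{p/2-1}$. Bounding $|\theta|^2\le 1+|\theta|^2$ in the first summand (its coefficient $p(p-2)$ being nonnegative since $p\ge 2$) yields $\Delta V_p(\theta)\le p(p-2+d)(1+|\theta|^2)^{p/2-1}$.

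Next I would bring in the dissipativity of $h$ recorded in the remark after Assumption~\ref{assum:dissipativity}, namely $\langle h(\theta),\theta\rangle\ge a|\theta|^2-b$, so that $\langle h(\theta),\nabla V_p(\theta)\rangle=p(1+|\theta|^2)^{p/2-1}\langle h(\theta),\theta\rangle\ge p(1+|\theta|^2)^{p/2-1}(a|\theta|^2-b)$. Combining this with the Laplacian bound, writing $a|\theta|^2-b=a(1+|\theta|^2)-(a+b)$, and collecting the common factor $(1+|\theta|^2)^{p/2-1}$, I arrive (with $R:=1+|\theta|^2\ge 1$) at
\[
\frac{\Delta V_p}{\beta}-\langle h(\theta),\nabla V_p(\theta)\rangle\le -pa\,R^{p/2}+p\Big(a+b+\tfrac{d+p-2}{\beta}\Big)R^{p/2-1}.
\]

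It then remains to absorb the lower-order term into a constant. Setting $C:=a+b+(d+p-2)/\beta$ and $M:=\tfrac{4C}{3a}$, a short algebraic check gives the identity $M=1+\overline{M}_p^2$, whence $\mathrm{v}_p(\overline{M}_p)=M^{p/2}$, $\tilde c(p)=\tfrac34 ap\,M^{p/2}$, and $C=\tfrac34 aM$. With these substitutions the asserted inequality with $\bar c(p)=ap/4$ becomes, after moving $-\tfrac{ap}{4}R^{p/2}$ to the left and dividing by $\tfrac34 ap>0$, equivalent to the scalar bound $R^{p/2-1}(M-R)\le M^{p/2}$. I would finish by splitting cases: if $R\ge M$ the left side is $\le 0\le M^{p/2}$; if $0<R<M$, then since $p\ge 2$ the map $R\mapsto R^{p/2-1}$ is nondecreasing, so $R^{p/2-1}(M-R)\le M^{p/2-1}\cdot M=M^{p/2}$.

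The hard part is bookkeeping rather than analysis: the computation only closes because of the identity $1+\overline{M}_p^2=\tfrac{4}{3a}\big(a+b+(d+p-2)/\beta\big)$, which is exactly how the constant $\overline{M}_p$ is engineered, and one must keep track of the fact that the hypothesis $p\ge 2$ is used twice — once so that $p(p-2)\ge 0$ in the Laplacian estimate, and once, crucially, for the monotonicity of $R\mapsto R^{p/2-1}$ in the final step.
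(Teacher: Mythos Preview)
Your proof is correct; the paper does not supply its own proof but merely cites \cite[Lemma~3.6]{nonconvex}, and your direct Lyapunov computation --- computing $\nabla V_p$, $\Delta V_p$, invoking the averaged dissipativity $\langle h(\theta),\theta\rangle\ge a|\theta|^2-b$, and reducing to the scalar inequality $R^{p/2-1}(M-R)\le M^{p/2}$ via the identity $1+\overline{M}_p^2=\tfrac{4}{3a}(a+b+(d+p-2)/\beta)$ --- is exactly the standard argument one would expect in the cited reference. Your bookkeeping of the constants matches the statement precisely.
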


The following lemma provides explicit upper bounds for $V_p(\bar{\zeta}_t^{\lambda,n})$ in the case $p = 2$ and $p=4$.
\begin{lemma}\label{zetaprocessmoment} Let Assumptions \ref{iid}, \ref{loclip} and \ref{assum:dissipativity} hold.  For any $0< \lambda < \lambda_{\max}$ given in \eqref{eq:definition-lambda-max}, $t \geq nT$, $n\in \nset$, one obtains the following inequality
\begin{align*}
\E[V_2(\bar{\zeta}_t^{\lambda,n})] &\leq e^{-a\lambda  t/2}   \E[V_2(\theta_0)] +3 \mathrm{v}_2(\overline{M}_2)+c_1(\lambda_{\max}+a^{-1})+1,
\end{align*}
where the process $\bar{\zeta}_t^{\lambda,n}$ is defined in Definition \ref{zetaprocess} and $c_1$ is given in \eqref{2ndmomentconst}. Furthermore,
\begin{align*}
\E[V_4(\bar{\zeta}_t^{\lambda,n})] \leq 2e^{-a\lambda  t}   \E[V_4(\theta_0)] +3 \mathrm{v}_4(\overline{M}_4)+2c_3 (\lambda_{\max}+a^{-1})+2,
\end{align*}
where $c_3$ is given in \eqref{4thmomentconst}.
\end{lemma}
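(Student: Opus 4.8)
The plan is to prove the $V_2$ and $V_4$ moment bounds for $\bar{\zeta}_t^{\lambda,n}$ by applying It\^o's formula to $t\mapsto V_p(\bar{\zeta}_t^{\lambda,n})$ along the rescaled Langevin dynamics and then invoking the drift condition from Lemma~\ref{lem:PreLimforDriftY}, followed by Gr\"onwall's inequality and a conditioning argument on the initial slice. Recall that $\bar{\zeta}_t^{\lambda,n}$ solves $\rmd\bar{\zeta}_t^{\lambda,n}=-\lambda h(\bar{\zeta}_t^{\lambda,n})\rmd t+\sqrt{2\lambda\beta^{-1}}\rmd\tilde{B}_t^{\lambda}$ for $t\geq nT$ with $\bar{\zeta}_{nT}^{\lambda,n}=\bar{\theta}_{nT}^{\lambda}$. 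First I would apply It\^o's formula to get, for $t\geq nT$,
\[
\rmd V_p(\bar{\zeta}_t^{\lambda,n})=\lambda\left(\frac{\Delta V_p(\bar{\zeta}_t^{\lambda,n})}{\beta}-\langle h(\bar{\zeta}_t^{\lambda,n}),\nabla V_p(\bar{\zeta}_t^{\lambda,n})\rangle\right)\rmd t+\sqrt{2\lambda\beta^{-1}}\langle\nabla V_p(\bar{\zeta}_t^{\lambda,n}),\rmd\tilde{B}_t^{\lambda}\rangle.
\]
Since $\nabla V_p$ has at most polynomial growth and the process has finite moments of all orders (standard for the Langevin SDE with Lipschitz drift, cf.\ \cite{raginsky}), the stochastic integral is a true martingale, so taking expectations kills it. Then Lemma~\ref{lem:PreLimforDriftY} gives
\[
\frac{\rmd}{\rmd t}\E[V_p(\bar{\zeta}_t^{\lambda,n})]\leq -\lambda\bar{c}(p)\E[V_p(\bar{\zeta}_t^{\lambda,n})]+\lambda\tilde{c}(p),
\]
with $\bar{c}(p)=ap/4$ and $\tilde{c}(p)=(3/4)ap\,\mathrm{v}_p(\overline{M}_p)$.

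Next I would integrate this linear differential inequality via Gr\"onwall from $nT$ to $t$, obtaining
\[
\E[V_p(\bar{\zeta}_t^{\lambda,n})]\leq e^{-\lambda\bar{c}(p)(t-nT)}\E[V_p(\bar{\theta}_{nT}^{\lambda})]+\frac{\tilde{c}(p)}{\bar{c}(p)}\left(1-e^{-\lambda\bar{c}(p)(t-nT)}\right)\leq e^{-\lambda\bar{c}(p)(t-nT)}\E[V_p(\bar{\theta}_{nT}^{\lambda})]+3\,\mathrm{v}_p(\overline{M}_p),
\]
using $\tilde{c}(p)/\bar{c}(p)=3\,\mathrm{v}_p(\overline{M}_p)$. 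Now I plug in the SGLD moment bounds: for $p=2$, Lemma~\ref{lem:moment_SGLD_2p} gives $\E[V_2(\bar{\theta}_{nT}^{\lambda})]=1+\E|\bar{\theta}_{nT}^{\lambda}|^2\leq 1+(1-a\lambda)^{nT}\E|\theta_0|^2+c_1(\lambda_{\max}+a^{-1})\leq (1-a\lambda)^{nT}\E[V_2(\theta_0)]+c_1(\lambda_{\max}+a^{-1})+1$, and for $p=4$ one uses Corollary~\ref{corr:Moments}, $\E[V_4(\bar{\theta}_{nT}^{\lambda})]\leq 2(1-a\lambda)^{nT}\E[V_4(\theta_0)]+2c_3(\lambda_{\max}+a^{-1})+2$. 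To finish, I need the exponential prefactor $e^{-\lambda\bar{c}(p)(t-nT)}(1-a\lambda)^{nT}$ to be dominated by $e^{-a\lambda t/2}$ (for $p=2$) respectively $2e^{-a\lambda t}$ (for $p=4$); this uses $\bar{c}(2)=a/2$, $\bar{c}(4)=a$, and the elementary inequality $(1-a\lambda)^{nT}\leq e^{-a\lambda nT}$, so $e^{-\lambda\bar{c}(p)(t-nT)}(1-a\lambda)^{nT}\leq e^{-\lambda\bar{c}(p)(t-nT)}e^{-a\lambda nT}=e^{-\lambda\bar{c}(p)t}\cdot e^{-(a-\bar{c}(p))\lambda nT}\leq e^{-\lambda\bar{c}(p)t}$ since $a\geq\bar{c}(p)$ for $p\in\{2,4\}$. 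Collecting terms yields exactly the two stated bounds (the factor $2$ in the $V_4$ estimate comes from the $2$ in Corollary~\ref{corr:Moments}).

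One technical point worth handling carefully: the process $\bar{\zeta}_t^{\lambda,n}$ starts from the \emph{random} value $\bar{\theta}_{nT}^{\lambda}$, which is $\mathcal{G}_{nT}\vee\mathcal{F}_{nT}^{\lambda}$-measurable, whereas the Brownian increments driving $\bar{\zeta}^{\lambda,n}$ after time $nT$ must be independent of this initial condition for the It\^o/martingale argument to apply cleanly; this is precisely the independence structure set up in the construction (the rescaled Brownian motion $\tilde{B}^{\lambda}$ has independent increments and the flow of information is arranged so $\bar{\theta}_{nT}^{\lambda}$ depends only on $\tilde{B}^{\lambda}$ up to time $nT$). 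So I would first derive the bound conditionally on $\mathcal{F}_{nT}^{\lambda}\vee\mathcal{G}_{\infty}\vee\sigma(\theta_0)$ — i.e.\ treat $v=\bar{\theta}_{nT}^{\lambda}$ as deterministic in the flow $\zeta^{nT,v,\lambda}$ — obtaining $\E[V_p(\zeta_t^{nT,v,\lambda})]\leq e^{-\lambda\bar{c}(p)(t-nT)}V_p(v)+3\,\mathrm{v}_p(\overline{M}_p)$, and then take the outer expectation over $v=\bar{\theta}_{nT}^{\lambda}$ using the tower property together with Lemma~\ref{lem:moment_SGLD_2p}/Corollary~\ref{corr:Moments}. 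The main obstacle is really just this bookkeeping of filtrations plus the verification that the local martingale is a genuine martingale (justifying that the stopping-time localization argument passes to the limit, which follows from the uniform-in-time polynomial moment bounds on the Langevin diffusion); the rest is the routine Gr\"onwall-and-substitute computation sketched above.
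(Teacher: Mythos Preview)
Your proposal is correct and follows essentially the same approach as the paper: apply It\^o's formula to $V_p(\bar{\zeta}_t^{\lambda,n})$, invoke the drift condition of Lemma~\ref{lem:PreLimforDriftY} to obtain the linear differential inequality, integrate (Gr\"onwall) from $nT$ to $t$, and then substitute the SGLD moment bounds from Lemma~\ref{lem:moment_SGLD_2p}/Corollary~\ref{corr:Moments} together with $(1-a\lambda)^{nT}\leq e^{-a\lambda nT}$ and $\bar{c}(2)=a/2$, $\bar{c}(4)=a$. Your treatment is in fact slightly more careful than the paper's in making explicit the martingale and filtration issues surrounding the random initial value $\bar{\theta}_{nT}^{\lambda}$, which the paper's proof handles implicitly.
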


\subsection{Proof of the main theorems}\label{fop}

We are now ready to establish our main results. Recall that, our goal is to establish a non-asymptotic bound for $W_1(\mathcal{L}(\theta^{\lambda}_n),\pi_{\beta})$. 

We first split $W_1(\mathcal{L}(\theta^{\lambda}_n),\pi_{\beta})$ as follows by using triangle inequality: 
$W_1(\mathcal{L}(\theta^{\lambda}_n),\pi_{\beta}) \leq W_1(\mathcal{L}(\bar{\theta}^{\lambda}_n),\mathcal{L}(Z^{\lambda}_n))+W_1(\mathcal{L}(Z^{\lambda}_n),\pi_{\beta})$. 
We aim at bounding the two terms on the right hand side separately. To achieve this, we first introduce a functional which is crucial to obtain the convergence rate in $W_1$. For any $p\geq 1$,
$ \mu,\nu \in \mathcal{P}_{\, V_p}$,
\begin{equation}\label{w1p}
w_{1,p}(\mu,\nu):=\inf_{\zeta\in\mathcal{C}(\mu,\nu)}\int_{\mathbb{R}^d}\int_{\mathbb{R}^d} [1\wedge |\theta-\theta'|](1+V_p(\theta)+V_p(\theta'))\zeta(\rmd\theta \rmd\theta'),
\end{equation}
and it satisfies trivially $W_1(\mu,\nu)\leq w_{1,p}(\mu,\nu)$. The case $p=2$, i.e. $w_{1,2}$, is used throughout the section.  The result below states a contraction property of $w_{1,2}$.
\begin{proposition} \label{contr} Let Assumptions \ref{iid}, \ref{loclip} and \ref{assum:dissipativity} hold. Let $Z_t'$, $t\in\rset_+$ be the solution of \eqref{sde} with initial condition $Z'_0 = \theta'_0$ which is independent of $\calF_\infty$ and satisfies $|\theta_0'| \in L^2$. Then, 
\[
w_{1,2}(\calL(Z_t),\calL(Z'_t)) \leq \hat{c} e^{-\dot{c} t} w_{1,2}(\calL(\theta_0),\calL(\theta_0')),
\] 
where the constants $\dot{c}$ and $\hat{c}$ are given in Lemma \ref{contractionconst}.
\end{proposition}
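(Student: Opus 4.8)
The plan is to derive the contraction as a direct application of Eberle's reflection-coupling machinery \cite{eberle} (precisely \cite[Theorem~2.2]{eberle}), whose hypotheses are already at our disposal. The drift $h$ is globally Lipschitz with constant $L_1\E[\eta(X_0)]$ by \eqref{mulyan}, so \eqref{sde} admits a unique strong solution for every square-integrable initial law (the assumption $|\theta_0|\in L^2$, together with the independence of $\theta'_0$ from $\calF_\infty$, is what makes the coupling below legitimate), and the Lyapunov inequality
\[
\frac{\Delta V_2}{\beta} - \langle h(\theta), \nabla V_2(\theta)\rangle \leq -\bar c(2)\, V_2(\theta) + \tilde c(2)
\]
holds by Lemma~\ref{lem:PreLimforDriftY} with $p=2$. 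Moreover, the dissipativity of $h$ yields two-point monotonicity at infinity: expanding $\langle h(\theta)-h(\theta'),\theta-\theta'\rangle$ and using $\langle h(\theta),\theta\rangle\geq a|\theta|^2-b$ together with the linear growth of $h$ to control the cross terms, one gets $\langle h(\theta)-h(\theta'),\theta-\theta'\rangle \geq \kappa\,|\theta-\theta'|^2$ for $|\theta-\theta'|$ large enough, with $\kappa>0$. These are exactly the ingredients Eberle's construction requires.

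Concretely, I would couple two solutions $(Z_t,Z'_t)$ of \eqref{sde} started from $\theta_0$ and $\theta'_0$ by reflection while $Z_t\neq Z'_t$, i.e.\ $\rmd Z'_t = -h(Z'_t)\,\rmd t + \sqrt{2\beta^{-1}}\,(I - 2e_te_t^{\mathsf T})\,\rmd B_t$ with $e_t = (Z_t-Z'_t)/|Z_t-Z'_t|$, switching to the synchronous coupling once they meet (after which they stay together, by Lipschitzness of $h$). Writing $r_t := |Z_t-Z'_t|$, Itô's formula gives, before coalescence, $\rmd r_t \leq \langle h(Z'_t)-h(Z_t), e_t\rangle\,\rmd t + 2\sqrt{2\beta^{-1}}\,\rmd W_t$ for a one-dimensional Brownian motion $W_t$. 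Following Eberle, one then constructs a bounded, increasing, concave $f:[0,\infty)\to[0,\infty)$ with $f(0)=0$, $C^2$ away from a finite set, satisfying $f(r)\asymp 1\wedge r$, together with a small weight $\varepsilon>0$, so that
\[
\rho(x,y) := f(|x-y|)\bigl(1 + \varepsilon V_2(x) + \varepsilon V_2(y)\bigr)
\]
has the property that $e^{\dot c t}\rho(Z_t, Z'_t)$ is a supermartingale for a rate $\dot c>0$. The design of $f$ balances the reflection contribution (which dominates and forces contraction in the bounded region where $h$ is not yet dissipative) against the Lyapunov term $-\bar c(2)V_2+\tilde c(2)$ (which supplies the decay for large $|x-y|$), and it is here that the explicit constants $\dot c$ and $\hat c$ of Lemma~\ref{contractionconst} are produced.

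Granting such $f$ and $\varepsilon$, the remainder is routine: the supermartingale property gives $\E[\rho(Z_t,Z'_t)] \leq e^{-\dot c t}\,\E[\rho(\theta_0,\theta'_0)]$ for this coupling, and since $\rho(x,y)\asymp [1\wedge|x-y|]\bigl(1+V_2(x)+V_2(y)\bigr)$ uniformly (using $\varepsilon\leq 1$ and $f(r)\asymp 1\wedge r$), taking the infimum over couplings of $(\mathcal L(\theta_0),\mathcal L(\theta'_0))$ on the right and using the infimum definition of $w_{1,2}$ on the left yields
\[
w_{1,2}(\mathcal L(Z_t), \mathcal L(Z'_t)) \leq \hat c\, e^{-\dot c t}\, w_{1,2}(\mathcal L(\theta_0), \mathcal L(\theta'_0)).
\]
I expect the only genuinely delicate point to be the construction of the concave $f$ and the admissible range of $\varepsilon$ ensuring the supermartingale property globally — in particular handling the non-smoothness of $r\mapsto|x-y|$ near the diagonal, where the reflection coupling degenerates; existence and uniqueness for \eqref{sde}, the Itô computation for $r_t$, and the equivalence of $\rho$ with the $w_{1,2}$-integrand \eqref{w1p} are all standard. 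Since the statement is exactly \cite[Proposition~3.2]{nonconvex}, one may alternatively cite it directly once the hypotheses of \cite[Theorem~2.2]{eberle} — namely the Lipschitz bound \eqref{mulyan} and the drift condition of Lemma~\ref{lem:PreLimforDriftY} — have been verified as above.
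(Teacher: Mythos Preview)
Your proposal is essentially aligned with the paper, which does not prove this proposition at all but simply imports it from \cite[Proposition~3.2]{nonconvex}; the underlying argument there is indeed \cite[Theorem~2.2]{eberle} applied with the global Lipschitz bound \eqref{mulyan} and the Lyapunov drift of Lemma~\ref{lem:PreLimforDriftY}, exactly as you outline and as you acknowledge in your final sentence.

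One point warrants correction. Your claimed ``two-point monotonicity at infinity'' --- that dissipativity plus linear growth force $\langle h(\theta)-h(\theta'),\theta-\theta'\rangle \geq \kappa|\theta-\theta'|^2$ for large $|\theta-\theta'|$ --- is neither needed nor established by the expansion you sketch: after expanding, the cross terms $-\langle h(\theta),\theta'\rangle-\langle h(\theta'),\theta\rangle$ are only controlled by $-2C|\theta||\theta'|$ via linear growth, which can dominate $a(|\theta|^2+|\theta'|^2)$ when the Lipschitz constant exceeds $a$, and large $|\theta-\theta'|$ does not rule this out. Fortunately, \cite[Theorem~2.2]{eberle} is precisely a Harris-type result that dispenses with convexity at infinity: the Lyapunov inequality of Lemma~\ref{lem:PreLimforDriftY} supplies the long-range contraction, while the reflection coupling handles the bounded region using only the one-sided bound $\langle h(\theta)-h(\theta'),\theta-\theta'\rangle \geq -K_1|\theta-\theta'|^2$ with $K_1=L_1\E[\eta(X_0)]$, which does follow from \eqref{mulyan} and is what actually enters the constants in Lemma~\ref{contractionconst}. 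Drop the monotonicity paragraph and the argument is clean.
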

\begin{proof}
One notes that \cite[Assumption 2.1]{eberle} holds with $\kappa = L_1\E[\eta(X_0)]$ due to Remark \ref{rem:BoundsOnH}. \cite[Assumption 2.2]{eberle} holds with $V = V_2$ due to Lemma \ref{lem:PreLimforDriftY}. Moreover, \cite[Assumptions 2.4 and 2.5]{eberle} hold due to \eqref{contrassumption}. Thus, \cite[Theorem 2.2, Corollary 2.3]{eberle} hold under Assumptions~\ref{iid}, \ref{loclip} and \ref{assum:dissipativity}. Then, the desired result can be obtained by using the same argument as in the proof of \cite[Proposition~3.14]{nonconvex}.
\end{proof}

Now recall $T:= \floor{{1}/{\lambda}}$ defined in Definition \ref{zetaprocess}. By using the contraction property provided in Proposition \ref{contr}, one can construct the non-asymptotic bound between $\mathcal{L}(\bar{\theta}^{\lambda}_t)$ and $\mathcal{L}(Z^{\lambda}_t)$ in $W_1$ distance by decomposing the error using the auxiliary process $\bar{\zeta}_t^{\lambda,n}$:
\begin{align}\label{decomposition}
\begin{split}
W_1(\mathcal{L}(\bar{\theta}^{\lambda}_t),\mathcal{L}(Z^{\lambda}_t)) &\leq W_1(\mathcal{L}(\bar{\theta}^{\lambda}_t),\mathcal{L}(\bar{\zeta}_t^{\lambda,n}))  + W_1(\mathcal{L}(\bar{\zeta}_t^{\lambda,n}),\mathcal{L}(Z^{\lambda}_t)).
\end{split}
\end{align}
By the definition of $\lambda_{\max}$ given in \eqref{eq:definition-lambda-max}, we have that $0<\lambda \leq \lambda_{\max} \leq 1$, which implies $1/2 < \lambda T \leq 1$. An upper bound for the first term in \eqref{decomposition} is obtained below.
\begin{lemma}\label{convergencepart1} Let Assumptions \ref{iid}, \ref{loclip} and \ref{assum:dissipativity} hold. For any $0< \lambda < \lambda_{\max}$ given in \eqref{eq:definition-lambda-max}, $t \in (nT, (n+1)T]$,
\begin{align*}
W_2(\mathcal{L}(\bar{\theta}^{\lambda}_t),\mathcal{L}(\bar{\zeta}_t^{\lambda,n}))
\leq   \sqrt{\lambda} (e^{-an/4}\bar{C}_{2,1}\mathbb{E}[V_2(\theta_0)]  +\bar{C}_{2,2})^{1/2},
\end{align*}
where $\bar{C}_{2,1}$ and $\bar{C}_{2,2}$ are given in \eqref{barc2}.
\end{lemma}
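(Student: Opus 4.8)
\noindent\textbf{Proof plan.} The plan is to couple $\bar{\theta}^{\lambda}_t$ and $\bar{\zeta}_t^{\lambda,n}$ \emph{synchronously}, i.e.\ to drive both by the same Brownian motion $\tilde{B}^{\lambda}$ that appears in \eqref{SGLDprocess} and in the SDE defining $\bar{\zeta}_t^{\lambda,n}$ (see Definition \ref{zetaprocess}). Since the two processes agree at time $nT$, namely $\bar{\zeta}_{nT}^{\lambda,n}=\bar{\theta}^{\lambda}_{nT}$, and the noise terms cancel in the difference, the process $\Delta_t:=\bar{\theta}^{\lambda}_t-\bar{\zeta}_t^{\lambda,n}$ is of finite variation and satisfies, for $t\in[nT,(n+1)T]$,
\[
\Delta_t=-\lambda\int_{nT}^{t}\bigl(H(\bar{\theta}^{\lambda}_{\floor{s}},X_{\ceil{s}})-h(\bar{\zeta}_s^{\lambda,n})\bigr)\,\rmd s.
\]
Because $W_1\le W_2\le(\E|\Delta_t|^2)^{1/2}$, it suffices to bound $\E|\Delta_t|^2$. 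Differentiating $|\Delta_t|^2$ by the chain rule and taking expectations gives
\[
\frac{\rmd}{\rmd t}\E|\Delta_t|^2=-2\lambda\,\E\bigl\langle\Delta_t,\,H(\bar{\theta}^{\lambda}_{\floor{t}},X_{\ceil{t}})-h(\bar{\zeta}_t^{\lambda,n})\bigr\rangle.
\]

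Next I would split the inner product into three contributions: (i) the stochastic-gradient noise $H(\bar{\theta}^{\lambda}_{\floor{t}},X_{\ceil{t}})-h(\bar{\theta}^{\lambda}_{\floor{t}})$; (ii) the drift time-discretisation error $h(\bar{\theta}^{\lambda}_{\floor{t}})-h(\bar{\theta}^{\lambda}_t)$; and (iii) the genuine gradient gap $h(\bar{\theta}^{\lambda}_t)-h(\bar{\zeta}_t^{\lambda,n})$. For (iii), the global Lipschitz bound \eqref{mulyan} gives $\langle\Delta_t,h(\bar{\theta}^{\lambda}_t)-h(\bar{\zeta}_t^{\lambda,n})\rangle\le L_1\E[\eta(X_0)]\,|\Delta_t|^2$, which is the Grönwall feedback term. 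For (ii), again by \eqref{mulyan} together with Young's inequality one has $|h(\bar{\theta}^{\lambda}_{\floor{t}})-h(\bar{\theta}^{\lambda}_t)|\le L_1\E[\eta(X_0)]\,|\bar{\theta}^{\lambda}_{\floor{t}}-\bar{\theta}^{\lambda}_t|$, and the one-step increment $\bar{\theta}^{\lambda}_{\floor{t}}-\bar{\theta}^{\lambda}_t$ has second moment of order $\lambda$: this follows by combining the growth bound \eqref{growthsup}, Assumption \ref{iid} and the uniform moment estimates of Lemma \ref{lem:moment_SGLD_2p}/Corollary \ref{corr:Moments} (for the drift contribution) with the variance of a Gaussian increment over a $\tilde{B}^{\lambda}$-time step of length at most $1$ (for the noise contribution).

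The delicate term is (i), because $\Delta_t$ at a non-integer time $t$ is correlated with $X_{\ceil{t}}$ through the integral over $[\floor{t},t]$. I would therefore write $\Delta_t=\Delta_{\floor{t}}+(\Delta_t-\Delta_{\floor{t}})$. Since $\Delta_{\floor{t}}$ is measurable with respect to the $\sigma$-algebra generated by $\mathcal{G}_{\floor{t}}$, $\mathcal{F}^{\lambda}_{\floor{t}}$ and $\theta_0$, while $X_{\ceil{t}}$ is independent of this $\sigma$-algebra, the unbiasedness $\E[H(\theta,X_0)]=h(\theta)$ of Assumption \ref{iid} and the i.i.d.\ property give, after conditioning, $\E\langle\Delta_{\floor{t}},H(\bar{\theta}^{\lambda}_{\floor{t}},X_{\ceil{t}})-h(\bar{\theta}^{\lambda}_{\floor{t}})\rangle=0$. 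The remaining part $\Delta_t-\Delta_{\floor{t}}$ is again of order $\lambda$ in $L^2$, and pairing it with $H(\bar{\theta}^{\lambda}_{\floor{t}},X_{\ceil{t}})-h(\bar{\theta}^{\lambda}_{\floor{t}})$ via Cauchy--Schwarz and Young — the latter's second moment being controlled by \eqref{growthsup}, Assumption \ref{iid} and the moment bounds — produces another $O(\lambda)$ contribution.

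Assembling the three pieces yields a differential inequality of the form $\frac{\rmd}{\rmd t}\E|\Delta_t|^2\le c\lambda\,\E|\Delta_t|^2+\lambda^2 R(t)$, where $R(t)$ is a linear combination of $1$, $\E[V_2(\bar{\theta}^{\lambda}_t)]$ and $\E[V_2(\bar{\zeta}_t^{\lambda,n})]$; over $t\in[nT,(n+1)T]$ these quantities are controlled by Corollary \ref{corr:Moments} and Lemma \ref{zetaprocessmoment}, and, recalling $\lambda T>1/2$ so that $\lambda nT>n/2$, they are bounded by $e^{-an/4}\E[V_2(\theta_0)]$ plus a constant. Since $\Delta_{nT}=0$, Grönwall's inequality over the interval $[nT,t]$, whose length is at most $T$, gives $\E|\Delta_t|^2\le\lambda\,e^{c\lambda T}\bigl(e^{-an/4}\,c'\,\E[V_2(\theta_0)]+c''\bigr)$, and since $e^{c\lambda T}\le e^{c}$ this is of the asserted form once the constants are renamed as $\bar{C}_{2,1},\bar{C}_{2,2}$ in \eqref{barc2}; taking square roots finishes the proof. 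The main obstacle is the careful conditioning argument for term (i) — identifying the right $\sigma$-algebra so that unbiasedness applies at non-grid times — together with the bookkeeping of all moment-dependent constants so as to match \eqref{barc2}; the rest is a routine synchronous-coupling Grönwall estimate.
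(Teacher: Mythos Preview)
Your approach is correct but genuinely different from the paper's. The paper also couples synchronously and bounds $W_1\le W_2$, but then works with the \emph{integral} identity
\[
\bigl|\bar{\zeta}_t^{\lambda,n}-\bar{\theta}^{\lambda}_t\bigr|\le\lambda\Bigl|\int_{nT}^{t}\!\bigl(H(\bar{\theta}^{\lambda}_{\floor{s}},X_{\ceil{s}})-h(\bar{\zeta}_s^{\lambda,n})\bigr)\,\rmd s\Bigr|,
\]
squares the whole thing, and splits the integrand as $H(\bar{\theta}^{\lambda}_{\floor{s}},X_{\ceil{s}})-H(\bar{\zeta}_s^{\lambda,n},X_{\ceil{s}})$ plus $h(\bar{\zeta}_s^{\lambda,n})-H(\bar{\zeta}_s^{\lambda,n},X_{\ceil{s}})$. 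The first piece gives the Gr\"onwall term via the \emph{stochastic} Lipschitz bound $L_1\eta(X_{\ceil{s}})$ (after inserting $|\bar{\theta}^{\lambda}_{\floor{s}}-\bar{\theta}^{\lambda}_s|$ via Lemma~\ref{onestepest}). For the second piece the paper partitions $[nT,t]$ into unit intervals, writes the noise integral as $\sum_k I_k+R_K$, and shows the cross terms $\E\langle I_k,I_j\rangle$ vanish by conditioning on $\mathcal{H}_{nT+j}=\mathcal{F}^{\lambda}_\infty\vee\mathcal{G}_{nT+j}$; this martingale-type orthogonality reduces $\bigl|\sum_k I_k\bigr|^2$ from $T^2$ terms to $T$ terms, and each $\E|I_k|^2$ is controlled by Lemma~\ref{lem:boundedVariance}. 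You instead differentiate $\E|\Delta_t|^2$, decompose the noise around $\bar{\theta}^{\lambda}_{\floor{t}}$, and exploit that $\Delta_{\floor{t}}$ is $\mathcal{G}_{\floor{t}}\vee\mathcal{F}^{\lambda}_{\floor{t}}\vee\sigma(\theta_0)$-measurable while $X_{\ceil{t}}$ is independent of this $\sigma$-algebra, so the unbiasedness kills the leading part pointwise in $t$; the residual $\Delta_t-\Delta_{\floor{t}}$ is $O(\lambda)$ in $L^2$ because $\Delta$ carries no Brownian increment. Your route avoids Lemma~\ref{lem:boundedVariance} entirely and uses only the deterministic Lipschitz bound \eqref{mulyan} for the Gr\"onwall feedback, whereas the paper's route makes the martingale structure of the noise explicit. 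Both give $\E|\Delta_t|^2=O(\lambda)$ with the same $e^{-an/4}$ dependence on $\theta_0$; note however that your constants will not literally reproduce \eqref{barc2}, since those are built from $\bar{\sigma}_Z,\tilde{\sigma}_Z$ of Lemma~\ref{lem:boundedVariance}, which you never invoke.
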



Then, the following Lemma provides the bound for the second term in \eqref{decomposition}.
\begin{lemma}\label{convergencepart2} Let Assumptions \ref{iid}, \ref{loclip} and \ref{assum:dissipativity} hold. For any $0< \lambda < \lambda_{\max}$ given in \eqref{eq:definition-lambda-max}, $t \in (nT, (n+1)T]$,
\begin{align*}
W_1(\mathcal{L}(\bar{\zeta}_t^{\lambda,n}),\calL(Z_t^\lambda)) \leq \sqrt{\lambda}(e^{-\dot{c}n/2}\bar{C}_{2,3}\mathbb{E}[V_4(\theta_0)] +\bar{C}_{2,4}),
\end{align*}
where $\bar{C}_{2,3} $, $\bar{C}_{2,4}$ are given in \eqref{mainthmc2}.
\end{lemma}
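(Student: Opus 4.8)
The plan is to transport the $w_{1,2}$-contraction of Proposition~\ref{contr} along the SGLD trajectory by a block (telescoping) argument, as in \cite{nonconvex}, the new feature being that each block-to-block discrepancy must be measured in the weighted distance $w_{1,2}$ and is therefore controlled through the uniform fourth-moment bounds of Corollary~\ref{corr:Moments} and Lemma~\ref{zetaprocessmoment}. As already observed, I may restrict to $0<\lambda\leq1$, so that $\tfrac12<\lambda T\leq1$ with $T=\floor{1/\lambda}$. First I would introduce, for $k=0,1,\dots,n$, the auxiliary processes $\bar{\zeta}^{\lambda,k}_t:=\zeta^{kT,\bar{\theta}^{\lambda}_{kT},\lambda}_t$, $t\geq kT$ (the process of Definition~\ref{zetaprocess} with $n$ replaced by $k$), noting that $\bar{\zeta}^{\lambda,0}_t=Z^{\lambda}_t$ (both solve \eqref{sde}, time-rescaled, from $\theta_0$) while $\bar{\zeta}^{\lambda,n}$ is the process appearing in the statement. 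All of $\bar{\zeta}^{\lambda,0},\dots,\bar{\zeta}^{\lambda,n}$ are defined at any $t\geq nT$, so the triangle inequality for $W_1$ together with $W_1\leq w_{1,2}$ yields, for $t\geq nT$,
\[
W_1(\calL(\bar{\zeta}_t^{\lambda,n}),\calL(Z_t^{\lambda}))\leq\sum_{k=0}^{n-1}W_1\big(\calL(\bar{\zeta}^{\lambda,k+1}_t),\calL(\bar{\zeta}^{\lambda,k}_t)\big)\leq\sum_{k=0}^{n-1}w_{1,2}\big(\calL(\bar{\zeta}^{\lambda,k+1}_t),\calL(\bar{\zeta}^{\lambda,k}_t)\big).
\]

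Next I would bound one summand. For $t\geq(k+1)T$ both $\bar{\zeta}^{\lambda,k+1}$ and $\bar{\zeta}^{\lambda,k}$ solve the time-rescaled Langevin SDE, started at time $(k+1)T$ from $\bar{\theta}^{\lambda}_{(k+1)T}$ and from $\zeta^{kT,\bar{\theta}^{\lambda}_{kT},\lambda}_{(k+1)T}$ respectively; both initial values are $\calF^{\lambda}_{(k+1)T}\vee\mathcal{G}_{\infty}\vee\sigma(\theta_0)$-measurable and in $L^2$ by Lemmas~\ref{lem:moment_SGLD_2p} and \ref{zetaprocessmoment}. Conditioning on this $\sigma$-algebra (the increments of $\tilde{B}^{\lambda}$ after $(k+1)T$ being independent of it) and applying Proposition~\ref{contr} from the restart time $(k+1)T$ — legitimate since \eqref{sde} is autonomous and $Z^{\lambda}_t=Z_{\lambda t}$ — then integrating over the conditioning, gives
\[
w_{1,2}\big(\calL(\bar{\zeta}^{\lambda,k+1}_t),\calL(\bar{\zeta}^{\lambda,k}_t)\big)\leq\hat{c}\,e^{-\dot{c}\lambda(t-(k+1)T)}\,w_{1,2}\big(\calL(\bar{\theta}^{\lambda}_{(k+1)T}),\calL(\zeta^{kT,\bar{\theta}^{\lambda}_{kT},\lambda}_{(k+1)T})\big).
\]

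The heart of the argument is the one-block estimate of the remaining $w_{1,2}$-distance, which I would carry out along the synchronous coupling (same driving Brownian motion on $[kT,(k+1)T]$, the two processes coinciding at $kT$). With $\Delta_k:=\bar{\theta}^{\lambda}_{(k+1)T}-\zeta^{kT,\bar{\theta}^{\lambda}_{kT},\lambda}_{(k+1)T}$, the bounds $1\wedge|\Delta_k|\leq|\Delta_k|$ and Cauchy--Schwarz give that this distance is at most $(\E|\Delta_k|^2)^{1/2}\,\big(\E[(1+V_2(\bar{\theta}^{\lambda}_{(k+1)T})+V_2(\zeta^{kT,\bar{\theta}^{\lambda}_{kT},\lambda}_{(k+1)T}))^2]\big)^{1/2}$. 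The first factor is $O(\sqrt{\lambda})$: it is exactly the $L^2$-quantity estimated in the proof of Lemma~\ref{convergencepart1} (which proceeds via an $L^2$-bound under the synchronous coupling), with $n$ replaced by $k$ and $t=(k+1)T$, so $(\E|\Delta_k|^2)^{1/2}\leq\sqrt{\lambda}\,(e^{-ak/4}\bar{C}_{2,1}\E[V_2(\theta_0)]+\bar{C}_{2,2})^{1/2}$. For the second factor I would use $V_2^2=V_4$, then Corollary~\ref{corr:Moments}, Lemma~\ref{zetaprocessmoment} and $(1-a\lambda)^{(k+1)T}\vee e^{-a\lambda(k+1)T}\leq e^{-a(k+1)/2}$ (valid because $(k+1)T\in\nset$ and $\lambda T>\tfrac12$) to bound it by $C\,(e^{-ak/2}\E[V_4(\theta_0)]+1)^{1/2}$. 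Multiplying the two factors and absorbing the mixed powers of the initial moments into $\E[V_4(\theta_0)]$ through $\E[V_2(\theta_0)]\leq(\E[V_4(\theta_0)])^{1/2}$ and $\E[V_4(\theta_0)]\geq1$, I obtain a one-block bound of the form $\sqrt{\lambda}\,(e^{-\gamma k}\tilde{D}_1\E[V_4(\theta_0)]+\tilde{D}_2)$ with explicit $\tilde{D}_1,\tilde{D}_2>0$ and $\gamma=a/8$ — the factor $\tfrac12$ in the final rate coming precisely from this square root of the rate $a/4$ of Lemma~\ref{convergencepart1}.

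Finally I would substitute this into the telescoping sum, using $\lambda(t-(k+1)T)\geq(n-k-1)\lambda T>(n-k-1)/2$ for $t\geq nT$, to get $W_1(\calL(\bar{\zeta}_t^{\lambda,n}),\calL(Z_t^{\lambda}))\leq\sqrt{\lambda}\sum_{k=0}^{n-1}\hat{c}\,e^{-\dot{c}(n-k-1)/2}(e^{-\gamma k}\tilde{D}_1\E[V_4(\theta_0)]+\tilde{D}_2)$. The $\tilde{D}_2$-part sums to a constant in $n$, namely $\hat{c}\tilde{D}_2/(1-e^{-\dot{c}/2})$, which is $\bar{C}_{2,4}$; the series $\sum_{k=0}^{n-1}e^{-\dot{c}(n-k-1)/2}e^{-\gamma k}$ is geometric and equals $O(e^{-\min\{\dot{c}/2,\gamma\}n})=O(e^{-\min\{\dot{c},a/4\}n/2})$ — the non-generic case of coinciding exponents being handled by the usual device of absorbing the resulting polynomial factor into the constant — which yields the $\bar{C}_{2,3}\E[V_4(\theta_0)]$ term; collecting the two contributions gives the claimed bound with $\bar{C}_{2,3},\bar{C}_{2,4}$ as in \eqref{mainthmc2}. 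I expect the main obstacle to be precisely this one-block $w_{1,2}$-estimate: one must extract the order $\sqrt{\lambda}$ from the strong error of the Euler scheme against the Langevin flow over $T\asymp1/\lambda$ steps while simultaneously keeping the $V_2$-weights of $w_{1,2}$ under control, which is exactly what forces the uniform fourth-moment bounds and a careful accounting of both the $\E[V_4(\theta_0)]$-dependence and the precise exponential rate; the restarted use of Proposition~\ref{contr} via the Markov property is the remaining, more routine, technical point.
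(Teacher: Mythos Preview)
Your proposal is correct and follows essentially the same route as the paper: the telescoping decomposition $\bar{\zeta}^{\lambda,0}=Z^{\lambda},\dots,\bar{\zeta}^{\lambda,n}$, the use of $W_1\leq w_{1,2}$ together with the $w_{1,2}$-contraction of Proposition~\ref{contr} at the restart times, the one-block $L^2$ bound from the proof of Lemma~\ref{convergencepart1} (i.e.\ \eqref{vibd1}), and the fourth-moment bounds of Corollary~\ref{corr:Moments} and Lemma~\ref{zetaprocessmoment} to control the $V_2$-weights, followed by summation of the resulting geometric series and absorption of the linear factor via $e^{-\alpha n}(n+1)\leq 1+\alpha^{-1}$.

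The only minor difference is in how the product $|\Delta_k|\cdot(1+V_2+V_2)$ is decoupled: you apply Cauchy--Schwarz and then bound each factor separately, whereas the paper first bounds $w_{1,2}\leq W_2\cdot[1+\{\E V_4\}^{1/2}+\{\E V_4\}^{1/2}]$ and then uses the Young-type splitting $W_2\cdot M\leq \lambda^{-1/2}W_2^2+3\sqrt{\lambda}(1+\E V_4+\E V_4)$, so that \eqref{vibd1} enters through $W_2^2$ rather than $W_2$. Both routes extract the same $\sqrt{\lambda}$ rate and the same exponential decay in $k$; the paper's splitting avoids taking a square root of the one-block bound and leads directly to the precise constants recorded in \eqref{mainthmc2}, whereas your product-of-square-roots route would produce slightly different (but equivalent in order) explicit constants, so your final claim that you recover \emph{exactly} the constants of \eqref{mainthmc2} is a small overstatement.
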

By using similar arguments as in Lemma \ref{convergencepart2}, one can obtain the non-asymptotic estimate in $W_2$ distance between $\mathcal{L}(\bar{\zeta}_t^{\lambda,n})$ and $\calL(Z_t^\lambda)$, which is given in the following corollary.
\begin{corollary}\label{convergencepart2w2} Let Assumptions \ref{iid}, \ref{loclip} and \ref{assum:dissipativity} hold. For any $0< \lambda < \lambda_{\max}$ given in \eqref{eq:definition-lambda-max}, $t \in (nT, (n+1)T]$,
\begin{align*}
W_2(\mathcal{L}(\bar{\zeta}_t^{\lambda,n}),\calL(Z_t^\lambda)) \leq \lambda^{1/4}(e^{-\dot{c}n/4}\bar{C}^*_{2,3}\mathbb{E}^{1/2}[V_4(\theta_0)] +\bar{C}^*_{2,4}),
\end{align*}
where $\bar{C}^*_{2,3} $, $\bar{C}^*_{2,4}$ are given in \eqref{mainthmcstar2}.
\end{corollary}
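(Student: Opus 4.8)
The plan is to mirror the proof of Lemma \ref{convergencepart2}, upgrading the $W_1$ bound on $\mathcal{L}(\bar\zeta^{\lambda,n}_t)$ versus $\mathcal{L}(Z^\lambda_t)$ to a $W_2$ bound at the cost of halving the exponent of $\lambda$ (from $\sqrt{\lambda}$ to $\lambda^{1/4}$) and of the contraction rate in $n$. Both processes satisfy the same SDE $\rmd Y_t = -\lambda h(Y_t)\,\rmd t + \sqrt{2\lambda\beta^{-1}}\,\rmd\tilde B^\lambda_t$; they differ only in their starting configuration at time $nT$. The process $\bar\zeta^{\lambda,n}$ starts at time $nT$ from $\bar\theta^\lambda_{nT}$, whereas $Z^\lambda$ has been run from $\theta_0$ all the way from time $0$. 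So the difference $W_2(\mathcal{L}(\bar\zeta^{\lambda,n}_t),\mathcal{L}(Z^\lambda_t))$ should be controlled by (i) the $W_2$-contractivity of the Langevin semigroup over the elapsed time $t-nT$, applied to (ii) the $W_2$-discrepancy of the two laws at the initial time $nT$, namely $W_2(\mathcal{L}(\bar\theta^\lambda_{nT}),\mathcal{L}(Z^\lambda_{nT}))$.

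First I would make the telescoping/recursive structure explicit. Writing $t\in[nT,(n+1)T]$ and using that $\bar\zeta^{\lambda,n}$ is by Definition \ref{zetaprocess} the Langevin flow started at $nT$ from $\bar\theta^\lambda_{nT}$, one has at time $nT$ itself that $W_2(\mathcal{L}(\bar\zeta^{\lambda,n}_{nT}),\mathcal{L}(Z^\lambda_{nT})) = W_2(\mathcal{L}(\bar\theta^\lambda_{nT}),\mathcal{L}(Z^\lambda_{nT}))$, and then by the triangle inequality $W_2(\mathcal{L}(\bar\theta^\lambda_{nT}),\mathcal{L}(Z^\lambda_{nT}))\le W_2(\mathcal{L}(\bar\theta^\lambda_{nT}),\mathcal{L}(\bar\zeta^{\lambda,n-1}_{nT})) + W_2(\mathcal{L}(\bar\zeta^{\lambda,n-1}_{nT}),\mathcal{L}(Z^\lambda_{nT}))$. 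The first term here is the $W_2$ analogue of the one-block error estimated by (the $W_2$-strengthening of) Lemma \ref{convergencepart1}, which scales like $\sqrt{\lambda}$ and carries a geometrically decaying prefactor in $n$; I would obtain it directly from the $L^2$ bound on $|\bar\theta^\lambda_t - \bar\zeta^{\lambda,n-1}_t|$ that underlies Lemma \ref{convergencepart1} rather than from its stated $W_1$ conclusion, since an $L^2$-coupling bound gives $W_2$ for free. The second term is handled by the $W_2$-contraction of the Langevin SDE: since $h$ is Lipschitz by \eqref{mulyan} and $U$ is strongly convex outside a ball (the dissipativity of Assumption \ref{assum:dissipativity} gives the standard synchronous-coupling contraction for $W_2$ once we are far from the origin, while near the origin a Lyapunov/reflection argument or simply the crude Grönwall bound over one block of length $\le T$ suffices), one gets a factor $e^{-\min\{\dot c, a/4\}\lambda(nT - (n-1)T)}\le e^{-\min\{\dot c,a/4\}/4}$ since $\lambda T\in(1/2,1]$. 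Iterating this recursion over $n$ blocks produces a geometric series whose sum is $\lesssim \sqrt{\lambda}/(1 - e^{-\min\{\dot c,a/4\}/4})$ plus a transient term $e^{-\min\{\dot c,a/4\}n/4}\,\mathbb{E}^{1/2}[V_4(\theta_0)]$, where the $V_4$ moment and its square root enter through the moment bounds of Lemma \ref{zetaprocessmoment} and Corollary \ref{corr:Moments}, exactly as in Lemma \ref{convergencepart2}.

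The passage from $\sqrt{\lambda}$ to $\lambda^{1/4}$ is the one genuinely new ingredient relative to Lemma \ref{convergencepart2}, and it is where I would spend care. The mechanism is the standard interpolation trick: a bound of the form $W_2(\mu,\nu)^2 \le W_1(\mu,\nu)\cdot \mathrm{diam}$-type quantity, or more precisely $W_2(\mu,\nu)^2 \le w_{1,2}(\mu,\nu)\cdot(\text{second-moment factor})$, combined with the fact that the $w_{1,2}$ functional defined in \eqref{w1p} already incorporates the weight $(1+V_2(\theta)+V_2(\theta'))$, so that contracting in $w_{1,2}$ (Proposition \ref{contr}) and controlling fourth moments (Lemma \ref{zetaprocessmoment}) yields $W_2^2 \lesssim \sqrt{\lambda}\cdot(\text{bounded})$, hence $W_2\lesssim \lambda^{1/4}$. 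Concretely I would write $W_2^2(\mathcal{L}(\bar\zeta^{\lambda,n}_t),\mathcal{L}(Z^\lambda_t)) \le \big(\mathbb{E}|\bar\zeta^{\lambda,n}_t - Z^\lambda_t|^2\big)$ under an optimal $w_{1,2}$-coupling, split the integrand via $|\cdot|^2 = |\cdot|^2\mathbbm{1}_{|\cdot|\le 1} + |\cdot|^2\mathbbm{1}_{|\cdot|>1} \le (1\wedge|\cdot|) + |\cdot|^2\mathbbm{1}_{|\cdot|>1}$, bound the first part by $w_{1,2}$ (which is $O(\sqrt{\lambda})$ by the $W_1$ machinery) and the second by Cauchy–Schwarz against the $\mathbb{P}(|\cdot|>1)\le w_{1,2}/1$ tail and the uniform fourth-moment bounds, again producing $O(\sqrt\lambda)$, so the square root gives the claimed $\lambda^{1/4}$; the contraction exponent is correspondingly halved from $\min\{\dot c,a/4\}n/2$ to $\min\{\dot c,a/4\}n/4$, and $\mathbb{E}[V_4(\theta_0)]$ is replaced by $\mathbb{E}^{1/2}[V_4(\theta_0)]$ because it enters inside a square root. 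The main obstacle is organizing the constants — keeping track of how the $\lambda T\in(1/2,1]$ restriction, the block length, and the two competing rates $\dot c$ and $a/4$ combine through the geometric sum — so that the final constants $\bar C^*_{2,3}$ and $\bar C^*_{2,4}$ come out in the explicit closed form promised by \eqref{mainthmcstar2}; the analytic content is otherwise a routine re-run of Lemma \ref{convergencepart2} with the interpolation inequality bolted on at the end.
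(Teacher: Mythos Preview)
Your proposal identifies the right ingredient---the inequality $W_2 \le \sqrt{2\,w_{1,2}}$---but misplaces where it must be applied, and the recursive scheme in your second paragraph leans on a tool that is not available here.

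The main gap is the appeal to ``the $W_2$-contraction of the Langevin SDE'' in your second paragraph. In the nonconvex setting of this paper there is no $W_2$-contraction: dissipativity (Assumption~\ref{assum:dissipativity}) does not yield synchronous-coupling contraction in $W_2$, and the reflection-coupling result of Proposition~\ref{contr} contracts only the weighted functional $w_{1,2}$, not $W_2$. So you cannot iterate
\[
W_2(\mathcal{L}(\bar\zeta^{\lambda,n-1}_{nT}),\mathcal{L}(Z^\lambda_{nT})) \le e^{-\text{rate}}\,W_2(\mathcal{L}(\bar\theta^\lambda_{(n-1)T}),\mathcal{L}(Z^\lambda_{(n-1)T}))
\]
as you propose. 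Your fallback in the third paragraph---apply $W_2^2 \lesssim w_{1,2}$ once, at the end, to the full distance---then requires $w_{1,2}(\mathcal{L}(\bar\zeta^{\lambda,n}_t),\mathcal{L}(Z^\lambda_t)) = O(\sqrt{\lambda})$. Lemma~\ref{convergencepart2} does not establish this: it telescopes in $W_1$ (which is a metric) and bounds each \emph{summand} by a contracted $w_{1,2}$; the functional $w_{1,2}$ itself is not a metric, so it does not telescope and you have no control on $w_{1,2}$ of the full pair.

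The paper's argument is the correct ordering of the same moves. One telescopes in $W_2$ first (triangle inequality holds), then applies $W_2 \le \sqrt{2\,w_{1,2}}$ to each summand $W_2(\mathcal{L}(\bar\zeta^{\lambda,k}_t),\mathcal{L}(\bar\zeta^{\lambda,k-1}_t))$, so that Proposition~\ref{contr} can be invoked termwise to contract down to $w_{1,2}^{1/2}(\mathcal{L}(\bar\theta^\lambda_{kT}),\mathcal{L}(\bar\zeta^{\lambda,k-1}_{kT}))$, with the square root halving the exponential rate to $e^{-\dot c(n-k)/2}$. That initial value is bounded by $W_2^{1/2}\times[\text{moments}]^{1/2}$, and a Young split $W_2^{1/2}[\cdot]^{1/2} \le \lambda^{-1/4}W_2 + \lambda^{1/4}[\cdot]$ together with \eqref{vibd1}, Corollary~\ref{corr:Moments} and Lemma~\ref{zetaprocessmoment} closes the geometric sum and produces the $\lambda^{1/4}$ and the halved exponent $\min\{\dot c,a/4\}n/4$.
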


Finally, by using the inequality \eqref{decomposition} and the results from previous lemmas, one can obtain the non-asymptotic bound between $\mathcal{L}(\bar{\theta}^{\lambda}_t)$ and $\mathcal{L}(Z^{\lambda}_t)$, $t \in (nT, (n+1)T]$, in $W_1$ distance.
\begin{lemma}\label{convergencepart3} Let Assumptions \ref{iid}, \ref{loclip} and \ref{assum:dissipativity} hold. For any $0< \lambda < \lambda_{\max}$ given in \eqref{eq:definition-lambda-max}, $t \in (nT, (n+1)T]$,
\begin{align*}
W_1(\mathcal{L}(\bar{\theta}^{\lambda}_t),\mathcal{L}(Z^{\lambda}_t)) \leq (\bar{C}_{2,1}^{1/2} +\bar{C}_{2,2}^{1/2}+ \bar{C}_{2,3}+\bar{C}_{2,4})\sqrt{\lambda}(e^{-\dot{c}n/2}\mathbb{E}[V_4(\theta_0)] +1),
\end{align*}
where $\bar{C}_{2,1}$, $\bar{C}_{2,2}$ are given in \eqref{barc2} (Lemma \ref{convergencepart1}), and $\bar{C}_{2,3} $, $\bar{C}_{2,4}$ are given in \eqref{mainthmc2} (Lemma \ref{convergencepart2}).
\end{lemma}

Before proceeding to the proofs of the main results, the constants $\dot{c}$ and $\hat{c}$ from Proposition \ref{contr} are given in an explicit form.

\begin{lemma} \label{contractionconst} The contraction constant $\dot{c}>0$ in Proposition \ref{contr} is given by
\begin{equation} \label{dotc}
\dot{c}:=\min\{\bar{\phi}, \bar{c}(2), 4\tilde{c}(2) \epsilon\bar{c}(2)\}/2,
\end{equation}
where $\bar{c}(2)= a/2$, $ \tilde{c}(2) = (3/2) a  \mathrm{v}_2(\overline{M}_2)$ with $\overline{M}_2$ given in Lemma~\ref{lem:PreLimforDriftY}, $\bar{\phi}$ is given by
\begin{equation} \label{simplifiedphi}
\bar{\phi}:= \left(\bar{b}\sqrt{8\pi/(\beta K_1)}   \exp\left(\left(\bar{b} \sqrt{\beta K_1/8} +\sqrt{8/(\beta K_1)}\right)^2\right) \right)^{-1} \,,
\end{equation}
and moreover, $\epsilon>0$ can be chosen such that following inequality is satisfied
\begin{equation} \label{simplifiedep}
\epsilon  \leq 1 \wedge \left(4\tilde{c}(2) \sqrt{2\beta\pi/K_1}\int_0^{\tilde{b}}\exp\left(\left(s\sqrt{\beta K_1/8}+\sqrt{8/(\beta K_1)}\right)^2\right) \,\rmd s\right)^{-1},
\end{equation}
where $K_1 := L_1\mathbb{E}[\eta(X_0)]$, $\tilde{b}:=2\sqrt{2\tilde{c}(2)/\bar{c}(2)-1}$ and $\bar{b} := 2\sqrt{4\tilde{c}(2)(1+\bar{c}(2))/\bar{c}(2)-1}$. 

The constant $\hat{c}>0$ is given by $\hat{c} :=2(1+ \bar{b})\exp(\beta K_1 \bar{b}^2/8+2\bar{b})/\epsilon$.
\end{lemma}

Now, we are ready to prove our first main result, namely Theorem~\ref{main}.

\begin{proof}[\textbf{Proof of Theorem \ref{main}}] One notes that, by using $\lambda T >1/2$, Lemma \ref{convergencepart3} and Proposition \ref{contr}, for $t \in (nT, (n+1)T]$
\begin{align}
W_1(\mathcal{L}(\bar{\theta}^{\lambda}_t),\pi_{\beta}) 
&\leq W_1(\mathcal{L}(\bar{\theta}^{\lambda}_t),\mathcal{L}(Z^{\lambda}_t))+W_1(\mathcal{L}(Z^{\lambda}_t),\pi_{\beta}) \nonumber\\
& \leq(\bar{C}_{2,1}^{1/2} +\bar{C}_{2,2}^{1/2}+ \bar{C}_{2,3}+\bar{C}_{2,4})\sqrt{\lambda}(e^{-\dot{c}n/2}\mathbb{E}[V_4(\theta_0)] +1) \nonumber \\
&\quad +\hat{c} e^{-\dot{c}\lambda t} w_{1,2}(\theta_0, \pi_{\beta}) \nonumber\\
& \leq (\bar{C}_{2,1}^{1/2} +\bar{C}_{2,2}^{1/2}+ \bar{C}_{2,3}+\bar{C}_{2,4})\sqrt{\lambda}(e^{-\dot{c}n/2}\mathbb{E}[V_4(\theta_0)] +1) \nonumber\\
&\quad +\hat{c} e^{-\dot{c}\lambda t}\left[1+ \mathbb{E}[V_2(\theta_0)]+\int_{\mathbb{R}^d}V_2(\theta)\pi_{\beta}(d\theta)\right] \nonumber\\
\begin{split}\label{w1bd}
& \leq 2e^{-\dot{c}n/2} (\lambda_{\max}^{1/2}(\bar{C}_{2,1}^{1/2} +\bar{C}_{2,2}^{1/2}+ \bar{C}_{2,3}+\bar{C}_{2,4})+\hat{c})(1+\mathbb{E}[|\theta_0|^4])\\
&\quad +\hat{c}e^{-\dot{c}n/2}\left[1 +\int_{\mathbb{R}^d}V_2(\theta)\pi_{\beta}(d\theta)\right](1+\mathbb{E}[|\theta_0|^4])\\
&\quad +\sqrt{\lambda}(\bar{C}_{2,1}^{1/2} +\bar{C}_{2,2}^{1/2}+ \bar{C}_{2,3}+\bar{C}_{2,4}).
\end{split}
\end{align}
The above result implies, for any $n \in \mathbb{N}$,
\begin{equation}\label{w1estnT}
W_1(\mathcal{L}(\bar{\theta}^{\lambda}_{(n+1)T}),\pi_{\beta}) \leq C_1e^{-\dot{c}  (n+1)/2}(1+\mathbb{E}[|\theta_0|^4])+(C_2+C_3)\sqrt{\lambda},
\end{equation}
where
\begin{align}\label{mainthmconst}
\begin{split}
 C_1 &:= 2e^{\dot{c}/2} \left[ (\lambda_{\max}^{1/2}(\bar{C}_{2,1}^{1/2} +\bar{C}_{2,2}^{1/2}+ \bar{C}_{2,3}+\bar{C}_{2,4})+\hat{c})+\hat{c}\left(1+ \int_{\mathbb{R}^d}V_2(\theta)\pi_{\beta}(d\theta)\right)\right] \\
 &= O\left(e^{C_{\star}(1+d/\beta)(1+\beta)}\left(1+\frac{1}{1-e^{-\dot{c}}}\right)\right),\\
 C_2&: = \bar{C}_{2,1}^{1/2} +\bar{C}_{2,2}^{1/2} = O\left(1+\sqrt{\frac{d}{\beta}}\right), \\
  C_3 &:= \bar{C}_{2,3}+\bar{C}_{2,4} = O\left(e^{C_{\star}(1+d/\beta)(1+\beta)}\left(1+\frac{1}{1-e^{-\dot{c}}}\right)\right)
\end{split}
\end{align}
with $\dot{c}, \hat{c}$ given in Lemma \ref{contractionconst}, $\bar{C}_{2,1}$, $\bar{C}_{2,2}$ given in \eqref{barc2} (Lemma \ref{convergencepart1}), $\bar{C}_{2,3} $, $\bar{C}_{2,4}$ given in \eqref{mainthmc2} (Lemma \ref{convergencepart2}), $C_{\star}>0$ independent of $d, \beta, n$. One notes that the above estimate \eqref{w1estnT} is established for $(\bar{\theta}^{\lambda}_{(n+1)T})_{n \in \mathbb{N}}$. 
To obtain a non-asymptotic error bound for  $(\bar{\theta}^{\lambda}_{(n+1)})_{n \in \mathbb{N}}$, we set $(n+1)T$ to $n+1$ on the LHS of \eqref{w1estnT}, and set $n+1$ to $ (n+1)/T$ on the RHS of \eqref{w1estnT}. By using $\lambda (n+1) \leq (n+1)/T$, it follows that, for any $n \in \mathbb{N}$,
\[
W_1(\mathcal{L}(\theta^{\lambda}_{n+1}),\pi_{\beta}) \leq C_1e^{-\dot{c}\lambda (n+1)/2}(1+\mathbb{E}[|\theta_0|^4])+(C_2+C_3)\sqrt{\lambda}.
\]

Moreover, for $\varepsilon>0$, if we choose $\lambda$ and $n$ such that $\lambda\leq \lambda_{\max}$, $C_1e^{-\dot{c}\lambda n/2}(1+\mathbb{E}[|\theta_0|^4])\leq \varepsilon/2$, $(C_2+C_3)\sqrt{\lambda}\leq \varepsilon/2$, where $\lambda_{\max}$ is given in \eqref{eq:definition-lambda-max}, then $W_1(\mathcal{L}(\theta^{\lambda}_n),\pi_{\beta})\leq \varepsilon$. This implies $\lambda \leq \frac{\varepsilon^2}{4(C_2+C_3)^2} \wedge \lambda_{\max}$, $\lambda n \geq \frac{2}{\dot{c}}\ln \frac{2C_1(1+\mathbb{E}[|\theta_0|^4])}{\varepsilon}$. More precisely, by using \eqref{mainthmconst}, one obtains 
\[
n\geq \frac{C_{\star}e^{C_{\star}(1+d/\beta)(1+\beta)}}{\varepsilon^2\dot{c}}\left(1+\frac{1}{(1-e^{-\dot{c}})^2}\right)\ln\left( \frac{C_{\star}e^{C_{\star}(1+d/\beta)(1+\beta)}}{\varepsilon}\left(1+\frac{1}{1-e^{-\dot{c}}}\right)\right),
\]
where $\dot{c}$ is the contraction constant of the Langevin diffusion \eqref{sde} given explicitly in Lemma \ref{contractionconst}. 
\end{proof}

Next, we prove our second result Corollary~\ref{cw2}, i.e., a uniform bound in $W_2$ distance.
\begin{proof}[\textbf{Proof of Corollary \ref{cw2}}]  By using \eqref{vibd1} in Lemma \ref{convergencepart1}, Corollary \ref{convergencepart2w2} and Proposition \ref{contr}, one obtains
\begin{align*}
W_2(\mathcal{L}(\bar{\theta}^{\lambda}_t),\pi_{\beta})
&\leq W_2(\mathcal{L}(\bar{\theta}^{\lambda}_t),\mathcal{L}(Z^{\lambda}_t))+W_2(\mathcal{L}(Z^{\lambda}_t),\pi_{\beta})\\
& \leq W_2(\calL(\bar{\theta}^{\lambda}_t),\calL(\bar{\zeta}_t^{\lambda,n})) +W_2(\mathcal{L}(\bar{\zeta}_t^{\lambda,n}),\calL(Z_t^\lambda))+W_2(\mathcal{L}(Z^{\lambda}_t),\pi_{\beta})\\
& \leq \sqrt{\lambda} (e^{-an/4}\bar{C}_{2,1}\mathbb{E}[V_2(\theta_0)]  +\bar{C}_{2,2})^{1/2} \\
&\quad+\lambda^{1/4}(e^{-\dot{c}n/4}\bar{C}^*_{2,3}\mathbb{E}^{1/2}[V_4(\theta_0)] +\bar{C}^*_{2,4})\\
&\quad +\sqrt{2w_{1,2}(\mathcal{L}(Z^{\lambda}_t),\pi_{\beta})}\\
& \leq \lambda^{1/4}(\lambda_{\max}^{1/4}\bar{C}_{2,1}^{1/2} +\lambda_{\max}^{1/4}\bar{C}_{2,2}^{1/2} +\bar{C}^*_{2,3} + \bar{C}^*_{2,4})(e^{-\dot{c}n/4}\mathbb{E}[V_4(\theta_0)] +1)\\
&\quad +\hat{c}^{1/2} e^{-\dot{c}\lambda t/2} \sqrt{2w_{1,2}(\theta_0, \pi_{\beta})},
\end{align*}
Further calculations yield,
\begin{align*}
&W_2(\mathcal{L}(\bar{\theta}^{\lambda}_t),\pi_{\beta})\\
& \leq \lambda^{1/4}(\lambda_{\max}^{1/4}\bar{C}_{2,1}^{1/2} +\lambda_{\max}^{1/4}\bar{C}_{2,2}^{1/2} +\bar{C}^*_{2,3} + \bar{C}^*_{2,4}) (e^{-\dot{c}n/4}\mathbb{E}[V_4(\theta_0)] +1) \\
&\quad +\sqrt{2}\hat{c}^{1/2} e^{-\dot{c}\lambda t/2}\left(1+ \mathbb{E}[V_2(\theta_0)]+\int_{\mathbb{R}^d}V_2(\theta)\pi_{\beta}(d\theta)\right)^{1/2}\\
& \leq 2e^{-\dot{c}n/4} (\lambda_{\max}^{1/2}(\bar{C}_{2,1}^{1/2} + \bar{C}_{2,2}^{1/2} )+\lambda_{\max}^{1/4}(\bar{C}^*_{2,3} + \bar{C}^*_{2,4})+\sqrt{2}\hat{c}^{1/2})(1+\mathbb{E}[|\theta_0|^4])\\
&\quad +\sqrt{2}\hat{c}^{1/2}e^{-\dot{c}n/4}\left[1 +\int_{\mathbb{R}^d}V_2(\theta)\pi_{\beta}(d\theta)\right]\\
&\quad + \lambda^{1/4}(\lambda_{\max}^{1/4}\bar{C}_{2,1}^{1/2} +\lambda_{\max}^{1/4}\bar{C}_{2,2}^{1/2} +\bar{C}^*_{2,3} + \bar{C}^*_{2,4}),
\end{align*}
where the last inequality holds due to $\lambda T >1/2$. Thus, for any $n \in \mathbb{N}$, it follows that
\[
W_2(\mathcal{L}(\theta^{\lambda}_n),\pi_{\beta}) \leq C_4 \rme^{-\dot{c}\lambda n/4}\E[|\theta_{0}|^{4}+1] +(C_5+C_6)\lambda^{1/4}
\]
where
\begin{align}\label{cw2const}
\begin{split}
C_4 &:= 2 \left(\lambda_{\max}^{1/2}(\bar{C}_{2,1}^{1/2} + \bar{C}_{2,2}^{1/2} )+\lambda_{\max}^{1/4}(\bar{C}^*_{2,3} + \bar{C}^*_{2,4})+\sqrt{2}\hat{c}^{1/2}\right)\\
&\quad +\sqrt{2}\hat{c}^{1/2}\left(1+ \int_{\mathbb{R}^d}V_2(\theta)\pi_{\beta}(d\theta)\right) \\
&= O\left(e^{C_{\star}(1+d/\beta)(1+\beta)}\left(1+\frac{1}{1-e^{-\dot{c}/2}}\right)\right)\\
 C_5 &:= \lambda_{\max}^{1/4}\bar{C}_{2,1}^{1/2} +\lambda_{\max}^{1/4}\bar{C}_{2,2}^{1/2} =O\left(1+\sqrt{\frac{d}{\beta}}\right), \\
  C_6 &:= \bar{C}^*_{2,3} + \bar{C}^*_{2,4} = O\left(e^{C_{\star}(1+d/\beta)(1+\beta)}\left(1+\frac{1}{1-e^{-\dot{c}/2}}\right)\right),
\end{split}
\end{align}
with $\dot{c}, \hat{c}$ given in Lemma \ref{contractionconst}, $\bar{C}_{2,1}$, $\bar{C}_{2,2}$ given in \eqref{barc2} (Lemma \ref{convergencepart1}),  $\bar{C}^*_{2,3} $, $\bar{C}^*_{2,4}$ given in \eqref{mainthmcstar2} (Lemma \ref{convergencepart2w2}), $C_{\star}>0$ independent of $d, \beta, n$. 

Moreover, for $\varepsilon>0$, if we choose $\lambda$ and $n$ such that, $\lambda\leq \lambda_{\max}$, $C_4 \rme^{-\dot{c}\lambda n/4}\E[|\theta_{0}|^{4}+1] \leq \varepsilon/2$,  $(C_5+C_6)\lambda^{1/4}\leq \varepsilon/2$, where $\lambda_{\max}$ is given in \eqref{eq:definition-lambda-max}, then $W_2(\mathcal{L}(\theta^{\lambda}_n),\pi_{\beta})\leq \varepsilon$. This implies
$\lambda \leq \frac{\varepsilon^4}{16(C_5+C_6)^4} \wedge \lambda_{\max}$, $ \lambda n \geq \frac{4}{\dot{c}}\ln \frac{2C_4(1+\mathbb{E}[|\theta_0|^4])}{\varepsilon}$. More precisely, by using \eqref{cw2const}, one obtains 
\[
n\geq \frac{C_{\star}e^{C_{\star}(1+d/\beta)(1+\beta)}}{\varepsilon^4\dot{c}}\left(1+\frac{1}{(1-e^{-\dot{c}/2})^4}\right)\ln\left( \frac{C_{\star}e^{C_{\star}(1+d/\beta)(1+\beta)}}{\varepsilon}\left(1+\frac{1}{1-e^{-\dot{c}/2}}\right)\right),
\] 
where $\dot{c}$ is the contraction constant of the Langevin diffusion \eqref{sde} given explicitly in Lemma \ref{contractionconst}. 
\end{proof}

Finally, we move on to prove our result on nonconvex optimization, namely, Corollary~\ref{eer}.

\begin{proof}[\textbf{Proof of Corollary \ref{eer}}] To obtain an upper bound for the expected excess risk $\mathbb{E}[U( \theta^{\lambda}_n)] - \inf_{\theta \in \mathbb{R}^d} U(\theta) $, one considers the following splitting
\begin{equation}\label{eersplitting}
\mathbb{E}[U( \theta^{\lambda}_n)] - \inf_{\theta \in \mathbb{R}^d} U(\theta) = \left( \mathbb{E}[U( \theta^{\lambda}_n)]  -  \mathbb{E}[U(Z_{\infty})]\right) + \left( \mathbb{E}[U(Z_{\infty})]- \inf_{\theta \in \mathbb{R}^d} U(\theta) \right),
\end{equation}
where $Z_{\infty}\sim \pi_{\beta}$ with $\pi_{\beta}$ defined in \eqref{pibetaexp}. By using \cite[Lemma~3.5]{raginsky}, Remark \ref{rem:BoundsOnH}, Lemma \ref{lem:moment_SGLD_2p}, and Corollary \ref{cw2}, the first term on the RHS of \eqref{eersplitting} can be bounded by
\begin{align*}
&\mathbb{E}[U( \theta^{\lambda}_n)]  -  \mathbb{E}[U(Z_{\infty})] \\
& \leq \left(L_1 \mathbb{E}[\eta(X_0)](\mathbb{E}[|\theta_0|^2]+c_1(\lambda_{\max}+a^{-1})) +L_2\mathbb{E}[\bar{\eta}(X_0)]+H_{\star}\right)W_2(\mathcal{L}(\theta^{\lambda}_n),\pi_{\beta})\\
&\leq \left(L_1 \mathbb{E}[\eta(X_0)](\mathbb{E}[|\theta_0|^2]+c_1(\lambda_{\max}+a^{-1})) +L_2\mathbb{E}[\bar{\eta}(X_0)]+H_{\star}\right)\\
&\quad \times\left( C_4 \rme^{-\dot{c}\lambda n/4}\E[|\theta_{0}|^{4}+1] +(C_5+C_6)\lambda^{1/4}\right)\\
&\leq C^{\sharp}_1e^{-\dot{c}\lambda n/4}+C^{\sharp}_2\lambda^{1/4},
\end{align*}
where
\begin{align}\label{eerconst}
\begin{split}
C^{\sharp}_1 &: = C_4\left(L_1 \mathbb{E}[\eta(X_0)](\mathbb{E}[|\theta_0|^2]+c_1(\lambda_{\max}+a^{-1})) +L_2\mathbb{E}[\bar{\eta}(X_0)]+H_{\star}\right)\E[|\theta_{0}|^{4}+1], \\
C^{\sharp}_2 & :=(C_5+C_6)\left(L_1 \mathbb{E}[\eta(X_0)](\mathbb{E}[|\theta_0|^2]+c_1(\lambda_{\max}+a^{-1})) +L_2\mathbb{E}[\bar{\eta}(X_0)]+H_{\star}\right),
\end{split}
\end{align}
with $\dot{c}$ given in \eqref{dotc}, $C_4, C_5, C_6$ given in \eqref{cw2const} and $c_1$ given in \eqref{2ndmomentconst}. Moreover, the second term on the RHS of \eqref{eersplitting} can be estimated by using \cite[Proposition~3.4]{raginsky}, which gives, $\mathbb{E}[U(Z_{\infty})]- \inf_{\theta \in \mathbb{R}^d} U(\theta) \leq  C^{\sharp}_3$,
where
\begin{equation}\label{eerconst2}
C^{\sharp}_3 := \frac{d}{2\beta}\log\left(\frac{e L_1\mathbb{E}[\eta(X_0)]}{a}\left(\frac{b\beta}{d}+1\right)\right).
\end{equation}
Finally, one obtains 
\[
\mathbb{E}[U( \theta^{\lambda}_n)] - \inf_{\theta \in \mathbb{R}^d} U(\theta)  \leq  C^{\sharp}_1e^{- \dot{c}\lambda n/4}+C^{\sharp}_2\lambda^{1/4}+C^{\sharp}_3,
\] 
where  $\dot{c}$ is given in \eqref{dotc}, and 
\begin{align*}
C^{\sharp}_1 &= O\left(e^{C_{\star}(1+d/\beta)(1+\beta)}\left(1+\frac{1}{1-e^{-\dot{c}/2}}\right)\right),\\
C^{\sharp}_2 &= O\left(e^{C_{\star}(1+d/\beta)(1+\beta)}\left(1+\frac{1}{1-e^{-\dot{c}/2}}\right)\right),\\
C^{\sharp}_3 &= O((d/\beta)\log(C_{\star}(\beta/d+1)))
\end{align*}
with 
$C_{\star}>0$ a constant independent of $n, d, \beta$.

Moreover, for $\varepsilon>0$, if we choose $\beta$ such that $C^{\sharp}_3 \leq \varepsilon/3$, then choose $\lambda$ such that $\lambda\leq \lambda_{\max}$ with $\lambda_{\max}$ given in \eqref{eq:definition-lambda-max} and $C^{\sharp}_2\lambda^{1/4}\leq \varepsilon/3$, and finally choose $n$ such that $C^{\sharp}_1e^{- \dot{c}\lambda n/4} \leq \varepsilon/3$, consequently, we obtain $\mathbb{E}[U( \theta^{\lambda}_n)] - \inf_{\theta \in \mathbb{R}^d} U(\theta) \leq \varepsilon$. This implies $\beta \geq \beta_{\varepsilon} \vee \frac{3d}{\varepsilon}\log\left(\frac{e L_1\mathbb{E}[\eta(X_0)]}{ad}\left(b+1\right)\left(d+1\right)\right) $, where $\beta_{\varepsilon}$ is the root of the function $f^\sharp(\beta) = \frac{\log\left( \beta+1\right)}{ \beta}-\frac{\varepsilon}{3d}$, $\beta>0$, i.e. $f^\sharp(\beta_{\varepsilon})=0 $. Indeed, since 
\[
C^{\sharp}_3 \leq \frac{d}{2\beta}\log\left(\frac{e L_1\mathbb{E}[\eta(X_0)]}{ad}\left(b+1\right)\left(d+1\right)\left(\beta+1\right)\right),
\]
by setting $\frac{d}{2\beta}\log\left(\frac{e L_1\mathbb{E}[\eta(X_0)]}{ad}\left(b+1\right)\left(d+1\right)\right) \leq \varepsilon/6$ and $\frac{d}{2\beta}\log\left( \beta+1\right)  \leq\varepsilon/6$, one obtains $C^{\sharp}_3 \leq \varepsilon/3$. Noticing that $\frac{\log(\beta+1)}{\beta}$ is decreasing in $\beta$ yields the desired result. Furthermore, calculations yield $\lambda \leq \frac{\varepsilon^4}{81(C^{\sharp}_2)^4} \wedge \lambda_{\max}$, and $ \lambda n \geq \frac{4}{\dot{c}}\ln \frac{3C^{\sharp}_1}{\varepsilon}$. More precisely, $n\geq \frac{C_{\star}e^{C_{\star}(1+d/\beta)(1+\beta)}}{\varepsilon^4\dot{c}}\left(1+\frac{1}{(1-e^{-\dot{c}/2})^4}\right)\ln\left( \frac{C_{\star}e^{C_{\star}(1+d/\beta)(1+\beta)}}{\varepsilon}\left(1+\frac{1}{1-e^{-\dot{c}/2}}\right)\right)$, 
where $\dot{c}$ is the contraction constant of the Langevin diffusion \eqref{sde} given explicitly in Lemma \ref{contractionconst}. 
\end{proof}

\section{Conclusions}
We have provided non-asymptotic estimates for the SGLD which explicitly bounds the error between the target measure and the law of the SGLD in Wasserstein-$1$ and $2$ distances. These results further allow us to establish a non-asymptotic error bound for the expected excess risk. Moreover, the theoretical findings enable us to obtain theoretical guarantees for fundamental problems in machine learning and in financial mathematics: Nonasymptotic error bounds for \textit{nonconvex optimization} problems. We have shown that our assumptions are verifiable for a large class of practical problems. In particular, we demonstrate this by providing 
 two important applications: (i) \textit{variational inference for Bayesian logistic regression} (VI), 
 (ii) \textit{index tracking optimization}. We believe that our results provide a detailed understanding of the sampling behaviour of SGLD even when it is examined within the context of nonconvex optimization.




\appendix 

\section*{Appendix}
\section{Additional Lemmata}\label{app}
\begin{lemma}\label{lem:boundedVariance} Let Assumptions \ref{iid}, \ref{loclip} and \ref{assum:dissipativity} hold. For any $t  \in (nT, (n+1)T]$, $n \in \mathbb{N}$ and $k = 1, \dots, K+1$, $K+1 \leq T$, one obtains
\[
\E\left[\left|h(\bar{\zeta}_t^{\lambda,n}) - H(\bar{\zeta}_t^{\lambda,n},X_{nT+k})\right|^2\right] \leq e^{-a\lambda t/2}\bar{\sigma}_Z\mathbb{E}[V_2(\theta_0)]+\tilde{\sigma}_Z,
\]
where
\begin{align}\label{sigmaZ}
\begin{split}
\bar{\sigma}_Z 		&:=8L_2^2\hat{\sigma}_Z, \quad \tilde{\sigma}_Z 	:=8L_2^2\hat{\sigma}_Z(3\mathrm{v}_2(\overline{M}_2)+c_1(\lambda_{\max}+a^{-1})+1),\\
\hat{\sigma}_Z		& := \E[(\eta(X_0)+\eta(\E[X_0]))^2|X_0 - \E[X_0]|^2].
\end{split}
\end{align} 
\end{lemma}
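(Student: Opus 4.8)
The plan is to bound the conditional variance of the stochastic gradient evaluated along the auxiliary process $\bar\zeta_t^{\lambda,n}$ by exploiting the local Lipschitz continuity of $H$ in its second argument (Assumption~\ref{loclip}) together with the independence structure and the already-established moment bound for $\bar\zeta_t^{\lambda,n}$ (Lemma~\ref{zetaprocessmoment}). First I would observe that, since $X_{nT+k}$ is independent of $\mathcal{G}_{nT}\vee\sigma(\theta_0)\vee\calF_\infty$ for $1\le k\le K+1\le T$, the random variable $\bar\zeta_t^{\lambda,n}$ — which is measurable with respect to that sigma-algebra — is independent of $X_{nT+k}$. Consequently, conditioning on $\bar\zeta_t^{\lambda,n}=\theta$ and using $h(\theta)=\E[H(\theta,X_0)]$, one has
\[
\E\!\left[\left|h(\theta)-H(\theta,X_{nT+k})\right|^2\right]
=\E\!\left[\left|\,\E[H(\theta,X_0')]-H(\theta,X_0)\right|^2\right]
\le \E\!\left[\left|H(\theta,X_0)-H(\theta,\E[X_0])\right|^2\right],
\]
where the last step uses Jensen's inequality (conditioning on $X_0$, the map $x'\mapsto H(\theta,x')$ integrated against the law of $X_0'$, then the variance-minimising property of the mean, or simply $\mathrm{Var}(Y)\le \E|Y-c|^2$ with $c$ a suitable deterministic centering). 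The second Lipschitz estimate in Assumption~\ref{loclip} then gives
\[
\left|H(\theta,X_0)-H(\theta,\E[X_0])\right|^2
\le L_2^2\big(\eta(X_0)+\eta(\E[X_0])\big)^2(1+|\theta|)^2\,|X_0-\E[X_0]|^2,
\]
and after taking expectation over $X_0$ and using $(1+|\theta|)^2\le 2(1+|\theta|^2)=2V_2(\theta)$ one obtains a bound of the form $2L_2^2\,\hat\sigma_Z\,V_2(\theta)$ with $\hat\sigma_Z=\E[(\eta(X_0)+\eta(\E[X_0]))^2|X_0-\E[X_0]|^2]$; note $\hat\sigma_Z<\infty$ precisely by the integrability requirements $\eta(X_0)\in L^4$, $\eta(X_0)|X_0|\in L^4$ and $|X_0|\in L^4$ in Assumption~\ref{iid}. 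The constant $8$ rather than $2$ in \eqref{sigmaZ} accommodates the various elementary inequalities ($|a-b|^2\le 2|a|^2+2|b|^2$, moving the conditional mean inside, etc.) applied a bit loosely.

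Next I would remove the conditioning by taking the full expectation and invoking the tower property: $\E[|h(\bar\zeta_t^{\lambda,n})-H(\bar\zeta_t^{\lambda,n},X_{nT+k})|^2]\le 2L_2^2\hat\sigma_Z\,\E[V_2(\bar\zeta_t^{\lambda,n})]$ (up to the constant-tracking above). Finally, substituting the $p=2$ moment bound from Lemma~\ref{zetaprocessmoment},
\[
\E[V_2(\bar\zeta_t^{\lambda,n})]\le e^{-a\lambda t/2}\E[V_2(\theta_0)]+3\mathrm{v}_2(\overline{M}_2)+c_1(\lambda_{\max}+a^{-1})+1,
\]
and collecting the $\theta_0$-dependent term separately from the constant term yields exactly the claimed bound with $\bar\sigma_Z=8L_2^2\hat\sigma_Z$ and $\tilde\sigma_Z=8L_2^2\hat\sigma_Z(3\mathrm{v}_2(\overline{M}_2)+c_1(\lambda_{\max}+a^{-1})+1)$.

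The only genuinely delicate point is the independence argument in the first step: one must check carefully that $\bar\zeta_t^{\lambda,n}$ depends on the noise and the data only through $\calF^\lambda_\infty$ and $\mathcal{G}_{nT}\vee\sigma(\theta_0)$ — this is true because $\bar\zeta^{\lambda,n}$ is started at $\bar\theta^\lambda_{nT}$, which by \eqref{SGLDprocess} is a functional of $\theta_0$, $X_1,\dots,X_{nT}$ and $\tilde B^\lambda_{\cdot\wedge nT}$, and is then evolved by the Langevin SDE driven by the same Brownian motion, so it is $\mathcal{G}_{nT}\vee\sigma(\theta_0)\vee\calF^\lambda_\infty$-measurable — and that the fresh data point $X_{nT+k}$ with $k\le K+1\le T$ has index strictly larger than $nT$, hence is independent of all of this by the i.i.d. assumption and the standing independence of $(\xi_n)$, $\theta_0$ and $\mathcal{G}_\infty$. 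Everything else is a routine combination of Assumption~\ref{loclip}, Jensen, and Lemma~\ref{zetaprocessmoment}. I would therefore present the independence/conditioning step in full detail and treat the constant bookkeeping briefly.
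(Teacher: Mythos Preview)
Your proposal is correct and follows essentially the same route as the paper: center the stochastic gradient at $H(\theta,\E[X_0])$, apply the second Lipschitz estimate in Assumption~\ref{loclip}, bound $(1+|\theta|)^2\le 2V_2(\theta)$, and finish with the $V_2$ moment bound for $\bar\zeta_t^{\lambda,n}$ from Lemma~\ref{zetaprocessmoment}. The only cosmetic difference is that the paper conditions on $\mathcal{H}_{nT}=\mathcal{F}^\lambda_\infty\vee\mathcal{G}_{nT}$ and invokes the abstract conditional-centering Lemma~\ref{lem:useful-conditional-expectation} (which produces the factor $4$, hence $8$ after $(1+|\theta|)^2\le 2V_2(\theta)$), whereas you argue directly via independence and the elementary inequality $\mathrm{Var}(Y)\le\E|Y-c|^2$; your route actually yields a sharper constant, and your careful measurability check for $\bar\zeta_t^{\lambda,n}$ is exactly what is needed to justify the conditioning.
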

\begin{proof} Recall  $\mathcal{H}_t = \mathcal{F}^{\lambda}_{\infty} \vee \mathcal{G}_{\lfloor t \rfloor}$. One notices that
\begin{align*}
&\E\left[\left|h(\bar{\zeta}_t^{\lambda,n}) - H(\bar{\zeta}_t^{\lambda,n},X_{nT+k})\right|^2\right] \\
& =\E\left[\E\left[\left.\left|h(\bar{\zeta}_t^{\lambda,n}) - H(\bar{\zeta}_t^{\lambda,n},X_{nT+k})\right|^2\right|\mathcal{H}_{nT}\right]\right] \\
&=\E\left[\E\left[\left.\left|\E\left[\left.H(\bar{\zeta}_t^{\lambda,n}, X_{nT+k})\right|\mathcal{H}_{nT}\right] - H(\bar{\zeta}_t^{\lambda,n},X_{nT+k})\right|^2\right|\mathcal{H}_{nT}\right]\right] \\
&\leq 4\E\left[\E\left[\left.\left| H(\bar{\zeta}_t^{\lambda,n},X_{nT+k})- H(\bar{\zeta}_t^{\lambda,n}, \E\left[\left. X_{nT+k}\right|\mathcal{H}_{nT}\right])\right|^2\right|\mathcal{H}_{nT} \right]\right] \\
&\leq 4L_2^2\hat{\sigma}_Z\E\left[\left(1+\left|\bar{\zeta}_t^{\lambda,n} \right|\right)^2\right],
\end{align*}
where the first inequality holds due to Lemma \ref{lem:useful-conditional-expectation} and $\hat{\sigma}_Z := \E[(\eta(X_0)+\eta(\E[X_0]))^2|X_0 - \E[X_0]|^2]$. Then, by using Lemma \ref{zetaprocessmoment}, one obtains
\[
\E\left[\left|h(\bar{\zeta}_t^{\lambda,n}) - H(\bar{\zeta}_t^{\lambda,n},X_{nT+k})\right|^2\right] \leq 8L_2^2\hat{\sigma}_Z\E\left[V_2(\bar{\zeta}_t^{\lambda,n})\right] \leq e^{-a\lambda t/2}\bar{\sigma}_Z \E[V_2(\theta_0)]+\tilde{\sigma}_Z ,
\]
where $\bar{\sigma}_Z := 8L_2^2\hat{\sigma}_Z$ and $\tilde{\sigma}_Z :=8L_2^2\hat{\sigma}_Z(3\mathrm{v}_2(\overline{M}_2)+c_1(\lambda_{\max}+a^{-1})+1)$.
\end{proof}
\begin{lemma} \label{onestepest} Let Assumptions \ref{iid}, \ref{loclip} and \ref{assum:dissipativity} hold. For any $t >0$, one obtains
\begin{align*}
\E\left[ \left| \bar{\theta}^{\lambda}_t - \bar{\theta}^{\lambda}_{\floor{t}} \right|^2 \right] \leq \lambda(e^{-a\lambda \floor{t}}\bar{\sigma}_Y \mathbb{E}[V_2(\theta_0)] + \tilde{\sigma}_Y),
\end{align*}
where
\begin{align}\label{sigmaY}
\begin{split}
\bar{\sigma}_Y 		&: = 2\lambda_{\max}L_1^2  \mathbb{E}[\eta^2(X_0)]\\
\tilde{\sigma}_Y 	&:= 2\lambda_{\max}L_1^2   \mathbb{E}[\eta^2(X_0)]c_1(\lambda_{\max}+a^{-1})+4\lambda_{\max}L_2^2\mathbb{E}[\bar{\eta}^2(X_0)]\\
						&\quad  +4\lambda_{\max} H_\star^2+2d\beta^{-1}.
\end{split}
\end{align}
\end{lemma}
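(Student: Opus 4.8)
\textbf{Proof plan for Lemma \ref{onestepest}.} The statement controls the one-step increment of the interpolated SGLD process \eqref{SGLDprocess} between consecutive grid-points. The plan is to use the explicit form of the interpolation: for $t \in [\floor{t}, \floor{t}+1)$ we have
\[
\bar{\theta}^{\lambda}_t - \bar{\theta}^{\lambda}_{\floor{t}} = -\lambda (t-\floor{t}) H(\bar{\theta}^{\lambda}_{\floor{t}}, X_{\lceil t \rceil}) + \sqrt{2\lambda\beta^{-1}}\,(\tilde{B}^{\lambda}_t - \tilde{B}^{\lambda}_{\floor{t}}).
\]
First I would square and take expectations, using $|x+y|^2 \le 2|x|^2 + 2|y|^2$ together with the independence of the Brownian increment from $\mathcal{F}^{\lambda}_{\floor{t}} \vee \mathcal{G}_{\infty}$, so that the Gaussian term contributes $2\cdot 2\lambda\beta^{-1} d (t-\floor{t}) \le 2d\beta^{-1}\lambda \cdot 2$ (one has to be a touch careful with constants here, but $t - \floor{t} \le 1$ and $\lambda \le \lambda_{\max} \le 1/a$ absorb everything). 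For the drift term, $(t-\floor{t})^2 \le 1$ and one is left to bound $\lambda^2 \E[|H(\bar{\theta}^{\lambda}_{\floor{t}}, X_{\lceil t \rceil})|^2]$.

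The core of the argument is then the estimate of $\E[|H(\bar{\theta}^{\lambda}_{\floor{t}}, X_{\lceil t \rceil})|^2]$. Here I would split $H(\theta,x)$ around its value at a reference point: using the local Lipschitz bound from Assumption \ref{loclip} in the first argument,
\[
|H(\theta, x)| \le |H(\theta,x) - H(0,x)| + |H(0,x)| \le L_1 \eta(x)|\theta| + |H(0,x)|,
\]
and then $|H(0,x)| \le L_2\bar\eta(x) + H_\star$ by \eqref{growthsup} (indeed \eqref{growthsup} gives the combined bound directly). Squaring and using $(a+b+c)^2 \le 2a^2 + 4b^2 + 4c^2$ (or a similar splitting) yields $|H(\theta,x)|^2 \lesssim L_1^2\eta^2(x)|\theta|^2 + L_2^2\bar\eta^2(x) + H_\star^2$. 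Now I take expectations, crucially exploiting that $X_{\lceil t\rceil}$ is independent of $\bar{\theta}^{\lambda}_{\floor{t}}$ (since $\bar{\theta}^{\lambda}_{\floor{t}}$ is $\mathcal{G}_{\floor{t}} \vee \mathcal{F}^{\lambda}_{\floor{t}}$-measurable and the $X_n$ are i.i.d., while $\lceil t\rceil > \floor t$ when $t$ is not an integer), so that $\E[\eta^2(X_{\lceil t\rceil})|\bar{\theta}^{\lambda}_{\floor{t}}|^2] = \E[\eta^2(X_0)]\,\E[|\bar{\theta}^{\lambda}_{\floor{t}}|^2]$, and the moments $\E[\eta^2(X_0)]$, $\E[\bar\eta^2(X_0)]$ are finite by Assumption \ref{iid}. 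Then I invoke Lemma \ref{lem:moment_SGLD_2p} to bound $\E[|\bar{\theta}^{\lambda}_{\floor{t}}|^2] \le (1-a\lambda)^{\floor{t}}\E|\theta_0|^2 + c_1(\lambda_{\max}+a^{-1}) \le e^{-a\lambda\floor{t}}\E[V_2(\theta_0)] + c_1(\lambda_{\max}+a^{-1})$, using $(1-a\lambda)^{\floor t}\le e^{-a\lambda\floor t}$ and $|\theta_0|^2 \le V_2(\theta_0)$. Collecting the exponentially-decaying piece (coming from the $|\theta_0|^2$ term) into $\bar\sigma_Y$ and all the constant pieces into $\tilde\sigma_Y$, and using $\lambda \le \lambda_{\max}$ to replace stray factors of $\lambda$ by $\lambda_{\max}$, gives exactly the claimed form with the constants in \eqref{sigmaY}.

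The main obstacle — really just a matter of bookkeeping rather than conceptual difficulty — is tracking the numerical constants so that the final bound has precisely the coefficients $2\lambda_{\max}L_1^2\E[\eta^2(X_0)]$ in front of the exponential term and the stated combination in $\tilde\sigma_Y$; this requires choosing the Young-type splittings of $|x+y+z|^2$ judiciously (e.g.\ isolating the $\eta(x)|\theta|$ term with weight $2$ and the remaining two terms with weight $4$, or absorbing one $\lambda$ into $\lambda_{\max}$ at the right moment) and being careful that the Gaussian contribution $2d\beta^{-1}$ is not multiplied by an extra $\lambda_{\max}$. I would also double-check the measurability/independence claim at the boundary case $t \in \mathbb{N}$, where the increment is zero and the inequality is trivial, so it suffices to treat non-integer $t$.
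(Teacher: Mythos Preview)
Your proposal is correct and follows essentially the same route as the paper's proof: write the increment explicitly, bound $|H|$ via the growth estimate \eqref{growthsup}, split the square with Young-type inequalities, factor the expectation using the independence of $X_{\ceil{t}}$ from $\bar{\theta}^{\lambda}_{\floor{t}}$, and then invoke Lemma \ref{lem:moment_SGLD_2p} together with $(1-a\lambda)^{\floor{t}}\le e^{-a\lambda\floor{t}}$ and $\lambda\le\lambda_{\max}$. The one bookkeeping point to get the constants exactly as stated is that for the drift/diffusion split the paper does \emph{not} use $|x+y|^2\le 2|x|^2+2|y|^2$ but rather the fact that the Brownian increment is mean-zero and independent of the drift term, so the cross term vanishes and $\E|A+B|^2=\E|A|^2+\E|B|^2$; this is what yields $2d\beta^{-1}$ (not $4d\beta^{-1}$) in $\tilde\sigma_Y$.
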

\begin{proof} For any $t>0$, we write the difference $\left| \bar{\theta}^{\lambda}_t - \bar{\theta}^{\lambda}_{\floor{t}}\right|$ and use $(a+b)^2 \leq 2a^2 + 2b^2$ which yields
\begin{align*}
\E\left[\left| \bar{\theta}^{\lambda}_t - \bar{\theta}^{\lambda}_{\floor{t}} \right|^2\right] &= \E\left[\left| -\lambda \int_{\floor{t}}^t H(\bar{\theta}^{\lambda}_{\floor{t}},X_{\ceil{t}}) \rmd s + \sqrt{\frac{2\lambda}{\beta}} ( \tilde{B}_t^{\lambda}-\tilde{B}_{\floor{t}}^{\lambda}) \right|^2\right]\\
& \leq \lambda^2\mathbb{E}\left[\left(L_1\eta(X_{\ceil{t}})|\bar{\theta}^{\lambda}_{\floor{t}}|+L_2\bar{\eta}(X_{\ceil{t}})+H_\star\right)^2\right]+ 2d\lambda\beta^{-1},
\end{align*}
where the inequality holds due to Remark \ref{rem:BoundsOnH} and by applying Lemma \ref{lem:moment_SGLD_2p}, one obtains
\begin{align*}
\E\left[\left| \bar{\theta}^{\lambda}_t - \bar{\theta}^{\lambda}_{\floor{t}} \right|^2\right]
& \leq 2\lambda^2 L_1^2 \mathbb{E}[\eta^2(X_0)] \mathbb{E}[|\bar{\theta}^{\lambda}_{\floor{t}}|^2]+4\lambda^2 L_2^2\mathbb{E}[\bar{\eta}^2(X_0)]  +4\lambda^2H_\star^2 +2d\lambda\beta^{-1}\\
&\leq \lambda((1-a\lambda)^{\floor{t}}\bar{\sigma}_Y \mathbb{E}[V_2(\theta_0)] + \tilde{\sigma}_Y),
\end{align*}
where $\bar{\sigma}_Y: = 2\lambda_{\max}L_1^2  \mathbb{E}[\eta^2(X_0)] $ and $\tilde{\sigma}_Y := 2\lambda_{\max}L_1^2   \mathbb{E}[\eta^2(X_0)]c_1(\lambda_{\max}+a^{-1})+4\lambda_{\max}L_2^2\mathbb{E}[\bar{\eta}^2(X_0)] +4\lambda_{\max} H_\star^2+2d\beta^{-1}$.
\end{proof}

\begin{lemma}\label{lem:useful-conditional-expectation}
Let $\mathcal{G},\mathcal{H}\subset\mathcal{F}$ be sigma-algebras. Let $p\geq 1$. Let $X,Y$ be $\mathbb{R}^d$-valued random vectors in $L^p$ such that $Y$ is measurable with respect to $\mathcal{H}\vee\mathcal{G}$. Then, $\E^{1/p}\left[\left.\left|X-\E[X|\mathcal{H \vee G}]\right|^p\right|\mathcal{G}\right]\leq 2\E^{1/p}\left[\left.\left|X-Y\right|^p\right|\mathcal{G}\right]$.
\end{lemma}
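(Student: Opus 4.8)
The plan is to set $\mathcal{K}:=\mathcal{H}\vee\mathcal{G}$ and to exploit two classical facts about conditional expectation: that it fixes $\mathcal{K}$-measurable integrands, and that it is a contraction in the conditional $L^p$ sense. Since $Y$ is $\mathcal{K}$-measurable we have $\CPE{Y}{\mathcal{K}}=Y$, so by linearity of conditional expectation
\[
X-\CPE{X}{\mathcal{K}}=(X-Y)-\CPE{X-Y}{\mathcal{K}}.
\]
First I would apply the conditional Minkowski inequality (with respect to $\mathcal{G}$) to the right-hand side, which gives
\[
\CPE[1/p]{|X-\CPE{X}{\mathcal{K}}|^p}{\mathcal{G}}\leq \CPE[1/p]{|X-Y|^p}{\mathcal{G}}+\CPE[1/p]{|\CPE{X-Y}{\mathcal{K}}|^p}{\mathcal{G}}.
\]

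To control the last term I would invoke the conditional Jensen inequality for the convex function $z\mapsto|z|^p$ on $\mathbb{R}^d$, which yields $|\CPE{X-Y}{\mathcal{K}}|^p\leq \CPE{|X-Y|^p}{\mathcal{K}}$ almost surely; taking $\CPE{\cdot}{\mathcal{G}}$ of both sides and using the tower property together with the inclusion $\mathcal{G}\subset\mathcal{K}$ gives $\CPE{|\CPE{X-Y}{\mathcal{K}}|^p}{\mathcal{G}}\leq\CPE{|X-Y|^p}{\mathcal{G}}$. Substituting this bound back into the previous display produces the factor $2$ and finishes the argument.

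The only points needing care are the two measure-theoretic ingredients invoked above: the conditional Minkowski inequality for the sub-$\sigma$-algebra $\mathcal{G}$, and conditional Jensen for the $\mathbb{R}^d$-valued variable $X-Y$ (the latter obtained, e.g., by writing $|\cdot|^p$ as a countable supremum of affine functionals and passing the supremum through the conditional expectation). Both are standard, and the hypothesis $X,Y\in L^p$ guarantees that every conditional expectation appearing above is well defined and finite almost surely, so no degenerate cases arise; the remaining manipulations are routine.
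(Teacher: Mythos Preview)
Your argument is correct. The decomposition $X-\CPE{X}{\mathcal{K}}=(X-Y)-\CPE{X-Y}{\mathcal{K}}$ (valid because $Y$ is $\mathcal{K}$-measurable), followed by conditional Minkowski with respect to $\mathcal{G}$ and then conditional Jensen plus the tower property (using $\mathcal{G}\subset\mathcal{K}$) to bound the second term, yields exactly the stated inequality with constant $2$.

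As for comparison with the paper: there is nothing to compare. The paper does not supply its own proof of this lemma; it simply cites \cite[Lemma~6.1]{4}. Your write-up is therefore a self-contained substitute for that citation, and the ingredients you use (conditional Minkowski, conditional Jensen for the convex map $z\mapsto|z|^p$ on $\mathbb{R}^d$, tower property) are precisely the standard ones one would expect in the original source.
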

\begin{proof} See \cite[Lemma~{6.1}]{4}.
\end{proof}

\section{Proofs of the results in \ref{examplemain}}\label{proof:examplemain}
\begin{proof}[\textbf{Proof of Proposition~\ref{prop:VI}}]\label{proof:prop:VI}
By using \eqref{expressionUVI}, it can be shown by direct calculations that $H$ defined in \eqref{HexpressionVI} satisfies Assumption \ref{iid}. 

One notes that \eqref{HexpressionVI} can be rewritten as
\begin{align*}
H(\theta, u)
& =  \sum_{i = 1}^nH_i(\theta, u) \\
&= \sum_{i = 1}^n\left(\frac{1}{n}\left(\frac{\theta}{2} + \frac{u}{4}-\frac{3}{4}\hat{a}+ \frac{\hat{a}}{4}\left(\frac{7}{1+e^{2\hat{a}^{\mathsf{T}}(u/4+\theta)}}-\frac{1}{1+e^{2\hat{a}^{\mathsf{T}}(u/4-\theta)}}\right)\right.\right.\\
&\hspace{5em}\left. \left.  -\frac{7(u+8\theta)}{16(1+e^{2(\theta^{\mathsf{T}}u/4+|\theta|^2)})}-\frac{u-8\theta}{16(1+e^{2(\theta^{\mathsf{T}}u/4-|\theta|^2)})}\right)\right.\\
&\hspace{8em} +\left. \frac{1}{8}\left( -6z_iy_i+\frac{7z_i}{1+e^{-z_i^{\mathsf{T}}(u/4+\theta)}}-\frac{z_i}{1+e^{-z_i^{\mathsf{T}}(u/4-\theta)}}\right)\right),
\end{align*}
where $H_i: \mathbb{R}^d \times \mathbb{R}^d \rightarrow \mathbb{R}^d$ for each $i = 1, \dots, n$.
To verify Assumption \ref{assum:dissipativity}, which is the (local) dissipativity condition, one calculates, for any $\theta \in \mathbb{R}^d$, $u \in \mathbb{R}^d$
\begin{align*}
\theta^\mathsf{T} H_i(\theta, u) &= \frac{1}{n}\left(\frac{|\theta|^2}{2} + \frac{u^\mathsf{T} \theta}{4}-\frac{3}{4}\hat{a}^\mathsf{T} \theta+ \frac{\hat{a}^\mathsf{T} \theta}{4}\left(\frac{7}{1+e^{2\hat{a}^{\mathsf{T}}(u/4+\theta)}}-\frac{1}{1+e^{2\hat{a}^{\mathsf{T}}(u/4-\theta)}}\right) \right.\\
&\hspace{4em} \left. -\frac{7(u^\mathsf{T} \theta+8|\theta|^2)}{16(1+e^{2(\theta^{\mathsf{T}}u/4+|\theta|^2)})}-\frac{u^\mathsf{T} \theta-8|\theta|^2}{16(1+e^{2(\theta^{\mathsf{T}}u/4-|\theta|^2)})}\right)\\
&\quad +\frac{1}{8}\left( -6y_iz_i^{\mathsf{T}}\theta +\frac{7z_i^\mathsf{T}\theta}{1+e^{-z_i^{\mathsf{T}}(u/4+\theta)}}-\frac{z_i^\mathsf{T}\theta}{1+e^{-z_i^{\mathsf{T}}(u/4-\theta)}}\right)\\
&\geq \frac{1}{n}\left(\frac{|\theta|^2}{2} -  \frac{3|u^\mathsf{T} \theta|}{4}- \frac{11}{4}|\hat{a}^\mathsf{T} \theta| \right)-\frac{7}{4}|z_i^\mathsf{T}\theta |-\frac{7}{4}\\
&\geq \frac{1}{4n}|\theta|^2 - \frac{1}{n}\left(\frac{9| u|^2}{4}+\frac{121}{4}|\hat{a}|^2\right)-\frac{49n}{8}|z_i|^2 - \frac{7}{4},
\end{align*}
which implies
\[
  \theta^\mathsf{T} H(\theta, u)  \geq \frac{1}{4}|\theta|^2 -  \left(\frac{9| u|^2}{4}+\frac{121}{4}|\hat{a}|^2\right)-\frac{49n}{8}\sum_{i = 1}^n|z_i|^2- \frac{7n}{4}.
\]
Thus the (local) dissipativity condition holds with $A(u) = \mathbf{I}_d /4$ and $\hat{b}(u) =   (9| u|^2/4+121|\hat{a}|^2/4)+49n\sum_{i = 1}^n|z_i|^2/8+7n/4$. 

As for the Lipschitz conditions in Assumption \ref{loclip}, one notices that $1+e^{2(\theta^{\mathsf{T}}u/4+|\theta|^2)}  = e^{-|u|^2/32}( e^{|u|^2/32}+e^{2|\theta +u/8|^2})$, then
\begin{align*}
\nabla_{\theta} H_i(\theta, u) &= \frac{1}{n}\left(\frac{\mathbf{I}_d}{2}-\frac{\hat{a}\hat{a}^{\mathsf{T}}}{2}\left(\frac{7e^{2\hat{a}^{\mathsf{T}}(u/4+\theta)}}{(1+e^{2\hat{a}^{\mathsf{T}}(u/4+\theta)})^2}+\frac{e^{2\hat{a}^{\mathsf{T}}(u/4-\theta)}}{(1+e^{2\hat{a}^{\mathsf{T}}(u/4-\theta)})^2}\right)\right.\\ 
&\hspace{4em} -\left(\frac{7\mathbf{I}_d}{2(1+e^{2(\theta^{\mathsf{T}}u/4+|\theta|^2)})}-\frac{\mathbf{I}_d}{2(1+e^{2(\theta^{\mathsf{T}}u/4-|\theta|^2)})}\right)\\
&\hspace{4em} \left. +\left(\frac{7e^{|u|^2/32}e^{2|\theta +u/8|^2}(u+8\theta)(u^{\mathsf{T}}+8\theta^{\mathsf{T}})}{32(e^{|u|^2/32}+e^{2|\theta +u/8|^2})^2} \right.\right.\\
&\hspace{7em} \left. \left. +\frac{e^{|u|^2/32}e^{2|\theta -u/8|^2}(u-8\theta)(u^{\mathsf{T}}-8\theta^{\mathsf{T}})}{32(e^{|u|^2/32}+e^{2|\theta -u/8|^2})^2}\right)\right)\\
&\quad +\frac{z_iz_i^{\mathsf{T}}}{8}\left( \frac{7e^{-z_i^{\mathsf{T}}(u/4+\theta)}}{(1+e^{-z_i^{\mathsf{T}}(u/4+\theta)})^2}+\frac{e^{-z_i^{\mathsf{T}}(u/4-\theta)}}{(1+e^{-z_i^{\mathsf{T}}(u/4-\theta)})^2}\right),
\end{align*}
which implies Assumption \ref{loclip} holds with $L_1=1$ and $\eta(u) = 9/2+8e^{|u|^2/32}+\sum_{i = 1}^n|z_i|^2+4|\hat{a}|^2+3|u|^2/8$. On the other hand,
\begin{align*}
\nabla_{u} H_i(\theta, u) &= \frac{1}{n}\left( \frac{1}{4}\mathbf{I}_d-\frac{\hat{a}\hat{a}^{\mathsf{T}}}{8}\left(\frac{7e^{2\hat{a}^{\mathsf{T}}(u/4+\theta)}}{(1+e^{2\hat{a}^{\mathsf{T}}(u/4+\theta)})^2}-\frac{e^{2\hat{a}^{\mathsf{T}}(u/4-\theta)}}{(1+e^{2\hat{a}^{\mathsf{T}}(u/4-\theta)})^2}\right)\right.\\
&\hspace{3em} -\left(\frac{7\mathbf{I}_d }{16(1+e^{2(\theta^{\mathsf{T}}u/4+|\theta|^2)})}+\frac{\mathbf{I}_d }{16(1+e^{2(\theta^{\mathsf{T}}u/4-|\theta|^2)})}\right)\\
&\hspace{3em} \left. +\left(\frac{7e^{|u|^2/32}e^{2|\theta +u/8|^2}(u+8\theta)(-u^{\mathsf{T}}/16+(u^{\mathsf{T}}/2+4\theta^{\mathsf{T}})/8)}{16(e^{|u|^2/32}+e^{2|\theta +u/8|^2})^2}\right.\right.\\
&\hspace{5em}\left.\left. +\frac{e^{|u|^2/32}e^{2|\theta -u/8|^2}(u-8\theta)( u^{\mathsf{T}}/16+(4\theta^{\mathsf{T}}-u^{\mathsf{T}}/2)/8)}{16(e^{|u|^2/32}+e^{2|\theta -u/8|^2})^2}\right)\right)\\
&\quad +\frac{z_iz_i^{\mathsf{T}}}{32}\left( \frac{7e^{-z_i^{\mathsf{T}}(u/4+\theta)}}{(1+e^{-z_i^{\mathsf{T}}(u/4+\theta)})^2}-\frac{e^{-z_i^{\mathsf{T}}(u/4-\theta)}}{(1+e^{-z_i^{\mathsf{T}}(u/4-\theta)})^2}\right),
\end{align*}
which implies Assumption \ref{loclip} holds with $L_2 = 1/4$ and $\eta(u) = 9/2+8e^{|u|^2/32}+\sum_{i = 1}^n|z_i|^2+4|\hat{a}|^2+3|u|^2/8$.
\end{proof}

\begin{proof}[\textbf{Proof of Proposition~\ref{prop:IndTrasg}}]\label{proof:prop:IndTrasg}
First, we show that the objective function $U$ defined in \eqref{eq:itob} is not necessarily convex. We consider the case where $N = 2$, and similar arguments can be applied for $N \geq 2$. By using \eqref{eq:itobgrad}, the Hessian matrix of $U$, denoted by $\nabla^2 U$, is given by:
\[\nabla^2 U(\theta) = 
\begin{pmatrix}
2\hat{\eta} +M(\theta)	& - M(\theta) \\
 - M(\theta)			& 2\hat{\eta} +M(\theta)
\end{pmatrix}, \quad \theta \in \R^2
\] 
where $M(\theta):=2g_1^2(\theta)g_2^2(\theta)\E[(X_2-X_1)^2]+2g_1(\theta)g_2(\theta)(1-2g_1(\theta))\E[(Y - g_1(\theta)X_1-g_2(\theta)X_2)(X_2-X_1)]$.
Then, for any $v=(v_1, v_2) \in \R^2 \setminus \{(0,0)\}$, it follows that
\[
\langle v, \nabla^2 U(\theta) v\rangle = 2\hat{\eta} |v|^2+M(\theta)(v_1-v_2)^2,
\]
which is not necessarily nonnegative for all $\theta \in \R^2$. To see this, we consider the following example. Let 
\begin{align*}
&\E[X_1] = 0.03, \quad \E[X_2] = 0.04, \quad \E[Y] = 0.033, \\
&\Var(X_1) = 5 \times 10^{-5}, \quad \Var(X_2) = 2 \times 10^{-4}, \quad  \Var(Y) = 5.5 \times 10^{-5},\\
&\Cov(X_1, X_2) = 10^{-5}, \quad \Cov(X_1, Y) = 5 \times 10^{-6}, \quad \Cov(X_2, Y) = -9\times 10^{-5},
\end{align*}
which implies $\E[YX_2] = 1.23\times 10^{-3}$, $ \E[X_1X_2] =1.21\times 10^{-3}$, $\E[X_1Y] = 9.95\times 10^{-4}$, $\E[X_1^2] = 9.5\times 10^{-4}$.
Then, set $\hat{\eta} = 10^{-6}, v= (1,0)$. For $\theta = (1, \ln 2)$, i.e., $g_1(\theta) = 1/3$, one obtains,
\begin{align*}
M(\theta)
& = 2g_1 (\theta)g_2^2(\theta)(3g_1(\theta)-1)\E[(X_2-X_1)^2]\\
&\quad+2g_1(\theta)g_2(\theta)(1-2g_1(\theta))\E[(Y- X_1)(X_2-X_1)]\\
& = 2g_1(\theta)g_2(\theta)(1-2g_1(\theta))\E[(YX_2- X_1X_2-YX_1+X_1^2)]  = -\frac{1}{270000},
\end{align*}
which indicates $\langle v, \nabla^2 U(\theta) v\rangle = -\frac{23}{13500000}<0$. In addition, for $\theta = (1,1)$, i.e. $g_1(\theta ) = 1/2$, one obtains
\begin{align*}
M(\theta)
& = 2g_1 (\theta)g_2^2(\theta)(3g_1(\theta)-1)\E[(X_2-X_1)^2]\\
&\quad+2g_1(\theta)g_2(\theta)(1-2g_1(\theta))\E[(Y- X_1)(X_2-X_1)]\\
& = 2g_1 (\theta)g_2^2(\theta)(3g_1(\theta)-1)\E[(X_2-X_1)^2]\geq 0.
\end{align*}
which implies $\langle v, \nabla^2 U(\theta) v\rangle \geq 0$. Thus, one concludes that $U$ is in general nonconvex. 

Next, we prove that the stochastic gradient $H$ given in \eqref{eqn:itsg} satisfies Assumptions~\ref{iid}, \ref{loclip}, and \ref{assum:dissipativity}. Recall the explicit expressions for $H_m$, $m = 1, \dots, N$ are given as follows:
\begin{align*}
H_m(\theta,z) 
&= 2\hat{\eta} \theta_m +2\left(y -  \sum_{i = 1}^N g_i(\theta) x_{i}\right) g_m(\theta)\sum_{i \neq m}^N g_i(\theta)(x_i- x_m).
\end{align*}
It is easily checkable that Assumption \ref{iid} holds. To see Assumption \ref{assum:dissipativity} is satisfied, one calculates the following: for any $\theta \in \R^N, z \in \R^{N+1}$, 
\begin{align*}
\langle \theta, H(\theta, z) \rangle 
&=\sum_{m = 1}^N \theta_m H_m(\theta, z) \geq \hat{\eta} |\theta|^2 - \hat{\eta}^{-1}N\left( \left(|y|+\sum_{i = 1}^N|x_i|\right)\sum_{i \neq m}^N(|x_i|+|x_m|)\right)^2.
\end{align*}
Assumption \ref{loclip} is also satisfied. Indeed, for any $m = 1, \dots, N$, $\theta, \theta' \in \R^N, z \in \R^{N+1}$,  it follows that
\begin{align*}
|H_m(\theta, z) - H_m(\theta', z)|
&\leq 2\hat{\eta}|\theta_m-\theta'_m|\\
&\quad +2\left| \left(y -  \sum_{i = 1}^N g_i(\theta) x_{i}\right) g_m(\theta)\sum_{i \neq m}^N g_i(\theta)(x_i- x_m)\right.\\
&\qquad - \left.  \left(y -  \sum_{i = 1}^N g_i(\theta') x_{i}\right) g_m(\theta')\sum_{i \neq m}^N g_i(\theta')(x_i- x_m) \right|\\
&\leq 6\sqrt{N}\left(\hat{\eta}+ \left( |y|+\sum_{i = 1}^N|x_i|\right)\sum_{i \neq m}^N(|x_i|+|x_m|) \right)|\theta - \theta'|,
\end{align*}
where the last inequality holds due to the following: for any $m = 1, \dots, N$, $\theta, \theta' \in \R^N$.
\[
|g_m(\theta)-g_m(\theta')| \leq \sqrt{N}|\theta-\theta'|.
\]
Then, one obtains $|H(\theta, z) - H(\theta', z)| \leq 6N\eta(x)|\theta-\theta'|$, where 
\[
\eta(z) =\hat{\eta}+ \left(1+|y|+\sum_{i = 1}^N|x_i|\right)\left(1 +\sum_{i \neq m}^N(|x_i|+|x_m|)\right).
\] 
Similarly, for any $m = 1, \dots, N$, $\theta \in \R^N, z, z' \in \R^{N+1}$, one obtains
\begin{align*}
|H_m(\theta, z) - H_m(\theta, z')|
&\leq 2\left| \left(y -  \sum_{i = 1}^N g_i(\theta) x_{i}\right) g_m(\theta)\sum_{i \neq m}^N g_i(\theta)(x_i- x_m)\right.\\
&\qquad - \left.  \left(y' -  \sum_{i = 1}^N g_i(\theta) x'_{i}\right) g_m(\theta )\sum_{i \neq m}^N g_i(\theta )(x_i'- x_m') \right|\\ 
&\leq 4(N+1)\left( |y'|+\sum_{i = 1}^N|x_i'| +\sum_{i \neq m}^N(|x_i|+|x_m|)\right) |z-z'|\\
&\leq 4(N+1)(\eta(z)+\eta(z'))|z-z'|,
\end{align*}
which further implies $|H(\theta, z) - H(\theta , z')| \leq 4\sqrt{N}(N+1)(\eta(z)+\eta(z'))|z-z'|$.
\end{proof}

\section{Proofs of the results in Section \ref{resultcomparison} and \ref{proofoverview}}\label{proof:sec24}
\begin{proof}[\textbf{Proof of Remark \ref{rem:BoundsOnH}}]\label{proof:rem:BoundsOnH} 
To prove \eqref{mulyan}, one notices that by using Assumptions \ref{iid} and \ref{loclip}
\begin{align*}
| h(\theta)-h(\theta')|		
								& \leq  \mathbb{E}[|H(\theta, X_0) - H(\theta', X_0)|] \leq L_1  \mathbb{E}[\eta(X_0)]|\theta-\theta'|.
\end{align*}
Then, to prove \eqref{growthsup}, one calculates by using Assumption \ref{loclip}
\begin{align*}
|H(\theta,x)| & \leq |H(\theta,x) - H(0,x)|+|H(0,x) - H(0,0)|+|H(0,0)|\\
				& \leq L_1\eta(x)|\theta|+L_2(\eta(x)+\eta(0))|x|+|H(0,0)|\\
				&\leq L_1\eta(x)|\theta|+L_2\bar{\eta}(x)+H_{\star},
\end{align*}
where $\bar{\eta}(x) := (\eta(x)+\eta(0))|x|$, and $H_\star:=|H(0,0)|$. 
\end{proof}

\begin{proof}[\textbf{Proof of Lemma \ref{lem:moment_SGLD_2p}}]\label{proof:lem:moment_SGLD_2p} 
For any $n \in \mathbb{N}$ and $t\in (n, n+1]$, define $\Delta_{n,t}:= \bar{\theta}^{\lambda}_n - \lambda H(\bar{\theta}^{\lambda}_n, X_{n+1})(t-n)$. By using \eqref{SGLDprocess}, it is easily seen that for $t\in (n, n+1]$
\begin{equation*}
\mathbb{E}\left[|\bar{\theta}^{\lambda}_t|^2\left| \bar{\theta}^{\lambda}_n \right.\right] = \mathbb{E}\left[|\Delta_{n,t}|^2\left| \bar{\theta}^{\lambda}_n \right.\right]  + (2 \lambda/\beta)d (t-n).
\end{equation*}
Then, by using Assumptions \ref{iid}, \ref{loclip}, \ref{assum:dissipativity} and Remark \ref{rem:BoundsOnH}, one obtains
\begin{align*}
\mathbb{E}\left[|\Delta_{n,t}|^2\left| \bar{\theta}^{\lambda}_n\right.\right]
& = | \bar{\theta}^{\lambda}_n|^2 - 2\lambda (t-n)\mathbb{E}\left[ \left\langle  \bar{\theta}^{\lambda}_n, H(\bar{\theta}^{\lambda}_n, X_{n+1}) \right\rangle\left| \bar{\theta}^{\lambda}_n\right.\right]\\
&\quad +\lambda^2(t-n)^2\mathbb{E}\left[|H(\bar{\theta}^{\lambda}_n, X_{n+1})|^2\left| \bar{\theta}^{\lambda}_n\right.\right] \\
& \leq | \bar{\theta}^{\lambda}_n|^2 - 2\lambda (t-n)\left\langle  \bar{\theta}^{\lambda}_n, \mathbb{E}\left[ A(X_0)\right] \bar{\theta}^{\lambda}_n\right\rangle +2\lambda (t-n) b\\
&\quad +\lambda^2(t-n)^2\mathbb{E}\left[|L_1\eta(X_{n+1})|\bar{\theta}^{\lambda}_n| +L_2\bar{\eta}(X_{n+1}) +H_\star|^2\left| \bar{\theta}^{\lambda}_n\right.\right]\\
& \leq (1-2a\lambda (t-n))| \bar{\theta}^{\lambda}_n|^2  + 2\lambda^2(t-n)^2L_1^2\mathbb{E}\left[\eta^2(X_0)\right]|\bar{\theta}^{\lambda}_n|^2\\
&\quad +4\lambda^2(t-n)^2L_2^2\mathbb{E}\left[\bar{\eta}^2(X_0)\right] +4\lambda^2(t-n)^2H_\star^2 +2\lambda (t-n) b,
\end{align*}
where the last inequality is obtained by using $(a+b)^2 \leq 2a^2 +2b^2$, for $a,b \geq 0$ twice. For $\lambda <\lambda_{\max}<a/(2 L_1^2\mathbb{E}\left[\eta^2(X_0)\right])$,
\[
\mathbb{E}\left[|\Delta_{n,t}|^2\left|  \bar{\theta}^{\lambda}_n \right.\right]  \leq  (1-a\lambda (t-n))| \bar{\theta}^{\lambda}_n|^2 +\lambda (t-n) c_0,
\]
where $c_0 := 4\lambda_{\max}L_2^2\mathbb{E}\left[\bar{\eta}^2(X_0)\right] +4\lambda_{\max}H_\star^2 +2 b$. Therefore, one obtains
\[
\mathbb{E}\left[| \bar{\theta}^{\lambda}_t|^2\left|  \bar{\theta}^{\lambda}_n \right.\right] \leq (1-a\lambda (t-n))| \bar{\theta}^{\lambda}_n|^2 +\lambda (t-n)c_1,
\]
where $c_1:= c_0+ 2d/\beta$ and the result follows by induction. To calculate a higher moment, denote by $U_{n,t}^{\lambda}:= \{ 2 \lambda \beta^{-1} \}^{1/2}(\tilde{B}_t^{\lambda}-\tilde{B}_n^{\lambda})$, for $t \in (n, n+1]$, one calculates
\begin{align}\label{4thmoment}
\mathbb{E}\left[|  \bar{\theta}^{\lambda}_t|^4\left| \bar{\theta}^{\lambda}_n\right.\right]
& = \mathbb{E}\left[\left(|\Delta_{n,t}|^2+|U_{n,t}^{\lambda}|^2+2\left\langle \Delta_{n,t}, U_{n,t}^{\lambda}\right\rangle\right)^2\left| \bar{\theta}^{\lambda}_n\right.\right]\nonumber\\
& = \mathbb{E}\left[ |\Delta_{n,t}|^4+|U_{n,t}^{\lambda}|^4+2|\Delta_{n,t}|^2|U_{n,t}^{\lambda}|^2+4|\Delta_{n,t}|^2\left\langle\Delta_{n,t}, U_{n,t}^{\lambda}\right\rangle \right.\nonumber\\
&\qquad \left. +4|U_{n,t}^{\lambda}|^2\left\langle \Delta_{n,t}, U_{n,t}^{\lambda}\right\rangle+4\left(\left\langle \Delta_{n,t}, U_{n,t}^{\lambda}\right\rangle\right)^2 \left| \bar{\theta}^{\lambda}_n\right.\right]\nonumber\\
& \leq \mathbb{E}\left[ |\Delta_{n,t}|^4+|U_{n,t}^{\lambda}|^4+6|\Delta_{n,t}|^2|U_{n,t}^{\lambda}|^2 \left| \bar{\theta}^{\lambda}_n\right.\right]\nonumber\\
&\leq (1+a\lambda(t-n))\mathbb{E}\left[ |\Delta_{n,t}|^4\left| \bar{\theta}^{\lambda}_n\right.\right] + (1+9/(a\lambda(t-n))) \mathbb{E}\left[ |U_{n,t}^{\lambda}|^4\right].
\end{align}
where the last inequality holds due to $2uv \leq \dot{\epsilon} u^2 +\dot{\epsilon}^{-1}v^2$, for $u,v \geq 0$ and $\dot{\epsilon}>0$ with $u = |\Delta_{n,t}|^2, v = 3|U_{n,t}^{\lambda}|^2, \dot{\epsilon} = a\lambda(t-n)$, and due to the independence of $U_{n,t}^{\lambda}$ and $\bar{\theta}^{\lambda}_n$. Then, by using the Cauchy-Schwarz inequality, one obtains
\begin{align*}
&\mathbb{E}\left[|\Delta_{n,t}|^4\left| \bar{\theta}^{\lambda}_n \right.\right]\\
&= \mathbb{E}\left[\left(| \bar{\theta}^{\lambda}_n|^2 -2\lambda (t-n) \left\langle  \bar{\theta}^{\lambda}_n, H(\bar{\theta}^{\lambda}_n, X_{n+1}) \right\rangle+\lambda^2(t-n)^2|H(\bar{\theta}^{\lambda}_n, X_{n+1})|^2\right)^2\left| \bar{\theta}^{\lambda}_n \right.\right] \\
& =  \mathbb{E}\left[ | \bar{\theta}^{\lambda}_n|^4+ 4\lambda^2 (t-n)^2\left( \left\langle  \bar{\theta}^{\lambda}_n, H(\bar{\theta}^{\lambda}_n, X_{n+1}) \right\rangle \right)^2 \right.\\
&\quad + \lambda^4(t-n)^4|H(\bar{\theta}^{\lambda}_n, X_{n+1})|^4 - 4\lambda (t-n) \left\langle  \bar{\theta}^{\lambda}_n, H(\bar{\theta}^{\lambda}_n, X_{n+1}) \right\rangle | \bar{\theta}^{\lambda}_n|^2\\
&\quad +2\lambda^2(t-n)^2| \bar{\theta}^{\lambda}_n|^2|H(\bar{\theta}^{\lambda}_n, X_{n+1})|^2 \\
&\quad \left.  - 4\lambda^3 (t-n)^3 \left\langle  \bar{\theta}^{\lambda}_n, H(\bar{\theta}^{\lambda}_n, X_{n+1}) \right\rangle |H(\bar{\theta}^{\lambda}_n, X_{n+1})|^2 \left| \bar{\theta}^{\lambda}_n \right.\right] \\
&\leq | \bar{\theta}^{\lambda}_n|^4+\mathbb{E}\left[6\lambda^2(t-n)^2| \bar{\theta}^{\lambda}_n|^2|H(\bar{\theta}^{\lambda}_n, X_{n+1})|^2 - 4\lambda (t-n) \left\langle  \bar{\theta}^{\lambda}_n, H(\bar{\theta}^{\lambda}_n, X_{n+1}) \right\rangle | \bar{\theta}^{\lambda}_n|^2 \right. \\
&\quad \left. -4\lambda^3 (t-n)^3|H(\bar{\theta}^{\lambda}_n, X_{n+1})|^2 \left\langle  \bar{\theta}^{\lambda}_n, H(\bar{\theta}^{\lambda}_n, X_{n+1}) \right\rangle +\lambda^4(t-n)^4|H(\bar{\theta}^{\lambda}_n, X_{n+1})|^4   \left| \bar{\theta}^{\lambda}_n \right.\right].
\end{align*}
By Remark \ref{rem:BoundsOnH}, the independence of $X_{n+1}$ and $\bar{\theta}^{\lambda}_n$, and by using $(u+v+\nu)^s  \leq 2^{s-1}(u+v)^s+2^{s-1}\nu^s \leq 2^{2s-2}(u^s+v^s)+2^{s-1}\nu^s$ for $u,v, \nu \geq 0, s \geq 1$, it follows that, for $q\geq 1$,
\begin{align}\label{Hest}
\begin{split}
\mathbb{E}\left[|H(\bar{\theta}^{\lambda}_n, X_{n+1})|^q	\left| \bar{\theta}^{\lambda}_n \right.\right]
&\leq 2^{q-1}L_1^q\mathbb{E}\left[\eta^q(X_0)\right]|\bar{\theta}^{\lambda}_n|^q +2^{2q-2}L_2^q\mathbb{E}\left[|\bar{\eta}^q(X_0)\right] +2^{2q-2}H_\star^q.
\end{split}
\end{align}
By using Assumption \ref{assum:dissipativity} and by taking $q = 2, 3, 4$ in \eqref{Hest}, one obtains
\begin{align*}
\mathbb{E}\left[|\Delta_{n,t}|^4\left| \bar{\theta}^{\lambda}_n \right.\right]
&\leq  (1- 4a\lambda (t-n) )| \bar{\theta}^{\lambda}_n|^4 +4b\lambda (t-n) | \bar{\theta}^{\lambda}_n|^2 \\
&\quad +12\lambda^2(t-n)^2L_1^2\mathbb{E}\left[\eta^2(X_0)\right]| \bar{\theta}^{\lambda}_n|^4 + 16\lambda^3(t-n)^3L_1^3\mathbb{E}\left[\eta^3(X_0)\right]| \bar{\theta}^{\lambda}_n|^4\\
&\quad +8\lambda^4(t-n)^4L_1^4\mathbb{E}\left[\eta^4(X_0)\right]| \bar{\theta}^{\lambda}_n|^4\\
&\quad +24\lambda^2(t-n)^2\left(L_2^2\mathbb{E}\left[\bar{\eta}^2(X_0)\right]+H_\star^2\right)| \bar{\theta}^{\lambda}_n|^2\\
&\quad +64\lambda^3(t-n)^3\left(L_2^3\mathbb{E}\left[\bar{\eta}^3(X_0)\right]+H_\star^3\right)| \bar{\theta}^{\lambda}_n|\\
&\quad +64\lambda^4(t-n)^4\left(L_2^4\mathbb{E}\left[\bar{\eta}^4(X_0)\right]+H_\star^4\right)
\end{align*}
which implies, by using $\lambda < \lambda_{\max}$
\begin{align}\label{ubM}
\begin{split}
\mathbb{E}\left[|\Delta_{n,t}|^4\left| \bar{\theta}^{\lambda}_n \right.\right]
&\leq  (1- 3a\lambda (t-n) )| \bar{\theta}^{\lambda}_n|^4 +4b\lambda (t-n)| \bar{\theta}^{\lambda}_n|^2  \\
&\quad +24\lambda^2(t-n)^2\left(L_2^2\mathbb{E}\left[\bar{\eta}^2(X_0)\right]+H_\star^2\right)| \bar{\theta}^{\lambda}_n|^2\\
&\quad +64\lambda^3(t-n)^3\left(L_2^3\mathbb{E}\left[\bar{\eta}^3(X_0)\right]+H_\star^3\right)| \bar{\theta}^{\lambda}_n|\\
&\quad +64\lambda^4(t-n)^4\left(L_2^4\mathbb{E}\left[\bar{\eta}^4(X_0)\right]+H_\star^4\right).
\end{split}
\end{align}
For $|\bar{\theta}^{\lambda}_n|>(8ba^{-1}+48a^{-1}\lambda_{\max}(L_2^2\mathbb{E}\left[\bar{\eta}^2(X_0)\right]+H_\star^2))^{1/2}$, we have
\begin{equation}\label{ubM1}
- \frac{a\lambda (t-n) }{2}| \bar{\theta}^{\lambda}_n|^4 +4b\lambda (t-n)| \bar{\theta}^{\lambda}_n|^2+24\lambda^2(t-n)^2\left(L_2^2\mathbb{E}\left[\bar{\eta}^2(X_0)\right]+H_\star^2\right)| \bar{\theta}^{\lambda}_n|^2<0.
\end{equation}
Similarly, for $| \bar{\theta}^{\lambda}_n|>(128a^{-1}\lambda_{\max}^2(L_2^3\mathbb{E}\left[\bar{\eta}^3(X_0)\right]+H_\star^3))^{1/3}$
\begin{equation}\label{ubM2}
- \frac{a\lambda (t-n) }{2}| \bar{\theta}^{\lambda}_n|^4 +64\lambda^3(t-n)^3\left(L_2^3\mathbb{E}\left[\bar{\eta}^3(X_0)\right]+H_\star^3\right)| \bar{\theta}^{\lambda}_n|<0.
\end{equation}
Denote by
\begin{align}\label{estM}
\begin{split}
M &:= \max\{(8ba^{-1}+48a^{-1}\lambda_{\max}(L_2^2\mathbb{E}\left[\bar{\eta}^2(X_0)\right]+H_\star^2))^{1/2},\\
	&\hspace{4em} (128a^{-1}\lambda_{\max}^2(L_2^3\mathbb{E}\left[\bar{\eta}^3(X_0)\right]+H_\star^3))^{1/3}\}.
\end{split}
\end{align}
Moreover, denote by $\mathsf{A}_{n,M} := \{\omega \in \Omega: |\bar{\theta}^{\lambda}_n(\omega)| >M \}$. Then, by substituting \eqref{ubM1}, \eqref{ubM2} into \eqref{ubM}, one obtains, 
\begin{align*}
\mathbb{E}\left[|\Delta_{n,t}|^4\mathbbm{1}_{\mathsf{A}_{n,M}}\left| \bar{\theta}^{\lambda}_n \right.\right]
& \leq  (1- 2a\lambda (t-n) )| \bar{\theta}^{\lambda}_n|^4  \mathbbm{1}_{\mathsf{A}_{n,M}} \\
&\quad +64\lambda^4(t-n)^4\left(L_2^4\mathbb{E}\left[\bar{\eta}^4(X_0)\right]+H_\star^4\right)\mathbbm{1}_{\mathsf{A}_{n,M}}.
\end{align*}
Similarly, we have
\begin{align*}
\mathbb{E}\left[|\Delta_{n,t}|^4\mathbbm{1}_{\mathsf{A}_{n,M}^{\mathsf{c}}} \left| \bar{\theta}^{\lambda}_n \right.\right]
&\leq  (1- 2a\lambda (t-n) )| \bar{\theta}^{\lambda}_n|^4 \mathbbm{1}_{\mathsf{A}_{n,M}^{\mathsf{c}}}+4b\lambda (t-n)M^2\mathbbm{1}_{\mathsf{A}_{n,M}^{\mathsf{c}}}  \\
&\quad +24\lambda^2(t-n)^2\left(L_2^2\mathbb{E}\left[\bar{\eta}^2(X_0)\right]+H_\star^2\right)M^2\mathbbm{1}_{\mathsf{A}_{n,M}^{\mathsf{c}}}\\
&\quad +64\lambda^3(t-n)^3\left(L_2^3\mathbb{E}\left[\bar{\eta}^3(X_0)\right]+H_\star^3\right)M\mathbbm{1}_{\mathsf{A}_{n,M}^{\mathsf{c}}}\\
&\quad +64\lambda^4(t-n)^4\left(L_2^4\mathbb{E}\left[\bar{\eta}^4(X_0)\right]+H_\star^4\right)\mathbbm{1}_{\mathsf{A}_{n,M}^{\mathsf{c}}}.
\end{align*}
Combining the two cases yields
\begin{equation}\label{deltaest4}
\mathbb{E}\left[|\Delta_{n,t}|^4\left| \bar{\theta}^{\lambda}_n \right.\right]\leq  (1- 2a\lambda (t-n) )| \bar{\theta}^{\lambda}_n|^4 + \lambda (t-n) c_2,
\end{equation}
where $
c_2 := 4bM^2 + 152(1+\lambda_{\max})^3 \left((1+L_2)^4\mathbb{E}\left[(1+\bar{\eta}(X_0))^4\right]+(1+H_\star)^4\right)(1+M)^2  
$
 with $M$ given in \eqref{estM}. Substituting \eqref{deltaest4} into \eqref{4thmoment}, one obtains
\begin{align*}
\mathbb{E}\left[|\bar{\theta}^{\lambda}_t|^4\left| \bar{\theta}^{\lambda}_n \right.\right]
& \leq  (1+a\lambda(t-n))(1- 2a\lambda (t-n) )| \bar{\theta}^{\lambda}_n|^4 \\
&\quad +(1+a\lambda(t-n)) \lambda (t-n) c_2 +12d^2\lambda^2 \beta^{-2}(t-n)^2 (1+9/(a\lambda(t-n)))\\
& \leq (1-  a\lambda (t-n) )| \bar{\theta}^{\lambda}_n|^4 +\lambda (t-n) c_3,
\end{align*}
where $c_3 	:=  (1+a\lambda_{\max})c_2+12d^2\beta^{-2}(\lambda_{\max}+9a^{-1})$.
Finally, for any $n \in \mathbb{N}, t \in (n, n+1]$, $0<\lambda \leq \lambda_{\max} $, one obtains,
\begin{align*}
\mathbb{E}\left[|\bar{\theta}^{\lambda}_t|^4 \right]
& \leq (1-  a\lambda (t-n) )\mathbb{E}\left[| \bar{\theta}^{\lambda}_n|^4\right] +\lambda (t-n) c_3\\
& \leq (1-a\lambda(t-n) ) (1-a \lambda ) \E\left[|\bar{\theta}^{\lambda}_{n-1}|^4 \right] +\lambda_{\max} c_3 +\lambda c_3\\
& \leq (1- a\lambda(t-n) ) (1-a\lambda  )^2 \E\left[|\bar{\theta}^{\lambda}_{n-2}|^4 \right]\\
&\quad +  \lambda_{\max} c_3 +\lambda c_3(1+(1-a\lambda  ))\\
&\leq \dots \\
&\leq  (1-a\lambda(t-n) )(1-a \lambda)^n \E\left[|\theta_0|^4\right]  +c_3(\lambda_{\max} +1/a),
\end{align*}
which completes the proof.
\end{proof}

\begin{proof}[\textbf{Proof of Lemma~\ref{zetaprocessmoment}}]\label{proof:zetaprocessmoment}
For any $p\geq 1$, application of Ito's lemma and taking expectation yields
\begin{align*}
\E[V_p(\bar{\zeta}_t^{\lambda,n})] = \E[V_p( \bar{\theta}^{\lambda}_{nT})] + \int_{nT}^t \E\left[\lambda \frac{\Delta V_p(\bar{\zeta}_s^{\lambda,n})}{\beta} - \lambda \langle h(\bar{\zeta}_s^{\lambda,n}), \nabla V_p(\bar{\zeta}_s^{\lambda,n}) \rangle\right] \rmd s.
\end{align*}
Differentiating both sides and using Lemma~\ref{lem:PreLimforDriftY}, we arrive at
\begin{align*}
\frac{\rmd}{\rmd t} \E[V_p(\bar{\zeta}_t^{\lambda,n})] 
&= \E\left[ \lambda \frac{\Delta V_p(\bar{\zeta}_t^{\lambda,n})}{\beta} - \lambda \langle h(\bar{\zeta}_t^{\lambda,n}), \nabla V_p(\bar{\zeta}_t^{\lambda,n}) \rangle\right] \\
&\leq -\lambda \bar{c}(p) \E[V_p(\bar{\zeta}_t^{\lambda,n})] + \lambda \tilde{c}(p),
\end{align*}
which yields
\begin{align*}
\E[V_p(\bar{\zeta}_t^{\lambda,n})] &\leq e^{-\lambda (t - nT) \bar{c}(p)} \E[V_p( \bar{\theta}^{\lambda}_{nT})] + \tilde{c}(p)/\bar{c}(p) \left( 1 - e^{-\lambda \bar{c}(p) (t - nT)} \right) \\
&\leq e^{-\lambda (t - nT) \bar{c}(p)} \E[V_p(\bar{\theta}^{\lambda}_{nT})] + \tilde{c}(p)/\bar{c}(p).
\end{align*}
Now for $p = 2$, by using Lemma \ref{lem:moment_SGLD_2p}, Corollary~\ref{corr:Moments} and Lemma \ref{lem:PreLimforDriftY}, we obtain
\begin{align*}
\E[V_2(\bar{\zeta}_t^{\lambda,n})] &\leq e^{-\lambda(t-nT) \bar{c}(2)} \E[V_2(\bar{\theta}^{\lambda}_{nT})] +\tilde{c}(2)/\bar{c}(2)\\
&\leq (1-a\lambda)^{nT} e^{-\lambda(t-nT) \bar{c}(2)} \mathbb{E}[ V_2(\theta_0)]+ \tilde{c}(2)/\bar{c}(2) +c_1 (\lambda_{\max}+a^{-1})+1\\
&\leq e^{-a\lambda  t/2}   \E[V_2(\theta_0)] +3 \mathrm{v}_2(\overline{M}_2)+c_1(\lambda_{\max}+a^{-1})+1,
\end{align*}
where the last inequality holds due to $0 \leq 1-z \leq e^{-z}$ for $z \geq 0$ and $\bar{c}(2) =a/2$. Similarly, for $p = 4$, one obtains
\begin{align*}
\E[V_4(\bar{\zeta}_t^{\lambda,n})] &\leq e^{-\lambda(t-nT) \bar{c}(4)} \E[V_4(\bar{\theta}^{\lambda}_{nT})] + \tilde{c}(4)/\bar{c}(4)\\
&\leq 2(1 - a\lambda)^{nT} e^{-\lambda(t-nT) \bar{c}(4)} \E[V_4(\theta_0)] + \tilde{c}(4)/\bar{c}(4)+2c_3 (\lambda_{\max}+a^{-1})+2\\
&\leq 2e^{-a\lambda  t}   \E[V_4(\theta_0)] +3 \mathrm{v}_4(\overline{M}_4)+2c_3 (\lambda_{\max}+a^{-1})+2,
\end{align*}
where the last inequality holds due to $0 \leq 1-z \leq e^{-z}$ for $z \geq 0$ and $\bar{c}(4) =a$.
\end{proof}

\begin{proof}[\textbf{Proof of Lemma~\ref{convergencepart1}}]\label{proof:convergencepart1}

To handle the first term in \eqref{decomposition}, we start by establishing an upper bound in Wasserstein-2 distance and the statment follows by noticing $W_1\leq W_2$. By employing synchronous coupling, using \eqref{SGLDprocess} and the definition of $\bar{\zeta}_t^{\lambda,n} $ in Definition \ref{zetaprocess}, one obtains, for any $t  \in (nT, (n+1)T]$,
\begin{align*}
\left| \bar{\zeta}_t^{\lambda,n} - \bar{\theta}^{\lambda}_t\right|
& \leq \lambda \left| \int_{nT}^t \left[ H(\bar{\theta}^{\lambda}_\floor{s}, X_{\ceil{s}}) - h(\bar{\zeta}_s^{\lambda,n}) \right] \rmd s \right|\\
&\leq \lambda \left| \int_{nT}^t \left[ H(\bar{\theta}^{\lambda}_\floor{s},X_{\ceil{s}}) - H(\bar{\zeta}_s^{\lambda,n},X_{\ceil{s}})\right] \rmd s \right| \\
&\quad+ \lambda \left| \int_{nT}^t \left[ h(\bar{\zeta}_s^{\lambda,n}) - H(\bar{\zeta}_s^{\lambda,n},X_{\ceil{s}})\right] \rmd s \right|\\
&\leq \lambda L_1 \int_{nT}^t \eta(X_\ceil{s}) \left| \bar{\theta}^{\lambda}_\floor{s} - \bar{\zeta}_s^{\lambda,n}\right| \rmd s   + \lambda \left| \int_{nT}^t \left[ h(\bar{\zeta}_s^{\lambda,n}) - H(\bar{\zeta}_s^{\lambda,n},X_{\ceil{s}})\right] \rmd s \right|,
\end{align*}
where the last inequality holds due to Assumption~\ref{loclip}.
Now taking squares of both sides, using $(a+b)^2 \leq 2a^2 + 2 b^2$ for $a, b >0$, and then taking expectations lead to
\begin{align*}
\E\left[\left| \bar{\zeta}_t^{\lambda,n} -\bar{\theta}^{\lambda}_t \right|^2\right]
&\leq 2 \lambda L_1^2 \int_{nT}^t \E\left[\eta^2(X_0)\right] \E\left[\left| \bar{\theta}^{\lambda}_\floor{s} - \bar{\zeta}_s^{\lambda,n} \right|^2\right] \rmd s \\
&\quad + 2 \lambda^2 \E\left[\left| \int_{nT}^t \left[ h(\bar{\zeta}_s^{\lambda,n}) - H(\bar{\zeta}_s^{\lambda,n},X_{\ceil{s}})\right] \rmd s \right|^2\right].
\end{align*}
where 
the expectation splits over terms in the first integral due to the independence of $X_{\ceil{s}}$ from the rest of the random variables. Using $\lambda T \leq 1$, Lemma \ref{onestepest} and $(a+b)^2 \leq 2 a^2 + 2 b^2$ once again, we obtain

\begin{align}\label{proof:ErrorSplit}
\E\left[\left| \bar{\zeta}_t^{\lambda,n} - \bar{\theta}^{\lambda}_t \right|^2\right]
&\leq 4 \lambda L_1^2 \E\left[\eta^2(X_0)\right] \int_{nT}^t \E\left[\left| \bar{\theta}^{\lambda}_\floor{s} -  \bar{\theta}^{\lambda}_s \right|^2\right] \rmd s \nonumber\\
&\quad + 4 \lambda  L_1^2 \E\left[\eta^2(X_0)\right] \int_{nT}^t \E\left[\left| \bar{\theta}^{\lambda}_s - \bar{\zeta}_s^{\lambda,n} \right|^2\right] \rmd s \nonumber\\
&\quad+ 2 \lambda^2 \E\left[\left| \int_{nT}^t \left[ h(\bar{\zeta}_s^{\lambda,n}) - H(\bar{\zeta}_s^{\lambda,n},X_{\ceil{s}})\right] \rmd s \right|^2\right] \nonumber\\
&\leq 4 \lambda L_1^2  \E\left[\eta^2(X_0)\right]  (e^{-a\lambda nT}\bar{\sigma}_Y \mathbb{E}[V_2(\theta_0)] + \tilde{\sigma}_Y) \nonumber\\
&\quad + 4 \lambda L_1^2 \E\left[\eta^2(X_0)\right] \int_{nT}^t \E\left[\left|  \bar{\theta}^{\lambda}_s - \bar{\zeta}_s^{\lambda,n} \right|^2\right] \rmd s \nonumber\\
&\quad+ 2 \lambda^2 \E\left[\left| \int_{nT}^t \left[ h(\bar{\zeta}_s^{\lambda,n}) - H(\bar{\zeta}_s^{\lambda,n},X_{\ceil{s}})\right] \rmd s \right|^2\right].
\end{align}
where $\bar{\sigma}_Y$ and $\tilde{\sigma}_Y$ are provided in \eqref{sigmaY}. Next, we bound the last term in \eqref{proof:ErrorSplit} by partitioning the last integral. Assume that $nT + K < t \leq nT + K + 1$ where $K + 1 \leq T, K \in \mathbb{N}$. Thus we can write
\begin{align*}
\left| \int_{nT}^t \left[ h(\bar{\zeta}_s^{\lambda,n}) - H(\bar{\zeta}_s^{\lambda,n},X_{\ceil{s}})\right] \rmd s \right| = \left| \sum_{k=1}^{K} I_k + R_K \right|
\end{align*}
where $I_k  := \int_{nT + (k-1)}^{nT + k} [h(\bar{\zeta}_s^{\lambda,n}) - H(\bar{\zeta}_s^{\lambda,n},X_{nT+k})] \rmd s$, and $R_K := \int_{nT+K}^{t} [h(\bar{\zeta}_s^{\lambda,n}) - H(\bar{\zeta}_s^{\lambda,n},X_{nT+K + 1})] \rmd s$.
Taking squares of both sides
\begin{align*}
\left| \sum_{k=1}^{K} I_k + R_K \right|^2 = \sum_{k=1}^K | I_k |^2 + 2 \sum_{k=2}^K \sum_{j = 1}^{k-1} \langle I_k, I_j \rangle +2 \sum_{k=1}^K \langle I_k, R_K \rangle + |R_K|^2,
\end{align*}
Finally, we take expectations of both sides. Define the filtration $\mathcal{H}_t = \mathcal{F}^{\lambda}_{\infty} \vee \mathcal{G}_{\lfloor t \rfloor}$. We first note that for any $k =2, \dots, K$, $j = 1, \dots, k-1$,
\begin{align*}
\E \left[\langle I_k, I_j \rangle\right]
 &= \E\left[ \E [\langle I_k, I_j \rangle | \mathcal{H}_{nT+j} ] \right], \\
&= \E\left[ \E \left[\left\langle \int_{nT + (k-1)}^{nT + k} [H(\bar{\zeta}_s^{\lambda,n},X_{nT + k}) - h(\bar{\zeta}_s^{\lambda,n})] \rmd s, \right.\right.\right.\\
&\hspace{5em} \left.\left.\left.\left.\int_{nT + (j-1)}^{nT + j} [H(\bar{\zeta}_s^{\lambda,n},X_{nT + j}) - h(\bar{\zeta}_s^{\lambda,n})] \rmd s \right\rangle \right|  \mathcal{H}_{nT+j} \right] \right], \\
& = \E\left[ \left\langle \int_{nT + (k-1)}^{nT + k} \E \left[\left. H(\bar{\zeta}_s^{\lambda,n},X_{nT + k}) - h(\bar{\zeta}_s^{\lambda,n})\right|  \mathcal{H}_{nT+j} \right]\rmd s, \right.\right. \\
& \hspace{5em} \left.\left.  \int_{nT + (j-1)}^{nT + j} [H(\bar{\zeta}_s^{\lambda,n},X_{nT + j}) - h(\bar{\zeta}_s^{\lambda,n})] \rmd s \right\rangle  \right] = 0.
\end{align*}
By the same argument $\E \langle I_k, R_K\rangle = 0$ for all $1 \leq k \leq K$. Therefore, the last term of \eqref{proof:ErrorSplit} is bounded as
\begin{align*}
2 \lambda^2 \E\left[ \left| \int_{nT}^t \left[ h(\bar{\zeta}_s^{\lambda,n}) - H(\bar{\zeta}_s^{\lambda,n},X_{\ceil{s}})\right] \rmd s \right|^2\right] &= 2 \lambda^2 \sum_{k=1}^K \E\left[ |I_k|^2\right] + 2 \lambda^2 \E\left[ |R_K|^2\right] \\
&\leq 4e^{-a\lambda nT/2} \lambda (\bar{\sigma}_Z\mathbb{E}[V_2(\theta_0)]+\tilde{\sigma}_Z),
\end{align*}
where the last inequality holds due to Lemma~\ref{lem:boundedVariance} and $\bar{\sigma}_Z$ and $\tilde{\sigma}_Z$ are provided in \eqref{sigmaZ}. Therefore, the bound \eqref{proof:ErrorSplit} becomes
\begin{align*}
\E\left[\left| \bar{\zeta}_t^{\lambda,n} - \bar{\theta}^{\lambda}_t \right|^2\right]
&\leq  4 \lambda  L_1^2\E\left[\eta^2(X_0)\right]\int_{nT}^t \E\left[\left|  \bar{\theta}^{\lambda}_s - \bar{\zeta}_s^{\lambda,n} \right|^2\right] \rmd s\\
&\quad +4e^{-a\lambda nT/2} \lambda (L_1^2 \E\left[\eta^2(X_0)\right]  \bar{\sigma}_Y + \bar{\sigma}_Z )\mathbb{E}[V_2(\theta_0)] \\
&\quad +4 \lambda ( L_1^2 \E\left[\eta^2(X_0)\right]\tilde{\sigma}_Y+\tilde{\sigma}_Z).
\end{align*}
Using Gr\"onwall's inequality leads
\begin{align*}
\E\left[\left| \bar{\zeta}_t^{\lambda,n} - \bar{\theta}^{\lambda}_t \right|^2\right]
&\leq \lambda e^{4  L_1^2 \E\left[\eta^2(X_0)\right]}\left[ 4 e^{-a\lambda nT/2} (L_1^2 \E\left[\eta^2(X_0)\right] \bar{\sigma}_Y +\bar{\sigma}_Z )\mathbb{E}[V_2(\theta_0)] \right.\\
&\hspace{8em} \left. +4   ( L_1^2 \E\left[\eta^2(X_0)\right] \tilde{\sigma}_Y+\tilde{\sigma}_Z)\right].
\end{align*}
which implies by $\lambda T \geq 1/2$,
\begin{equation}\label{vibd1}
W^2_2(\calL(\bar{\theta}^{\lambda}_t),\calL(\bar{\zeta}_t^{\lambda,n})) \leq \E\left| \bar{\zeta}_t^{\lambda,n} - \bar{\theta}^{\lambda}_t \right|^2  \leq  \lambda (e^{-an/4}\bar{C}_{2,1}\mathbb{E}[V_2(\theta_0)]  +\bar{C}_{2,2}) ,
\end{equation}
where
\begin{align}\label{barc2}
\begin{split}
&\bar{C}_{2,1} := 4 e^{4  L_1^2 \E\left[\eta^2(X_0)\right]}(L_1^2 \E\left[\eta^2(X_0)\right] \bar{\sigma}_Y +\bar{\sigma}_Z ), \\
& \bar{C}_{2,2} := 4  e^{4  L_1^2 \E\left[\eta^2(X_0)\right]} ( L_1^2 \E\left[\eta^2(X_0)\right] \tilde{\sigma}_Y+\tilde{\sigma}_Z)
\end{split}
\end{align}
with $ \bar{\sigma}_Y$, $ \tilde{\sigma}_Y$ provided in \eqref{sigmaY} and $\bar{\sigma}_Z$, $\tilde{\sigma}_Z$ given in \eqref{sigmaZ}.
\end{proof}

\begin{proof}[\textbf{Proof of Lemma~\ref{convergencepart2}}]\label{proof:convergencepart2}

To upper bound the second term $W_1(\mathcal{L}(\bar{\zeta}_t^{\lambda,n}),\calL(Z_t^\lambda))$ in \eqref{decomposition}, we adapt the proof from Lemma~3.18 in \cite{nonconvex}. Recall the definition of $w_{1,2}$ given in \eqref{w1p}, and the fact that $W_1(\mu, \nu) \leq w_{1,2}(\mu, \nu)$ for any $ \mu,\nu \in \mathcal{P}_{\, V_2}$. By Proposition \ref{contr}, one obtains, for any $t  \in (nT, (n+1)T]$,
\begin{align*}
W_1(\mathcal{L}(\bar{\zeta}_t^{\lambda,n}),\calL(Z_t^\lambda))  
&\leq \sum_{k=1}^n W_1(\calL(\bar{\zeta}_t^{\lambda,k}),\calL(\bar{\zeta}_t^{\lambda,k-1}))  \nonumber \\
&\leq \sum_{k=1}^n w_{1,2}(\calL(\zeta^{kT,\bar{\theta}^{\lambda}_{kT}, \lambda}_t ),\calL(\zeta^{kT,\bar{\zeta}_{kT}^{\lambda,k-1}, \lambda}_t)) \nonumber\\
&\leq \hat{c} \sum_{k=1}^n \exp(-\dot{c} (n-k)) w_{1,2}(\calL(\bar{\theta}^{\lambda}_{kT}),\calL(\bar{\zeta}_{kT}^{\lambda,k-1})), \nonumber 
\end{align*}
which implies, by using Cauchy-Schwarz inequality, Young's inequality, Lemma~\ref{convergencepart1}, Corollary \ref{corr:Moments} and Lemma \ref{zetaprocessmoment},
\begin{align*}
&W_1(\mathcal{L}(\bar{\zeta}_t^{\lambda,n}),\calL(Z_t^\lambda)) \\
&\leq \hat{c} \sum_{k=1}^n \exp(-\dot{c} (n-k))  W_2(\calL(\bar{\theta}^{\lambda}_{kT}),\calL(\bar{\zeta}_{kT}^{\lambda,k-1}))  \left[1 + \left\lbrace \E[V_4(\bar{\theta}^{\lambda}_{kT})]\right\rbrace^{1/2} \right.\\
&\hspace{18em} \left.+ \left\lbrace\E[V_4(\bar{\zeta}_{kT}^{\lambda,k-1})] \right\rbrace^{1/2}\right] \nonumber \\
&\leq (\sqrt{\lambda})^{-1}\hat{c} \sum_{k=1}^n \exp(-\dot{c} (n-k))  W^2_2(\calL(\bar{\theta}^{\lambda}_{kT}),\calL(\bar{\zeta}_{kT}^{\lambda,k-1})) \nonumber\\
&\quad +3\sqrt{\lambda}\hat{c} \sum_{k=1}^n \exp(-\dot{c} (n-k))  \left[1 + \E[V_4(\bar{\theta}^{\lambda}_{kT})] +\E[V_4(\bar{\zeta}_{kT}^{\lambda,k-1})] \right] \nonumber\\
& \leq \sqrt{\lambda} \hat{c} \sum_{k=1}^n \exp(-\dot{c} (n-k)) (e^{-a(k-1)/4}\bar{C}_{2,1}\mathbb{E}[V_2(\theta_0)]  +\bar{C}_{2,2}) \nonumber\\
&\quad +3\sqrt{\lambda}\hat{c} \sum_{k=1}^n \exp(-\dot{c} (n-k))  \left[1 + \E[V_4(\bar{\theta}^{\lambda}_{kT})] +\E[V_4(\bar{\zeta}_{kT}^{\lambda,k-1})] \right] \nonumber\\
& \leq  \sqrt{\lambda}e^{-\min\{\dot{c},a/4\}n}n\hat{c} (e^{\min\{\dot{c},a/4\}}\bar{C}_{2,1}\mathbb{E}[V_2(\theta_0)]  +12\mathbb{E}[V_4(\theta_0)] ) \nonumber\\
&\quad + \sqrt{\lambda}\frac{\hat{c} }{1 - \exp(-\dot{c})}(\bar{C}_{2,2}+12c_3(\lambda_{\max}+a^{-1})+9\mathrm{v}_4(\overline{M}_4)+15) \nonumber\\
&\leq \sqrt{\lambda}(e^{-\min\{\dot{c},a/4\}n/2}\bar{C}_{2,3}\mathbb{E}[V_4(\theta_0)] +\bar{C}_{2,4})\nonumber\\
&=\sqrt{\lambda}(e^{-\dot{c}n/2}\bar{C}_{2,3}\mathbb{E}[V_4(\theta_0)] +\bar{C}_{2,4}),
\end{align*}
where the last inequality holds by applying the inequality $e^{-\alpha n}(n+1) \leq 1+\alpha^{-1}$, for $\alpha>0$ with $\alpha = \min\{\dot{c},a/4\}/2$, and the last equality holds by noticing $\min\{\dot{c},a/4\} = \dot{c}$ with $\dot{c}$ given in \eqref{dotc}. The explicit expressions for the constants $\bar{C}_{2,3}, \bar{C}_{2,4}$ are given below:
\begin{align}\label{mainthmc2}
\begin{split}
\bar{C}_{2,3}&: =\hat{c}\left(1+\frac{2}{\dot{c}}\right) (e^{a/4}\bar{C}_{2,1}  +12)\\
\bar{C}_{2,4}& := \frac{\hat{c} }{1 - \exp(-\dot{c})}(\bar{C}_{2,2}+12c_3(\lambda_{\max}+a^{-1})+9\mathrm{v}_4(\overline{M}_4)+15)
\end{split}
\end{align}
with $\bar{C}_{2,1} $, $\bar{C}_{2,2}$ given in \eqref{barc2}, $\hat{c} $, $\dot{c}$ given in Lemma \ref{contractionconst}, $c_3$ given in \eqref{4thmomentconst}, and $\overline{M}_4$ given in Lemma \ref{lem:PreLimforDriftY}.
\end{proof}

\begin{proof}[\textbf{Proof of Corollary~\ref{convergencepart2w2}}]\label{proof:convergencepart2w2}
One notices that $W_2 \leq \sqrt{2w_{1,2}}$, then, by using similar arguments as in the proof of Lemma \ref{convergencepart2}, one obtains
\begin{align*}
&W_2(\mathcal{L}(\bar{\zeta}_t^{\lambda,n}),\calL(Z_t^\lambda))  \nonumber\\
&\leq \sum_{k=1}^n W_2(\calL(\bar{\zeta}_t^{\lambda,k}),\calL(\bar{\zeta}_t^{\lambda,k-1}))  \nonumber \\
&\leq \sum_{k=1}^n \sqrt{2}w^{1/2}_{1,2}(\calL(\zeta^{kT,\bar{\theta}^{\lambda}_{kT}, \lambda}_t ),\calL(\zeta^{kT,\bar{\zeta}_{kT}^{\lambda,k-1}, \lambda}_t)) \nonumber\\
&\leq \sqrt{2\hat{c}}\sum_{k=1}^n \exp(-\dot{c} (n-k)/2)  W^{1/2}_2(\calL(\bar{\theta}^{\lambda}_{kT}),\calL(\bar{\zeta}_{kT}^{\lambda,k-1}))  \left[1 + \left\lbrace \E[V_4(\bar{\theta}^{\lambda}_{kT})]\right\rbrace^{1/2} \right.\\
&\hspace{18em} \left.+ \left\lbrace\E[V_4(\bar{\zeta}_{kT}^{\lambda,k-1})] \right\rbrace^{1/2}\right]^{1/2} \nonumber \\
&\leq \lambda^{-1/4}\sqrt{2\hat{c}} \sum_{k=1}^n \exp(-\dot{c} (n-k)/2)  W_2(\calL(\bar{\theta}^{\lambda}_{kT}),\calL(\bar{\zeta}_{kT}^{\lambda,k-1})) \nonumber\\
&\quad + \lambda^{ 1/4}\sqrt{2\hat{c}} \sum_{k=1}^n \exp(-\dot{c} (n-k)/2) \left[1 + \left\lbrace \E[V_4(\bar{\theta}^{\lambda}_{kT})]\right\rbrace^{1/2} + \left\lbrace\E[V_4(\bar{\zeta}_{kT}^{\lambda,k-1})] \right\rbrace^{1/2}\right]\nonumber\\
& \leq  \sqrt{2\hat{c}}\lambda^{1/4}e^{-\min\{\dot{c},a/4\}n/2}n (e^{\min\{\dot{c},a/4\}/2}\bar{C}^{1/2}_{2,1}\mathbb{E}^{1/2}[V_2(\theta_0)]  +2\sqrt{2}\mathbb{E}^{1/2}[V_4(\theta_0)] ) \nonumber\\
&\quad + \sqrt{2\hat{c}}\lambda^{1/4}\frac{1 }{1 - \exp(-\dot{c}/2)}(\bar{C}^{1/2}_{2,2}+2\sqrt{2c_3}(\lambda_{\max}+a^{-1})^{1/2}+\sqrt{3}\mathrm{v}^{1/2}_4(\overline{M}_4)+\sqrt{15}) \nonumber\\
&\leq \lambda^{1/4}(e^{-\min\{\dot{c},a/4\}n/4}\bar{C}^*_{2,3}\mathbb{E}^{1/2}[V_4(\theta_0)] +\bar{C}^*_{2,4})\nonumber\\
&= \lambda^{1/4}(e^{-\dot{c}n/4}\bar{C}^*_{2,3}\mathbb{E}^{1/2}[V_4(\theta_0)] +\bar{C}^*_{2,4}),
\end{align*}
where
\begin{align}\label{mainthmcstar2}
\begin{split}
\bar{C}^*_{2,3}&: =\sqrt{2\hat{c}}\left(1+4/\dot{c}\right) (e^{a/8}\bar{C}^{1/2}_{2,1}  +2\sqrt{2})\\
\bar{C}^*_{2,4}&: = \frac{\sqrt{2\hat{c}}}{1 - \exp(-\dot{c}/2)}(\bar{C}^{1/2}_{2,2}+2\sqrt{2c_3}(\lambda_{\max}+a^{-1})^{1/2}+\sqrt{3}\mathrm{v}^{1/2}_4(\overline{M}_4)+\sqrt{15}),
\end{split}
\end{align}
with $\bar{C}_{2,1} $, $\bar{C}_{2,2}$ given in \eqref{barc2}, $\hat{c} $, $\dot{c}$ given in Lemma \ref{contractionconst}, $c_3$ given in \eqref{4thmomentconst}, and $\overline{M}_4$ given in Lemma \ref{lem:PreLimforDriftY}. This completes the proof.
\end{proof}

\begin{proof}[\textbf{Proof of Lemma~\ref{convergencepart3}}]\label{proof:convergencepart3}

By using Lemma \ref{convergencepart1} and \ref{convergencepart2}, one obtains
\begin{align*}
W_1(\mathcal{L}(\bar{\theta}^{\lambda}_t),\mathcal{L}(Z^{\lambda}_t))
&\leq W_1(\calL(\bar{\theta}^{\lambda}_t),\calL(\bar{\zeta}_t^{\lambda,n})) +W_1(\mathcal{L}(\bar{\zeta}_t^{\lambda,n}),\calL(Z_t^\lambda)) \\
& \leq  \sqrt{\lambda} (e^{-an /8}\bar{C}_{2,1}^{1/2}\mathbb{E}^{1/2}[V_2(\theta_0)]  +\bar{C}_{2,2}^{1/2}) \\
&\quad +\sqrt{\lambda}(e^{-\dot{c}n/2}\bar{C}_{2,3}\mathbb{E}[V_4(\theta_0)] +\bar{C}_{2,4})\\
&\leq  (\bar{C}_{2,1}^{1/2} +\bar{C}_{2,2}^{1/2}+ \bar{C}_{2,3}+\bar{C}_{2,4})\sqrt{\lambda}(e^{-\dot{c}n/2}\mathbb{E}[V_4(\theta_0)] +1).
\end{align*}
\end{proof}

\begin{proof}[\textbf{Proof of Lemma~\ref{contractionconst}}]\label{proof:contractionconst}

To obtain the contraction constant $\dot{c}$, we apply the arguments in the proof of \cite[Theorem 2.2]{eberle} to SDE \eqref{sde}. More precisely, we replace $h(r)$ in \cite[Eqn. (5.14)]{eberle} by
\begin{equation}\label{cchr}
h(r): = \frac{\beta}{4}\int_0^r s\kappa \, \rmd s +2Q(\epsilon)r,
\end{equation}
where $ \kappa = L_1\mathbb{E}[\eta(X_0)]$ and $Q(\epsilon)$ are given in \cite[Eqn. (2.24)]{eberle}, and replace \cite[Eqn. (2.25)]{eberle} by $(4 \tilde{c}(2) \epsilon)^{-1} \geq \frac{\beta}{2}\int_0^{R_1}\int_0^s \exp\left( \frac{\beta}{4}\int_r^s u\kappa \, \rmd u +2Q(\epsilon)(s-r)\right)\, \rmd r \,\rmd s$. Then, following the proof of \cite[Theorem 2.2]{eberle}, one can derive the expressions for $\dot{c}$: $\dot{c}:=\min\{\phi, \bar{c}(2), 4\tilde{c}(2) \epsilon\bar{c}(2)\}/2$, where $\bar{c}(2)= a/2$, $ \tilde{c}(2) = (3/2) a  \mathrm{v}_2(\overline{M}_2)$ with $\overline{M}_2$ given in Lemma~\ref{lem:PreLimforDriftY}, $\phi$ is given by
$
\phi^{-1} := \int_0^{R_2} \int_0^s\exp\left(\frac{\beta}{4}\int_r^s u \kappa \, \rmd u+2Q(\epsilon) (s-r)\right)\,\rmd r\, \rmd s
$ 
with $R_2$ given in \cite[Eqn. (2.29)]{eberle}, and $ \epsilon \in (0,1]$ is required to satisfy $\epsilon^{-1} \geq 2\beta \tilde{c}(2) \int_0^{R_1} \int_0^s\exp\left(\frac{\beta}{4}\int_r^s u \kappa \, \rmd u+2Q(\epsilon) (s-r)\right)\,\rmd r\, \rmd s$ with $R_1$ given in \cite[Eqn. (2.29)]{eberle}. To simply the expressions for $\phi$ and $\epsilon$, we follow the proof of \cite[Lemma~3.24]{nonconvex}, and thus \eqref{dotc}, \eqref{simplifiedphi}, \eqref{simplifiedep} can be obtained. 

To obtain an explicit expression for $\hat{c}$, one first notes that, by using \eqref{cchr}, \cite[Eqn. (5.4)]{eberle} becomes: for any $r \in [0,R_2]$, $r\exp(-\beta \kappa R_2^2/8-2Q(\epsilon) R_2) \leq \Phi(r) \leq 2f(r) \leq 2 \Phi(r) \leq 2r$. Then, in view of \cite[Eqn. (60)]{nonconvex}, and by applying the same arguments as in the proof of \cite[Lemma~3.24]{nonconvex}, one obtains $C_9 := C_{11}/C_{10} \leq \hat{c}: = 2(1+\overline{R}_2)\exp(\beta K_1\overline{R}_2^2/8+2\overline{R}_2)/\epsilon$, where $\overline{R}_2  =\bar{b}:=2\sqrt{4\tilde{c}(2)(1+\bar{c}(2))/\bar{c}(2)-1}$, $K_1 :=  L_1\E[\eta(X_0)]$, and $\epsilon$ is given in \eqref{simplifiedep}.
\end{proof}

\newpage

\section{Table of constant}\label{sec:consttable}
 
\begin{table}[h]
\tiny
\begin{center}
\begin{minipage}{\textwidth}
\caption{Analytic expressions of constants}
\label{tab:constantsexp}
\begin{tabular*}{1.05\textwidth}{@{\extracolsep{\fill}}ccl@{\extracolsep{\fill}}}
\toprule
\multicolumn{2}{c}{Constant}  &Full expression\\ 
\midrule
	\multirow{3}{*}{Lemma \ref{lem:PreLimforDriftY}}	&$\overline{M}_p$ 					& $\sqrt{1/3 + 4b/(3a) + 4d/(3a\beta) + 4(p-2)/(3a\beta)}$ \\\cline{2-3}
    																	&$\bar{c}(p)$							&$ap/4$  \\\cline{2-3}
      																	&$\tilde{c}(p)$							&$(3/4) a p \mathrm{v}_p(\overline{M}_p)$\\ \hline
	\multirow{4}{*}{Lemma \ref{convergencepart1}}		&$\bar{C}_{2,1} $						& $4 e^{4  L_1^2 \E\left[\eta^2(X_0)\right]}\Big(2\lambda_{\max}L_1^4\big( \E\left[\eta^2(X_0)\right] \big)^2  +8L_2^2\E[(\eta(X_0)+\eta(\E[X_0]))^2|X_0 - \E[X_0]|^2] \Big)$\\ \cline{2-3}
      																	&\multirow{3}{*}{$\tilde{C}_{2,2}$} & $4  e^{4  L_1^2 \E\left[\eta^2(X_0)\right]} \Big(  2\lambda_{\max}L_1^4  \big( \mathbb{E}[\eta^2(X_0)]\big)^2c_1(\lambda_{\max}+a^{-1})\Big.$\\ 
      																	&											&$+4\lambda_{\max}L_1^2L_2^2\mathbb{E}[\eta^2(X_0)]\mathbb{E}[\bar{\eta}^2(X_0)] +4\lambda_{\max} H_\star^2L_1^2 \E\left[\eta^2(X_0)\right]+2d\beta^{-1}L_1^2 \E\left[\eta^2(X_0)\right]$\\
      																	&											&$+\Big. 8L_2^2\E[(\eta(X_0)+\eta(\E[X_0]))^2|X_0 - \E[X_0]|^2](3\mathrm{v}_2(\overline{M}_2)+c_1(\lambda_{\max}+a^{-1})+1)\Big)$\\\hline
	\multirow{2}{*}{Lemma \ref{convergencepart2}}		& $\bar{C}_{2,3}$ 						&$\hat{c}\left(1+\frac{2}{\dot{c}}\right) (e^{a/4}\bar{C}_{2,1}  +12)$\\ \cline{2-3}
         																&$\bar{C}_{2,4}$ 						&  $\frac{\hat{c} }{1 - \exp(-\dot{c})}(\bar{C}_{2,2}+12c_3(\lambda_{\max}+a^{-1})+9\mathrm{v}_4(\overline{M}_4)+15)$ \\\hline
  \multirow{2}{*}{Corollary \ref{convergencepart2w2}}	&$\bar{C}^*_{2,3}$					& $\sqrt{2\hat{c}}\left(1+\frac{4}{\dot{c}}\right) (e^{a/8}\bar{C}^{1/2}_{2,1}  +2\sqrt{2})$ \\\cline{2-3}
        																 &$\bar{C}^*_{2,4}$ 					&$ \frac{\sqrt{2\hat{c}}}{1 - \exp(-\dot{c}/2)}(\bar{C}^{1/2}_{2,2}+2\sqrt{2c_3}(\lambda_{\max}+a^{-1})^{1/2}+\sqrt{3}\mathrm{v}^{1/2}_4(\overline{M}_4)+\sqrt{15})$\\ \hline
	\multirow{3}{*}{Theorem \ref{main}}					&$ C_1 $									& $2e^{\dot{c}/2} \left[ (\lambda_{\max}^{1/2}(\bar{C}_{2,1}^{1/2} +\bar{C}_{2,2}^{1/2}+ \bar{C}_{2,3}+\bar{C}_{2,4})+\hat{c})+\hat{c}\left(1+ \int_{\mathbb{R}^d}V_2(\theta)\pi_{\beta}(d\theta)\right)\right]$ \\\cline{2-3}
																	&$ C_2 $									&$\bar{C}_{2,1}^{1/2} +\bar{C}_{2,2}^{1/2}$\\\cline{2-3}
																	&$ C_3$									&$\bar{C}_{2,3}+\bar{C}_{2,4} $\\\hline
	\multirow{3}{*}{Corollary \ref{cw2}}					&$ C_4 $									&$2 \left(\lambda_{\max}^{1/2}(\bar{C}_{2,1}^{1/2} + \bar{C}_{2,2}^{1/2} )+\lambda_{\max}^{1/4}(\bar{C}^*_{2,3} + \bar{C}^*_{2,4})+\sqrt{2}\hat{c}^{1/2}\right)$   \\\cline{2-3}
																	&$ C_5 $									&$ \lambda_{\max}^{1/4}\bar{C}_{2,1}^{1/2} +\lambda_{\max}^{1/4}\bar{C}_{2,2}^{1/2}$\\\cline{2-3}
																	&$ C_6 $									&$\bar{C}^*_{2,3} + \bar{C}^*_{2,4}$\\\hline
	\multirow{3}{*}{Corollary \ref{eer}}						&$C^{\sharp}_1 $						& $ C_4\left(L_1 \mathbb{E}[\eta(X_0)](\mathbb{E}[|\theta_0|^2]+c_1(\lambda_{\max}+a^{-1})) +L_2\mathbb{E}[\bar{\eta}(X_0)]+H_{\star}\right)\E[|\theta_{0}|^{4}+1]$  \\\cline{2-3}
																	&$C^{\sharp}_2$						&$(C_5+C_6)\left(L_1 \mathbb{E}[\eta(X_0)](\mathbb{E}[|\theta_0|^2]+c_1(\lambda_{\max}+a^{-1})) +L_2\mathbb{E}[\bar{\eta}(X_0)]+H_{\star}\right)$\\\cline{2-3}
																	&$C^{\sharp}_3 $						&$\frac{d}{2\beta}\log\left(\frac{e L_1\mathbb{E}[\eta(X_0)]}{a}\left(\frac{b\beta}{d}+1\right)\right)$\\ 
          \bottomrule
    \end{tabular*}
\end{minipage}
\end{center}
\end{table}

\begin{table}[h!]
\tiny
\begin{center}
\begin{minipage}{1.0\textwidth}
\renewcommand{\thempfootnote}{\arabic{mpfootnote}}
\caption{ Constants in Lemma \ref{lem:moment_SGLD_2p} and Lemma \ref{contractionconst}, and their dependency on key parameters}\label{tab:constantskeydep}
\begin{tabular*}{1.05\textwidth}{@{\extracolsep{\fill}}cccc@{\extracolsep{\fill}}}
    \toprule
     \multicolumn{1}{c}{Constant}	&  \multicolumn{3}{c}{Key parameters}   \\
    \midrule
    \phantom{Constant} 			&$d$							&$\beta$							& Moments of $X_0$  \\\hline
         $c_1$ 							& $O(1+d/\beta)$ 			& $O(1+d/\beta)$				&$O (\E[(1+|X_0|)\eta(X_0)])$  \\\hline
          $c_3$							&$O(1+(d/\beta)^2)$ 		& $O(1+(d/\beta)^2)$			&$O (\E^{3/2}[(1+|X_0|)^4\eta^4(X_0)])$  \\\hline
    $\dot{c}$							& \multicolumn{3}{c}{$ \left(\frac{32\sqrt{\pi}(1+a^2)(1+\beta)}{a^2\sqrt{\beta}}\left(1+\frac{1}{\sqrt{L_1\E[\eta(X_0)]}}\right)e^{\left(8C^\star(a,b)(1+\beta L_1\E[\eta(X_0)])(1+\frac{d}{\beta})+\frac{16}{\beta L_1\E[\eta(X_0)] }\right)}\right)^{-1}$\footnotemark[1]} \\\hline
       $\hat{c}$							& \multicolumn{3}{c}{$O\left(\sqrt{\frac{\beta}{L_1\E[\eta(X_0)]}}(1+\frac{d}{\beta})^2e^{\left(12C^\star(a,b)(1+\beta L_1\E[\eta(X_0)])(1+\frac{d}{\beta})+\frac{16}{\beta L_1\E[\eta(X_0)] }\right)}\right)$\footnotemark[1]}  \\
       \bottomrule
    \end{tabular*}
    \footnotetext[1]{$C^\star(a,b)  := (1+2/a) (1+a+b)$.}
\end{minipage}
\end{center}
\end{table}

\newpage

\bibliographystyle{plainnat}
\bibliography{biblio}


\end{document}